\newtheorem{theorem}{Theorem}[section]
\newtheorem{lemma}[theorem]{Lemma}
\newtheorem{proposition}[theorem]{Proposition}
\theoremstyle{remark}
\theoremstyle{remark}
\newtheorem{remark}[theorem]{Remark}
\newcommand{\eps}{\varepsilon}
\newcommand{\old}[1]{{}}
\newcommand{{\pe}}  {\partial_e}
\newcommand {{\lodd}} {{\mathcal J}}
\newcommand{{\inrad}} {{\rm inrad}}
\newcommand {{\cent}} {{w_0}}
\newcommand {{\eb}}  {{\bf e}}
\newcommand{{\niceset}}  {{\mathcal X}}
\newcommand {{\nicesettwo}} {\tilde{\niceset}}
\newcommand {{\sprob}}{\mathcal P}
\newcommand {{\smallbit}}{{\mathcal S}}
\newcommand {{\Prob}}{{\mathbb P}}
\newcommand {{\eset}}  {{\emptyset}}
\newcommand {{\Es}}  {{\rm Es}}
\newcommand {{\Cp}} {{\rm cap}}
\newcommand {{\p}}  {{\partial}}
\newcommand {{\hm}}  {{\rm hm}}
\newcommand {{\LE}}  {{\mathbf{LE}}}
\newcommand {{\saws}}  {{\mathcal K}}
\numberwithin{equation}{section}
\theoremstyle{plain}
\begin{document}

\begin{frontmatter}
\title{Maximum of the Ginzburg-Landau fields}
\runtitle{Maximum of the Ginzburg-Landau fields}

\begin{aug}
\author{\fnms{David} \snm{Belius}\ead[label=e1]{ david.belius@unibas.ch}},

\and
\author{\fnms{Wei} \snm{Wu} \ead[label=e3]{w.wu.9@warwick.ac.uk}
}

\runauthor{D. Belius and W. Wu}


\address{David Belius\\
SNF Assistant Professor\\
Department of Mathematics and Computer Science\\ University of Basel\\ Basel, Switzerland\\
\printead{e1}
}

\address{Wei Wu (corresponding author)\\
Assistant Professor\\
Statistics Department\\
University of Warwick\\
Coventry\\
CV4 7AL, UK\\
\printead{e3}
}
\end{aug}

\begin{abstract}
We study two dimensional massless field in a box with potential $V\left(
\nabla \phi \left( \cdot \right) \right) $ and zero boundary condition,
where $V$ is any symmetric and uniformly convex function. Naddaf-Spencer 
\cite{NS} and Miller \cite{M} proved that the rescaled macroscopic averages
of this field converge to a continuum Gaussian free field. In this paper we
prove that the distribution of local marginal $\phi \left( x\right) $, for
any $x$ in the bulk, has a Gaussian tail. We further characterize the
leading order of the maximum and the dimension of high points of this field,
thus generalizing the results of Bolthausen-Deuschel-Giacomin \cite{BDG}\
and Daviaud \cite{Dav}\ for the discrete Gaussian free field.
\end{abstract}

\begin{keyword}[class=MSC]
\kwd{60G50, 60K35}
\end{keyword}
\begin{keyword}
\kwd{\LaTeXe}
\end{keyword}

\end{frontmatter}

\section{ Introduction}

\subsection{Model}

This paper studies the extreme values of certain two-dimensional lattice
gradient Gibbs measures (also known as the Ginzburg-Landau field). Take a
nearest neighbor potential $V\in C^{2}\left( \mathbb{R}\right) $ that
satisfies:%
\begin{gather}
\text{ }V\left( x\right) =V\left( -x\right) ,  \label{a1} \\
\text{ }0<c_{-}\leq V^{\prime \prime }\left( x\right) \leq c_{+}<\infty 
\text{,}  \label{a2}
\end{gather}%
{where }$c_{-},c_{+}${\ are positive constants.}

Let $D_{N}:=\left[ -N,N\right] ^{2}\cap \mathbb{Z}^{2}$ and $\partial D_{N}$
consist of the vertices in $D_{N}$ that are connected to $\mathbb{Z}%
^{2}\setminus D_{N}$ by some edge. The Ginzburg-Landau Gibbs measure on $%
D_{N}$ with zero boundary condition is given by 
\begin{equation}
d\mu _{N}=Z_{N}^{-1}\exp \left[ -\sum_{x\in D_{N}}\sum_{i=1}^{2}V\left(
\nabla _{i}\phi \left( x\right) \right) \right] \prod_{x\in D_{N}\backslash
\partial D_{N}}d\phi \left( x\right) \prod_{x\in \partial D_{N}}\delta
_{0}\left( \phi \left( x\right) \right) ,  \label{GL}
\end{equation}%
where $\nabla _{i}\phi \left( x\right) =\phi \left( x+e_{i}\right) -\phi
\left( x\right) $ {for $e_{1}=(1,0)$ and $e_{2}=(0,1)$}, 
\begin{equation*}
\delta _{0}\left( y\right) =\left\{ 
\begin{array}{cc}
1 & y=0 \\ 
0 & \text{else}%
\end{array}%
\right. ,
\end{equation*}%
and $Z_{N}$ is the normalizing constant such that $\mu _{N}$ is a
probability measure. We denote by $\mathbb{E}^{D_{N},0}$ and Var$^{D_{N},0}$
the expectation and variance with respect to the measure $\mu _{N}$. The
Ginzburg-Landau model is a natural generalization of the discrete Gaussian
free field (DGFF, corresponding to the case $V\left( x\right) =x^{2}/2$). It
is no longer Gaussian in general, but still log-correlated in two
dimension. In fact, one can prove that the limit 
\begin{equation}
\lim_{N\rightarrow \infty }\frac{\text{Var}^{D_{N},0}\phi \left( 0\right) }{%
\log N}=g\text{, \ \ for some }g=g\left( V\right) >0  \label{var}
\end{equation}%
exists; this follows, for example, by following the proof of Theorem \ref%
{thm: tail bound} in this paper. The constant $g=g\left( V\right) $ is known
as the \textbf{effective stiffness} of the random surface model.

\subsection{Results}

Our main result concerns the maximum of the Ginzburg-Landau field $\phi $ in 
$D_{N}$. For potential $V\left( \cdot \right) $ satisfying (\ref{a1}) and (%
\ref{a2}), the well-known Brascamp-Lieb inequalities (Lemma \ref{BL}) imply that with high probability, $\frac{\sup_{v\in D_{N}}\phi
\left( v\right) }{\log N}$ is uniformly bounded above by a constant
depending only on $c_{-}$ (see \cite{DG} where a constant lower bound was
also obtained, and Remark \ref{rem: BL upper bound} below). We prove that
this random variable in fact satisfies a law of large numbers, with more
precise tail bounds given by (\ref{ub}) and (\ref{lb}) below.

\begin{theorem}
\label{main}Let $\phi $ be sampled from the Gibbs measure (\ref{GL}). Assume
the potential $V\left( \cdot \right) $ satisfies (\ref{a1}) and (\ref{a2}).
Then there is a constant{\ $g=g\left( V\right) $}, such that%
\begin{equation}
\frac{\sup_{v\in D_{N}}\phi \left( v\right) }{\log N}\rightarrow 2\sqrt{g}%
\text{ in }L^{2}\text{.}  \label{eq:max}
\end{equation}
\end{theorem}

\begin{remark}
The explicit dependence of $g(V)$ on $V$ is not known. The same constant
also appears in the covariance of the continuum Gaussian Free Field that
emerges as the scaling limit of the measure (\ref{GL}), see \cite{NS},\cite%
{GOS}. One can give a variational characterization of $g(V)$ (see, e.g., 
\cite{GOS},\cite{AW}).
\end{remark}

Theorem \ref{main} is known for the discrete Gaussian Free Field (see \cite%
{BDG}), but not for any other Ginzburg-Landau fields. We will summarize
related results in Section \ref{hist} below. The upper bound of Theorem \ref%
{main} will be proved in Section \ref{sec: ub} and the lower bound in
Section \ref{LB}.

Our next result studies the fractal structure of the sets where the
Ginzburg-Landau field $\phi $ is unusually high. We say that $v\in D_{N}$ is
an $\eta $-high point for the Ginzburg-Landau field if $\phi \left( v\right)
\geq 2\sqrt{g}\eta \log N$. The following theorem generalizes the dimension
of the high points for Gaussian free field, obtained by Daviaud \cite{Dav}.

\begin{theorem}
\label{thm: high points} Denote by $\mathcal{H}_{N}\left( \eta \right)
=\left\{ v\in D_{N}:\phi \left( v\right) \geq 2\sqrt{g}\eta \log N\right\} $
the set of $\eta $-high points. Then for any $\eta \in \left( 0,1\right) $,%
\begin{equation}
\frac{\log \left\vert \mathcal{H}_{N}\left( \eta \right) \right\vert }{\log N%
}\rightarrow 2\left( 1-\eta ^{2}\right)  \label{eq: high points}
\end{equation}%
in probability.
\end{theorem}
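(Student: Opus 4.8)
The strategy is the standard two-part argument for fractal dimensions of high points, adapted from Daviaud's treatment of the DGFF, with Gaussian control replaced by the Brascamp-Lieb inequality and the Gaussian-tail estimates established earlier (Lemma \ref{lem: BL tail bound}, Proposition \ref{lem: tail bound}, and the variance asymptotics (\ref{var})). One must prove a matching upper and lower bound for $\log|\mathcal{H}_N(\eta)|/\log N$.

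For the \textbf{upper bound}, I would use a first-moment (union) bound. By (\ref{var}) and Proposition \ref{lem: tail bound}, for any fixed $x$ in the bulk, $\mathbb{P}(\phi(x)\geq 2\sqrt{g}\,\eta\log N)\leq N^{-2\eta^2+o(1)}$; near-boundary points only have smaller variance, hence an even smaller probability by Brascamp-Lieb. Summing over the $\asymp N^2$ sites of $D_N$ gives $\mathbb{E}|\mathcal{H}_N(\eta)|\leq N^{2(1-\eta^2)+o(1)}$, and Markov's inequality yields $|\mathcal{H}_N(\eta)|\leq N^{2(1-\eta^2)+\varepsilon}$ with high probability.

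For the \textbf{lower bound} one needs a second-moment / coarse-graining argument showing that at scales $N^{1/k}, N^{2/k},\ldots$ the field accumulates height along a "tree" of nested boxes, much as in the Daviaud and Bolthausen-Deuschel-Giacomin constructions. Concretely, fix a large integer $k$, partition $D_N$ into boxes of side $N^{1-1/k}$, inside each such box again into sub-boxes of side $N^{1-2/k}$, and so on down $k$ levels. Using the random-walk representation / finite-energy decomposition of the Ginzburg-Landau field, one shows that the coarse-field increments between successive scales behave, up to errors controlled by Brascamp-Lieb and the Helffer-Sjöstrand representation, like independent centered fields whose variance at each scale is $\frac{g}{k}\log N(1+o(1))$. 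One then asks, at each level, for the increment to exceed a prescribed fraction of $2\sqrt g \eta \log N$; by the Gaussian-type lower tail (the content behind (\ref{lb}) and (\ref{var})) each such event has probability $\asymp N^{-2\eta^2/k+o(1)}$ and there are $\asymp N^{2/k}$ candidate sub-boxes at each level, so in expectation a positive fraction survive. A truncated second-moment estimate on the number of surviving leaves — for which one needs decorrelation of the field across disjoint boxes, again supplied by the finite-energy/random-walk representation — shows $|\mathcal{H}_N(\eta)|\geq N^{2(1-\eta^2)-\varepsilon_k}$ with probability bounded away from zero, where $\varepsilon_k\to 0$ as $k\to\infty$; a standard $0$–$1$ law or a further iteration upgrades "positive probability" to "high probability".

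The \textbf{main obstacle} is the lower bound: unlike the DGFF, the Ginzburg-Landau field is not Gaussian, so one cannot directly decompose it into independent scale increments. The key technical input is therefore a quantitative decoupling of the field at well-separated scales — controlling the conditional law of the fine field given the coarse field, and the near-independence of increments across disjoint sub-boxes — with errors that are negligible on the $\log N$ scale. This is where the Helffer-Sjöstrand/random-walk representation and the Brascamp-Lieb inequality do the heavy lifting, together with the Gaussian upper and lower tail bounds from Theorem \ref{main}; once the scale increments are shown to be "Gaussian enough," the combinatorial bookkeeping follows Daviaud's argument essentially verbatim.
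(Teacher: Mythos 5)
Your overall architecture is the paper's: a first-moment/union bound for the upper bound (which is exactly how the paper proceeds, via Proposition \ref{lem: tail bound} with $u=2\sqrt{g}\eta\log N$, noting $\log\Delta\le\log N$), and a truncated multiscale second-moment count for the lower bound. For the lower bound the paper does not work with a hierarchical partition into boxes but with the increments $U_m(v)$ of mesoscopic \emph{harmonic averages} centered at each candidate point $v$, constrains each $U_m(v)$ to a window around $\frac{\eta}{K}\cdot 2\sqrt{g}\log N$, and computes first and second moments by a change of measure using the joint exponential-moment asymptotics of Theorem \ref{expcircle}. The decoupling you attribute to the Helffer--Sj\"ostrand/random-walk representation is in fact supplied by Miller's approximate harmonic coupling (Theorem \ref{decouple}) together with the Naddaf--Spencer/Miller CLT; this is the paper's central technical input and your plan leaves it as a black box, but as a plan that is acceptable.

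The genuine gap is in your last step. A ``standard $0$--$1$ law'' is not available: for each $N$ there is a single finite-volume Gibbs measure, there is no tail $\sigma$-algebra or ergodic structure to invoke, and the Paley--Zygmund inequality applied to the truncated count only yields
\begin{equation*}
\mathbb{P}\bigl(Z(\eta,\beta)\ge \tfrac12\,\mathbb{E}[Z(\eta,\beta)]\bigr)\;\ge\; c\,N^{-11\beta\eta^{2}},
\end{equation*}
a \emph{polynomially small} probability (your claim that it is ``bounded away from zero'' is not what the second-moment bound $\mathbb{E}[Z^2]\le N^{11\beta\eta^2}\mathbb{E}[Z]^2$ delivers). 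Upgrading this to high probability is a real argument, not a routine iteration: one must tile $[-N,N]^2$ by $\asymp N^{\gamma}$ disjoint macroscopic boxes of side $N^{1-\gamma}$, condition on the field on the union of their boundaries, use the Gibbs (domain Markov) property to factor the conditional probability over the boxes, and then use the harmonic coupling once more to replace each conditional law by the zero-boundary law up to an error $N^{-\delta(1-\gamma)}$; one then needs $\gamma$ and $\beta$ tuned so that $11\beta\eta^2(1-\gamma)<\min(\gamma,\delta(1-\gamma))$, so that the $N^{\gamma}$ essentially independent trials, each succeeding with probability $N^{-11\beta\eta^2(1-\gamma)}$, produce a success except with stretched-exponentially small probability. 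Without this bootstrapping step (Section \ref{boot} of the paper) the lower bound in (\ref{eq: high points}) is only established with vanishing probability, which does not give convergence in probability.
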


{This result is consistent with the conjecture that the level sets of the
Ginzburg-Landau model with zero boundary condition converge to CLE(4), a
collection of conformally invariant random loops (see \cite{SW} for the
definition of the CLE and how to construct a coupling with GFF).} Theorem %
\ref{thm: high points} will be proved in Section \ref{hp}. The main step in
the proofs of the upper bound (\ref{ub}) and the upper bound of (\ref{eq:
high points}) is the following pointwise tail bound for the Ginzburg-Landau
field (\ref{GL}). Here and in the sequel of the paper, for a set $A\subset 
\mathbb{Z}^{2}$ and a point $v\in \mathbb{Z}^{2}$, we use \textup{\textup{%
dist}}$(v,A)$ to denote the (lattice) distance from $v$ to $A$.

\begin{theorem}
\label{thm: tail bound} Let $g$ be the constant as in Theorem \ref{main}.
Given any $C<\infty $, we have for all $v\in D_{N}$, and all $0<u<C\log
\text{dist} \left( v,\partial D_{N}\right)  ,$ 
\begin{equation}
\mathbb{P}^{D_{N},0}\left( \phi \left( v\right) \geq u\right) \leq \exp
\left( -\frac{u^{2}}{2g\log dist\left( v,\partial D_{N}\right) }+o\left(
\log dist\left( v,\partial D_{N}\right) \right) \right) .  \label{inn}
\end{equation}
\end{theorem}

\bigskip The tail bound \eqref{inn} was only known for a class of potentials 
$V\left( \cdot \right) $ that has elliptic contast at most $2$ (i.e., $%
c_{0}\leq V^{\prime \prime }\leq 2c_{0}$, for some $c_{0}>0$) and bounded
third derivative, and $\phi $ is the \textit{infinite volume} limit of the
Gibbs measure (\ref{GL}) (see \cite{CS}). Theorem \ref{thm: tail bound} will
be proved in Section \ref{ptwise}.

\subsection{Historical Survey\label{hist}}

\subsubsection{Ginzburg-Landau fields}

The Gibbs measure (\ref{GL}) was first introduced by Brascamp, Lebowitz and
Lieb, in the name of anharmonic crystals \cite{BLL}. It is believed that the
large scale behaviors of this class of Gibbs measures resemble that of the
Gaussian free field. Rigorous mathematical studies for convex perturbations
of GFF (in particular, the special example called lattice dipole gas) were
initiated by the renormalization group approach of \cite{GK}, and further
developed by \cite{BY}, which confirm its correlation function behaves like
a continuous GFF in the scaling limit. Renormalization group is a powerful
tool to study gradient field models, but it is only applicable in the
perturbative case, i.e., when the potential is given by a small perturbation
of Gaussian, and thus the Hessian of the Hamiltonian is close to the
standard Laplacian. The non-perturbative approach that allows one to study
any convex potential $V$ is based on the Helffer-Sj\"{o}strand formula (\cite%
{HS}, \cite{Hel}) that represents the mean and covariance of such fields in
terms of an elliptic operator (or, probabilisticly, a random walk in dynamic
random environment). We give here an incomplete list of{\ references} that
study the scaling limits of gradient field models. The classification of the
gradient Gibbs states on $\mathbb{Z}^{d}$ were proved by Funaki and Spohn 
\cite{FS}. Deuschel, Giacomin and Ioffe \cite{DGI} studie{d} the large
deviation principle of the macroscopic surface profile in a bounded domain,
where they also introduce the random walk representation of the Helffer-Sj%
\"{o}strand formula. The central limit theorem for linear functionals of the
gradient fields was first established by Naddaf and Spencer \cite{NS} for
the infinite volume gradient Gibbs states with zero tilt (the corresponding
dynamical CLT was proved in \cite{GOS}), and later by Miller \cite{M} for
the gradient fields in bounded domains. It is also proved in \cite{M2} that
the level set for such gradient fields in a bounded domain (with certain
Dirichlet boundary condition) converges to the chordal SLE(4), an example of
the conformally-invariant random curve in the plane known as the
Schramm-Loewner Evolution (for a survey on SLE, see e.g. \cite{Law}).

Nonlinear functionals of the gradient fields are much less known. With
additional bounded ellipticity assumption on $V$, it is proved by Conlon and
Spencer \cite{CS} that for the infinite gradient-Gibbs states with zero
slope, there exists $C<\infty $ such that 
\begin{equation*}
 \left\vert \log \mathbb{E}\left[ e^{t\left( \phi \left( 0\right) -\phi \left(
x\right) \right) }\right] -\frac{t^{2}}{2}\text{Var}\left[ \phi \left(
0\right) -\phi \left( x\right) \right] \right\vert \leq Ct^{3}\left\Vert
V^{\prime \prime \prime }\right\Vert _{\infty }.
\end{equation*}%
Their argument is based on the Helffer-Sj\"{o}strand formula and operator
theory on weighted Hilbert space. This phenomenon is remarkable because it
indicates the pointwise distribution of $\phi \left( 0\right) -\phi \left(
x\right) $ is nearly Gaussian, and one has to go to the large deviation
regime (corresponding to $t=O\left( \log \left\vert x\right\vert \right) $)
to see non-Gaussian tails. In this paper we remove the bounded ellipticiy
assumption, and rely our proof on a different stretegy.

\subsubsection{Extrema of log-correlated random fields}

Although the macroscopic behavior of linear functionals of the gradient
fields are now well understood, finer properties of the field, such as the
behavior of its maximum, remain to be clarified. Question abouts the maximum
fit into the wider context of the study of extrema of log-correlated random
fields.

Multiscale analysis is the key to study the extrema of such random fields. The conceptually simplest cases, which already exhibit the most crucial phenomena
underlying the behavior the extrema, are tree models such as Branching Brownian Motion and
Branching Random Walk. In his seminal work Bramson introduced a truncated second
moment method to study Branching Brownian Motion \cite{BramsonMaxDisplacementofBBM,Bramson1ConvergenceofSolutionsOfKolmogorovEqn}.
This method has been much refined to obtain detailed results to the level of
the convergence of the extremal process in Bramson's setting and for
Branching Random Walk \cite{ABKExtremalProcOFBBM,AidekonBerestickyBruneyShiBBMSeenFromItsTip,AidekonConvinLawofMinofBRW,BramsonDingZeitouni-ConvergenceinLawOfTheMaxOfNonLAtticeBRW,MadauleConvergenceInLawOftheBRWSeenFromItsTip}.

Beyond such processes the most investigated case is the Gaussian Free Field.
The discrete Gaussian Free Field is the special case {$V(x)=\frac{1}{2}x^{2}$%
} in the present set-up. Bolthausen, Deuschel and Giacomin \cite{BDG} first
showed the equivalent of our main result, which was later improved \cite{BZ,
BDZ2} to 
\begin{equation*}
\sup_{x\in D_{N}}\phi \left( x\right) =2\sqrt{g_{0}}\log N-\frac{3}{4}\sqrt{%
g_{0}}\log \log N+O\left( 1\right) ,\text{ as }N\rightarrow \infty ,
\end{equation*}%
where $g_{0}=2/\pi $. Furthermore it has been proved that the $O(1)$ term
converges in law and the geometric properties of the near extrema has been
studied, including the convergence of the extremal process \cite%
{DZ,BL2,BiskupLouidorConformalSymmetries,BiskupLouidorFullExtProcClusterLawEtc}%
. The equivalent of our Theorem \ref{thm: high points} for the discrete
Gaussian Free Field was proved in \cite{Dav}. Some results have been
generalized to a wider class of log-correlated Gaussian fields \cite{DRZ}.

The article \cite{BeliusKister2DCT} studied the extrema of a log-correlated
field that is neither Gaussian nor endowed with an exact tree structure. It
constructed what can be interpreted as a sequence of regularizations of the
field and from these obtained a collection of approximate branching random
walks indexed by the points of the field, to which Bramson's method can be
applied (regularization also plays an important role in problems connected
to the continuum Gaussian Free Field \cite%
{SheffieldGFFforMath,robert2008hydrodynamic,DuplantierSheffieldLQGandKPZ,HuMillerPeresThickPointsofGFF}%
). 

\cite{KistlerDerridasRandomEnergyModelsBeyondSpinGlasses} adapted this
approach to the Gaussian Free Field, with the
regularizations given by harmonic averages on concentric boxes (``local
projections"). It also describes a ``K-level coarse-graining" which is 
a particularly streamlined version of the multiscale argument that provides
leading order estimates for the maximum from minimal technical inputs. Subsequently versions of it has been used to study the
extrema of many cases of non-Gaussian log-correlated random fields \cite%
{ArguinBeliusHarper-RandomZeta,ArgBelBou15,PaqZei16,ChaMadNaj16,LamPaq16}.

\subsection{Proof strategy}


To prove the tail bound \eqref{inn}, the estimates \eqref{eq:max} for the
maximum and \eqref{eq: high points} for the high-points we adapt the aforementioned
local projections and $K-$
level coarse-graining of \cite{KistlerDerridasRandomEnergyModelsBeyondSpinGlasses}.
Namely, we consider the harmonic average over circles of the field around each point, as a
process indexed by the side-length of the box, and use the first moment
method to obtain an upper bound for the maximum and a truncated second
moment argument involving the average process to get a lower bound for the
maximum. These average processes are expected to evolve similarly to
branching random walks as one varies the side-length of the box at dyadic
scales. For the Gaussian Free Field, Gaussian orthogonal decomposition
implies the increments of such harmonic averages are independent, making the
random walk approximation fairly straightforward. This fails for the general
gradient field models studied in this paper. In fact, one of the main
contributions of this paper is to prove the asymptotic decoupling of these
increments (Theorem \ref{ind}). We apply the useful tool from \cite{M}, that
gives an approximate harmonic coupling of the Ginzburg-Landau field on a
bounded domain with different boundary conditions. Inspired by the $K$-level
coarse-graining of \cite{KistlerDerridasRandomEnergyModelsBeyondSpinGlasses}
we exploit that for the level of accuracy we seek in the present paper, it
is enough to consider the behaviour of the approximate random walks over a relatively
small number of large increments, corresponding to a small number of scales
(only finitely many in the case of the Gaussian Free Field; for technical
reasons we use a slowly growing number of increments). The approximate
harmonic coupling allows us to show that each increment of the harmonic
average, conditioned on the Ginzburg-Landau field outside, is distributed
not far from a Gaussian, after discarding a thin layers between each scale.
This gives the pointwise tail bound for the Ginzburg-Landau field, and thus
also the upper bound in Theorem \ref{main}. A similar argument via the truncated second moment method gives the
two-point tail bounds needed to obtain the lower bound in Theorem \ref{main}.

\subsection{Open question}

We finish the introduction with a corresponding open question for dimer
models. A (uniform) dimer model on $\mathbb{Z}^{2}$ can be{\ thought} of as
an integer valued random surface $h\left( v\right) ,v\in \mathbb{Z}^{2}$. It
is an integrable model with determinantal structure. It is shown in \cite{K1}
and \cite{K2} that the height fluctuation $h\left( 0\right) -h\left(
v\right) $ has logarithmic variance, and moreover the rescaled height
function converges weakly to GFF. A main conjecture in this field is that
the level sets of the height function converges to CLE(4). Still, it would
be very interesting to prove the maximum of the dimer height function
satisfies Theorem \ref{main}. The method in the present paper does not apply
directly because the harmonic coupling (see Section \ref{sec:hc}) have not
yet been established for the dimer model.

\section{Tools}

\subsection{Brascamp-Lieb inequality}

One can bound the variances and exponential moments with respect to the
Ginzburg-Landau measure by those with respect to the Gaussian measure, using
the following Brascamp-Lieb inequality. Let $\phi $ be sampled from the
Gibbs measure (\ref{GL}), with a nearest-neighbor potential $V\in
C^{2}\left( \mathbb{R}\right) $ that satisfies $\inf_{x\in \mathbb{R}%
}V^{\prime \prime }\left( x\right) \geq c_{-}>0$. Given $f\in \mathbb{R}%
^{D_{N}}$, we define%
\begin{equation*}
\left\langle \phi ,f\right\rangle :=\sum_{x\in D_{N}}\phi \left( x\right)
f\left( x\right) .
\end{equation*}

\begin{lemma}[Brascamp-Lieb inequalities \protect\cite{BL}] \label{BL}
Let $\mathbb{E}_{DGFF}^{D_{N},0}$ and Var$_{DGFF}^{D_{N},0}$ denote the
expectation and variance with respect to the discrete GFF\ measure (that is, (\ref%
{GL}) with $V\left( x\right) =x^{2}/2$). Then for any $f\in \mathbb{R}%
^{D_{N}}$, 
\begin{align}
\text{Var}^{D_{N},0}\left\langle \phi ,f\right\rangle \quad & \leq & &
c_{-}^{-1}Var_{DGFF}^{D_{N},0}\left\langle \phi ,f\right\rangle ,
\label{eq: BL var bound} \\
\mathbb{E}^{D_{N},0}\left( \left\langle \phi ,f\right\rangle -\mathbb{E}%
^{D_{N},0}\left\langle \phi ,f\right\rangle \right) ^{2k}\quad & \leq & &
c_{-}^{-k}\mathbb{E}_{DGFF}^{D_{N},0}\left( \left\langle \phi
,f\right\rangle -\mathbb{E}_{DGFF}^{D_{N},0}\left\langle \phi
,f\right\rangle \right) ^{2k},\text{ for }k\in \mathbb{N}
\label{eq: BL even moment bound} \\
\mathbb{E}^{D_{N},0}\left[ \exp \left( \left\langle \phi ,f\right\rangle -%
\mathbb{E}^{D_{N},0}\left\langle \phi ,f\right\rangle \right) \right] \quad
& \leq & & \exp \left( \frac{1}{2}c_{-}^{-1}Var_{DGFF}^{D_{N},0}\left\langle
\phi ,f\right\rangle \right) .  \label{eq: BL exp mom bound}
\end{align}
\end{lemma}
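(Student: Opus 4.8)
The plan is to reduce the three inequalities to the single structural fact that \eqref{GL} is a uniformly convex perturbation of the DGFF, and then invoke (or re-derive) the Brascamp--Lieb comparison. Regard $\mu$ as a measure on $\mathbb{R}^{n}$, $n=|D_N\setminus\partial D_N|$, with density proportional to $e^{-H}$, $H(\phi)=\sum_{x\in D_N}\sum_{i=1}^{2}V(\nabla_i\phi(x))$; the Gaussian case $V(x)=x^2/2$ gives $H_G(\phi)=\tfrac12\langle\phi,A_G\phi\rangle$ with $A_G=-\Delta^{D_N}$ the discrete Dirichlet Laplacian, so that $A_G>0$ and $\Var_G\langle\phi,f\rangle=\langle f,A_G^{-1}f\rangle$. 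The input from \eqref{a2} is the uniform Hessian bound: for all $\phi$ and all $v\in\mathbb{R}^{n}$,
\[
\langle v,\mathrm{Hess}\,H(\phi)\,v\rangle=\sum_{x\in D_N}\sum_{i=1}^{2}V''(\nabla_i\phi(x))\,(\nabla_i v(x))^{2}\ \geq\ c_-\langle v,A_Gv\rangle ,
\]
that is, $\mathrm{Hess}\,H(\phi)\geq c_-A_G$ uniformly in $\phi$.

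Given this, the Brascamp--Lieb comparison theorem \cite{BL} applies with $A:=c_-A_G$ and says that, for any $f$, the law under $\mu$ of $\langle\phi,f\rangle-\mathbb{E}_\mu\langle\phi,f\rangle$ is dominated in convex order by the centered Gaussian of variance $\langle f,A^{-1}f\rangle=c_-^{-1}\langle f,A_G^{-1}f\rangle=c_-^{-1}\Var_G\langle\phi,f\rangle$. Testing this domination against the convex functions $t\mapsto t^{2}$, $t\mapsto t^{2k}$, and $t\mapsto e^{t}$ produces, respectively, the three displayed inequalities, using that for a centered Gaussian of variance $\tau^{2}$ one has second moment $\tau^{2}$, $2k$-th moment $(2k-1)!!\,\tau^{2k}$, and $\mathbb{E}e^{Z}=e^{\tau^{2}/2}$, while the corresponding quantities for the DGFF are obtained by setting $c_-=1$.

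For the first and third inequalities I would also give self-contained proofs, which is useful since the sharp constant in the comparison is really needed only for the moment bound. For the variance bound, let $\mathcal{L}=\Delta-\nabla H\cdot\nabla$ be the Langevin generator (nonpositive, self-adjoint on $L^{2}(\mu)$) and $\mathbf{A}=-\mathcal{L}\otimes I+\mathrm{Hess}\,H$ on $L^{2}(\mu;\mathbb{R}^{n})$, where $\mathrm{Hess}\,H$ acts by matrix multiplication. Solving $-\mathcal{L}u=\langle\phi,f\rangle-\mathbb{E}_\mu\langle\phi,f\rangle$ (possible since $-\mathcal{L}$ has a spectral gap when $N$ is finite and $\mathrm{Hess}\,H\geq c_-A_G>0$), differentiating, and using the commutation identity $\nabla(-\mathcal{L}u)=\mathbf{A}\nabla u$ together with $\nabla\langle\phi,f\rangle=f$ yields the Helffer--Sjöstrand identity $\Var_\mu\langle\phi,f\rangle=\langle f,\mathbf{A}^{-1}f\rangle_{L^{2}(\mu;\mathbb{R}^{n})}$ (see \cite{HS,Hel}); since $-\mathcal{L}\otimes I\geq 0$ we get $\mathbf{A}\geq\mathrm{Hess}\,H\geq c_-(I\otimes A_G)$, hence $\mathbf{A}^{-1}\leq c_-^{-1}(I\otimes A_G^{-1})$, and as $f$ is a constant vector field this gives $\Var_\mu\langle\phi,f\rangle\leq c_-^{-1}\langle f,A_G^{-1}f\rangle$. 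For the exponential bound, tilt: $d\mu_s\propto e^{-H(\phi)+s\langle\phi,f\rangle}\prod_x d\phi(x)$ has the same Hessian as $\mu$, so the variance bound holds for every $\mu_s$, whence $\Lambda(s):=\log\mathbb{E}_\mu[e^{s\langle\phi,f\rangle}]$ satisfies $\Lambda''(s)=\Var_{\mu_s}\langle\phi,f\rangle\leq c_-^{-1}\Var_G\langle\phi,f\rangle$; Taylor's formula with integral remainder at $s=1$, with $\Lambda'(0)=\mathbb{E}_\mu\langle\phi,f\rangle$, then gives the claimed bound.

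The main obstacle is the $2k$-th moment inequality with the sharp constant $c_-^{-k}$: it does not follow from the Helffer--Sjöstrand identity, nor from the exponential bound (the latter route loses a factor growing in $k$), and genuinely requires the Prékopa--Leindler/interpolation argument underlying \cite{BL}. For the applications in this paper, however, any constant depending only on $c_-$ would do in place of $c_-^{-k}$, so this subtlety is not essential here.
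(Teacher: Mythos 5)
The paper does not prove this lemma at all; it is quoted directly from \cite{BL}, so there is no internal proof to compare against. Your derivation is a correct account of what that citation covers: the reduction to the uniform Hessian bound $\mathrm{Hess}\,H\ge c_-A_G$ is exactly the structural input, and invoking the Brascamp--Lieb comparison with $A=c_-A_G$ yields all three inequalities. Your supplementary self-contained arguments are also sound: the Helffer--Sj\"ostrand identity $\Var_\mu\langle\phi,f\rangle=\langle f,\mathbf{A}^{-1}f\rangle$ together with $\mathbf{A}\ge c_-(I\otimes A_G)$ gives the variance bound (this is the same mechanism the paper relies on elsewhere, via \cite{HS,Hel,DGI}), and the tilting argument $\Lambda''(s)=\Var_{\mu_s}\langle\phi,f\rangle\le c_-^{-1}\Var_G\langle\phi,f\rangle$ plus Taylor's formula gives the exponential bound, since adding a linear term does not change the Hessian. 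You are also right to single out the $2k$-th moment inequality with the sharp constant $c_-^{-k}$ as the one piece that genuinely requires the original Brascamp--Lieb argument rather than these softer routes; and your observation that any constant with at most geometric growth in $k$ would suffice for the application in \eqref{2kmom} is accurate, since the relevant series $\sum_k t^{2k}(2k)!^{-1}(2k-1)!!\,C^k\varepsilon^{2k}$ converges for any such constant.
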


The Brascamp-Lieb inequalities can be used to show the following a-priori
tail bound for $\phi .$

\begin{lemma}
\label{lem: BL tail bound} There is a positive constant $c_{BL}$ such that 
\begin{equation}
\mathbb{P}^{D_{N},0}\left( \phi (v)\geq u\right) \leq e^{-c_{BL}\frac{u^{2}}{%
dist(v,\partial D_{N})}},\text{ for }v\in D_{N}.  \label{eq: BL tail bound}
\end{equation}
\end{lemma}

\begin{proof}
By Chebyshev's inequality,%
\begin{equation*}
\mathbb{P}^{D_{N},0}\left( \phi (v)\geq u\right) \leq e^{-tu}\mathbb{E}%
^{D_{N},0}\exp \left( t\phi \left( v\right) \right) .
\end{equation*}%
Applying the Brascamp-Lieb inequality with $f=\delta _{v}$, and using the
fact that (see the Green's function asymptotics in \cite{LL}) 
\begin{equation*}
\text{Var}_{DGFF}^{D_{N},0}\phi \left( v\right) = G_{D_N}(v,v) = \sqrt{2/\pi }\log
dist\left( v,\partial D_{N}\right) +O\left( 1\right) ,
\end{equation*}%
we have 
\begin{equation*}
\mathbb{P}^{D_{N},0}\left( \phi (v)\geq u\right) \leq \exp \left( -tu+\frac{%
t^{2}}{2}c_{1}\log \text{dist}\left( v,\partial D_{N}\right) \right) .
\end{equation*}%
Optimizing over $t$ then yields the result.
\end{proof}

\begin{remark}
\label{rem: BL upper bound} By a union bound over the $(2N+1)^{2}$ points of 
$D_{N}$ and take $u\gg \sqrt{1/c_{BL}}\log N$, so that the right-hand side
of \eqref{eq: BL tail bound} is $\ll N^{-2}$, one obtains an upper bound of $%
\sqrt{1/c_{BL}}\log N$ for the maximum of $\phi (v)$. This is an upper bound
of the right order, but the constant in front of $\log N$ is larger than the ''true" one $2\sqrt{g}$.
\end{remark}

\subsection{Approximate harmonic coupling\label{sec:hc}}

By definition, the Ginzburg-Landau measures satisfy the domain Markov
property: conditioned on the values on the boundary of a domain, the field
inside the domain is again a gradient field with boundary condition given by
the conditioned values. For the discrete GFF, there is in addition a nice
orthogonal decomposition. More precisely, the conditioned field inside the
domain is the discrete harmonic extension of the boundary value to the whole
domain plus an \textit{independent }copy of a \textit{zero boundary}
discrete GFF.

While this exact decomposition does not carry over to general
Ginzburg-Landau measures, the next result due to Jason Miller, see \cite{M},
provides an approximate version. For $D\subset \mathbb{Z}^{2}$, define the
Ginzburg-Landau measure on $D$ with Dirichlet boundary condition $f$ by 
\begin{equation}
d\mu _{D}^{f}=Z_{D}^{-1}\exp \left[ -\sum_{x\in D}\sum_{i=1}^{2}V\left(
\nabla _{i}\phi \left( x\right) \right) \right] \prod_{x\in D\backslash
\partial D}d\phi \left( x\right) \prod_{x\in \partial D}\delta _{0}\left(
\phi \left( x\right) -f\left( x\right) \right) .  \label{GLD}
\end{equation}

\begin{theorem}[Theorem 1.2 in \protect\cite{M}]
\label{decouple} Let $D\subset \mathbb{Z}^{2}$ be a simply connected domain
of diameter $R$, and denote $D^{r}=\left\{ v\in D:\text{dist}(v,\partial
D)>r\right\} $. Let $\Lambda $ be such that $f:\partial D\rightarrow \mathbb{%
R}$ satisfies $\max_{x\in \partial D}\left\vert f\left( x\right) \right\vert
\leq \Lambda \left\vert \log R\right\vert ^{\Lambda }$. Let $\phi $ be
sampled from the measure (\ref{GLD}) with zero boundary condition, and $\phi
^{f}$ be sampled from the measure (\ref{GLD}) with boundary condition $f$.
Then there exist constants $c,\gamma ,\delta \in \left( 0,1\right) $, that
only depend on $V$, so that if $r>cR^{\gamma }$ then the following holds.
There exists a coupling $\left( \phi ,\phi ^{f}\right) $, such that if $\hat{%
\phi}:D^{r}\rightarrow \mathbb{R}$ is discrete harmonic with $\hat{\phi}%
|_{\partial D^{r}}=\left( \phi ^{f}-\phi \right) |_{\partial D^{r}}$, then 
\begin{equation*}
\mathbb{P}\left( \phi ^{f}=\phi +\hat{\phi}\text{ in }D^{r}\right) \geq
1-c\left( \Lambda \right) R^{-\delta }.
\end{equation*}
\end{theorem}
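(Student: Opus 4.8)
The plan is to realise the coupling as a perturbation of the Gaussian one, and then to estimate how different the two laws are once restricted to $D(r)$. By the domain Markov property it suffices to treat the case where $\phi$ and $\phi^{f}$ are the Ginzburg--Landau fields on $D$ itself with boundary data $0$ and $f$. Let $\mathfrak{h}$ be the discrete harmonic extension of $f$ to $D$. If $V(x)=x^{2}/2$ then $\phi+\mathfrak{h}$ has exactly the law of $\phi^{f}$, so one takes $\phi^{f}:=\phi+\mathfrak{h}$ and is done: $\phi^{f}-\phi=\mathfrak{h}$ is harmonic in $D$, hence in $D(r)$, and since the harmonic extension of $\mathfrak{h}|_{\partial D(r)}$ is $\mathfrak{h}$ itself, this gives $\phi^{f}-\phi=\hat\phi$ on $D(r)$ with probability one. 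For general $V$ the field $\phi+\mathfrak{h}$ has the right boundary data but the wrong law; writing $\nu$ and $\nu^{f}$ for the laws of $\phi+\mathfrak{h}$ and of $\phi^{f}$ on $\mathbb{R}^{D\setminus\partial D}$, one has the explicit Radon--Nikodym derivative
\[
\frac{d\nu^{f}}{d\nu}(\chi)=\frac{Z_{0}}{Z_{f}}\exp\Big(\sum_{e}\big[V(\nabla(\chi-\mathfrak{h})(e))-V(\nabla\chi(e))\big]\Big).
\]
Thus the whole question is reduced to showing that the total variation distance between $\nu$ and $\nu^{f}$, \emph{restricted to the coordinates in $D(r)$}, is at most $c(\Lambda)R^{-\delta}$, uniformly over $f$ with $\max_{\partial D}|f|\le\Lambda|\log R|^{\Lambda}$; the maximal coupling then agrees on $D(r)$ with the stated probability, and on the agreement event $\phi^{f}-\phi=\mathfrak{h}=\hat\phi$ there.

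The mechanism that should make this restricted total variation small is that the weight above depends on $\chi$ essentially only through the edges near $\partial D$, where $\nabla\mathfrak{h}$ is of order $\max_{\partial D}|f|$, whereas deep inside $D(r)$ one has the discrete gradient estimate $|\nabla\mathfrak{h}(x)|\lesssim\big(\max_{\partial D}|f|\big)/\dist(x,\partial D)$ and the field barely feels the change of measure. The naive one-shot version of this argument does not close, however, precisely because the weight does fluctuate on the scale of a power of $\log R$ near the boundary. The plan to repair it is a multiscale, homogenization-type argument: peel $D$ into a geometrically spaced family of simply connected domains $D=\Omega_{0}\supset\Omega_{1}\supset\cdots\supset\Omega_{L}=D(r)$, realise the coupling from the outside inward using the domain Markov property, and at the $j$-th stage re-run the reference coupling on $\Omega_{j}$ relative to the harmonic extension \emph{into $\Omega_{j}$} of $(\phi^{f}-\phi)|_{\partial\Omega_{j}}$, so that the re-coupling at scale $j$ only has to be accurate on the next, smaller, domain $\Omega_{j+1}$. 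The inputs to each stage are the harmonic gradient estimate just mentioned, the Brascamp--Lieb inequalities to bound exponential moments of the change-of-measure weight, and Theorem \ref{thm: miller CLT} (equivalently, the Helffer--Sjöstrand representation, which identifies the field's response to a boundary perturbation with an annealed random-walk Green's function), which is what lets one say that a modification near $\partial\Omega_{j}$ is almost invisible to $\Omega_{j+1}$.

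The hard part is exactly this per-stage estimate, and it is where the non-Gaussianity genuinely has to be confronted: one must show that the change of measure performed at one scale is felt by the field at the next scale in only up to an error that is a power of the scale, and this has to hold uniformly over the random boundary data of polylogarithmic size that the earlier stages have generated — a delicate combination of the a priori Brascamp--Lieb control and the Gaussian-limit control of Theorem \ref{thm: miller CLT}. Summing the $L$ per-stage errors, and tracking how the admissible ratio of consecutive scales degrades along the iteration, is what forces the hypothesis $r>cR^{1-\gamma}$ and produces the exponent $\delta$, with $c,\gamma,\delta$ depending only on $c_{-},c_{+}$ through the ellipticity constants entering the homogenization and the Brascamp--Lieb bounds.
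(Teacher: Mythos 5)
This theorem is not proved in the paper at all: it is imported verbatim from Miller's work \cite{M}, so there is no ``paper proof'' to match yours against. Judged on its own terms, your text is a strategy outline rather than a proof, and the outline has two genuine problems.

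First, the decisive step is missing, and you say so yourself: the per-stage estimate that ``the change of measure performed at one scale is felt by the field at the next scale only up to an error that is a power of the scale'' is the entire content of the theorem, and nothing in the proposal indicates how to establish it. Worse, the tool you name for it, Theorem \ref{thm: miller CLT}, is itself derived in \cite{M} \emph{from} Theorem \ref{decouple}, so invoking it here is circular; the only legitimate input at this level is the Helffer--Sj\"ostrand/random-walk representation itself, and extracting a quantitative $R^{-\delta}$ bound from it is exactly the hard analytic work (Miller's actual argument runs the synchronous coupling of the Langevin dynamics for the two boundary conditions, observes that $\phi^{f}_{t}-\phi_{t}$ solves a discrete parabolic equation with uniformly elliptic space--time random coefficients, and then uses Nash--Aronson/De Giorgi--Nash--Moser regularity to show the stationary difference is approximately harmonic in the bulk --- a mechanism quite different from a change-of-measure/total-variation computation).

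Second, your one-shot reduction aims at a statement that is strictly stronger than the theorem and is likely not what you can prove. A maximal coupling of $\nu$ and $\nu^{f}$ that agrees on $D(r)$ forces $\phi^{f}-\phi=\mathfrak{h}$ there, i.e.\ the difference is the harmonic extension of $f$ itself. The theorem only asserts that the difference is harmonic with its \emph{own} (random) boundary values on $\partial D(r)$; indeed $\mathbb{E}\phi^{f}$ is harmonic for the homogenized (effective) operator, not for the discrete Laplacian, so $\mathbb{E}\phi^{f}-\mathfrak{h}$ need not be $O(R^{-\delta})$, and the total variation distance you want to bound need not be small. Your multiscale repair implicitly concedes this by re-harmonizing with the actual difference at each scale, but then the agreement event no longer yields $\phi^{f}-\phi=\mathfrak{h}$, and the opening reduction should be discarded in favor of stating the correct per-scale coupling target from the start.
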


\bigskip An immediate application of Theorem \ref{decouple} shows that the
mean of a Ginzburg-Landau field at one point in the bulk is approximately
(discrete) harmonic.

\begin{theorem}[Theorem 1.3 in \protect\cite{M}]
\label{mean}Suppose the same conditions in Theorem \ref{decouple} holds. Let 
$\phi ^{f},c,\gamma ,\delta ,D^{r}$ be defined as in Theorem \ref{decouple}.
For all $r>cR^{\gamma }$, and discrete harmonic function $\hat{\phi}%
:D^{r}\rightarrow \mathbb{R}$ with $\hat{\phi}|_{\partial D^{r}}=\mathbb{E}%
\phi ^{f}|_{\partial D^{r}}$, then 
\begin{equation*}
\max_{v\in D^{r}}\left\vert \mathbb{E}\phi ^{f}\left( v\right) -\hat{\phi}%
\left( v\right) \right\vert \leq c^{\prime }\left( \Lambda \right)
R^{-\delta }.
\end{equation*}
\end{theorem}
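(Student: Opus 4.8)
The plan is to derive Theorem~\ref{mean} from Theorem~\ref{decouple} by averaging the coupling over the randomness. First I would set up the coupling $(\phi,\phi^f)$ guaranteed by Theorem~\ref{decouple} with the same parameters $c,\gamma,\delta$, so that for $r>cR^{1-\gamma}$ the event $\mathcal{E}=\{\phi^f-\phi=\hat\phi_{\mathrm{rand}}\text{ in }D(r)\}$ has probability at least $1-c(\Lambda)R^{-\delta}$, where $\hat\phi_{\mathrm{rand}}$ is the discrete harmonic extension of the \emph{random} boundary data $(\phi^f-\phi)|_{\partial D(r)}$. Taking expectations, for any $x\in D(r)$ we have $\mathbb{E}[\phi^f(x)-\phi(x)] = \mathbb{E}[\hat\phi_{\mathrm{rand}}(x)] + \mathbb{E}[(\phi^f(x)-\phi(x)-\hat\phi_{\mathrm{rand}}(x))\mathbf{1}_{\mathcal{E}^c}]$. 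Since $\phi$ has zero boundary condition, $\mathbb{E}\phi(x)=0$ for $x$ in the bulk is not literally true, but rather we want to compare with the harmonic extension of $\mathbb{E}\phi^f|_{\partial D(r)}$; the point is that by linearity of the discrete harmonic extension, $\mathbb{E}[\hat\phi_{\mathrm{rand}}(x)]$ equals the harmonic extension of $\mathbb{E}[(\phi^f-\phi)|_{\partial D(r)}]$ evaluated at $x$, and one must separately control $\mathbb{E}\phi|_{\partial D(r)}$, which is itself handled by applying the same argument with $f\equiv 0$ (where it gives that $\mathbb{E}\phi$ is approximately harmonic with zero boundary data, hence small, but more cleanly one just absorbs $\phi$ into the definition of $\hat\phi$). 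Concretely: let $\hat\phi$ be the discrete harmonic function on $D(r)$ with $\hat\phi|_{\partial D(r)}=\mathbb{E}\phi^f|_{\partial D(r)}$; then $\mathbb{E}\hat\phi_{\mathrm{rand}}(x)-\hat\phi(x)$ equals the harmonic extension of $\mathbb{E}\phi|_{\partial D(r)}$ minus zero, wait --- this needs care, and is exactly the technical point below.

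Here is the cleaner route I would actually take. Apply Theorem~\ref{decouple} twice: once with boundary data $f$ to get a coupling of $(\phi,\phi^f)$, and note that on the good event $\phi^f-\phi$ is harmonic in $D(r)$ with boundary data $(\phi^f-\phi)|_{\partial D(r)}$. Write $P_{D(r)}$ for the discrete Poisson kernel (harmonic extension operator) on $D(r)$. On the good event $\mathcal{E}$, for $x\in D(r)$,
\begin{equation*}
\phi^f(x)-\phi(x) = \sum_{y\in\partial D(r)} P_{D(r)}(x,y)\bigl(\phi^f(y)-\phi(y)\bigr).
\end{equation*}
Taking expectations and moving the expectation inside the finite sum,
\begin{equation*}
\mathbb{E}\bigl[\phi^f(x)-\phi(x)\bigr] - \sum_{y\in\partial D(r)} P_{D(r)}(x,y)\,\mathbb{E}\bigl[\phi^f(y)-\phi(y)\bigr] = \mathbb{E}\Bigl[\bigl(\phi^f(x)-\phi(x)-\textstyle\sum_y P_{D(r)}(x,y)(\phi^f(y)-\phi(y))\bigr)\mathbf{1}_{\mathcal{E}^c}\Bigr].
\end{equation*}
The right-hand side is bounded, via Cauchy--Schwarz, by $\mathbb{P}(\mathcal{E}^c)^{1/2}$ times the $L^2$ norm of the bracketed quantity; the latter is controlled by second moments of $\phi,\phi^f$ and of their harmonic extensions, all of which are polylogarithmic in $R$ by the Brascamp--Lieb bound (Lemma~\ref{lem: BL tail bound}) combined with the hypothesis $\max|f|\le\Lambda|\log R|^\Lambda$ and the maximum principle for $P_{D(r)}$. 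Since $\mathbb{P}(\mathcal{E}^c)\le c(\Lambda)R^{-\delta}$, the right-hand side is at most $c(\Lambda)R^{-\delta/2}(\log R)^{O(\Lambda)}$, which is $\le c(\Lambda)R^{-\delta'}$ after shrinking $\delta$; I would simply rename $\delta'$ back to $\delta$ as the theorem allows constants to change.

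It remains to identify the left-hand side with $\mathbb{E}\phi^f(x)-\hat\phi(x)$ where $\hat\phi|_{\partial D(r)}=\mathbb{E}\phi^f|_{\partial D(r)}$. This is where the $\phi$-terms must cancel: one has $\sum_y P_{D(r)}(x,y)\mathbb{E}\phi^f(y) = \hat\phi(x)$ by definition, so the left side equals $\mathbb{E}\phi^f(x) - \hat\phi(x) - \mathbb{E}\phi(x) + \sum_y P_{D(r)}(x,y)\mathbb{E}\phi(y)$. To kill the last two terms I apply the \emph{same} argument with $f\equiv 0$: there $\phi^f$ and $\phi$ have the same law, so the analogous identity reads $\mathbb{E}\phi(x) - \sum_y P_{D(r)}(x,y)\mathbb{E}\phi(y) = O(c(\Lambda)R^{-\delta})$ directly (the left side for the $f\equiv0$ instance is exactly this difference, since both fields are the zero-boundary field). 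Plugging this back gives $|\mathbb{E}\phi^f(x)-\hat\phi(x)|\le c(\Lambda)R^{-\delta}$ on $D(r)$, and taking the max over $x\in D(r)$ finishes the proof. The main obstacle I anticipate is the bookkeeping in this last cancellation step --- making sure the harmonic extension operator is applied on the correct domain $D(r)$ and that the $L^2$ bounds on $\phi|_{\partial D(r)}$ and $\phi^f|_{\partial D(r)}$ are genuinely polylogarithmic uniformly over the (many) boundary points, for which the union bound over $|\partial D(r)|\le CR$ points against the Brascamp--Lieb Gaussian-type tail is comfortably strong enough.
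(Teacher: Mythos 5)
The paper does not actually prove Theorem \ref{mean}: it is imported from \cite{M} as a stated consequence of Theorem \ref{decouple}, so there is no in-paper argument to compare against. Your strategy --- take the coupling of Theorem \ref{decouple}, write $\phi^f-\phi$ as its Poisson integral over $\partial D(r)$ on the good event, take expectations, and control the bad event by Cauchy--Schwarz against $\mathbb{P}(\mathcal{E}^c)^{1/2}\le c(\Lambda)R^{-\delta/2}$ --- is the natural route and is essentially sound. Two steps need repair.

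First, your disposal of the $\mathbb{E}\phi$ terms is vacuous as written. Running the argument with $f\equiv 0$ produces
\begin{equation*}
\mathbb{E}\bigl[\phi^{0}(x)-\phi(x)\bigr]-\sum_{y\in\partial D(r)}P_{D(r)}(x,y)\,\mathbb{E}\bigl[\phi^{0}(y)-\phi(y)\bigr]=O\bigl(R^{-\delta}\bigr),
\end{equation*}
and since $\phi^{0}$ and $\phi$ are equal in law, every expectation of a difference vanishes: the left-hand side is identically $0$, not the quantity $\mathbb{E}\phi(x)-\sum_{y}P_{D(r)}(x,y)\mathbb{E}\phi(y)$ that you need to control. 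The fact you need is nonetheless true for a much simpler reason: by (\ref{a1}) the zero-boundary Gibbs measure is invariant under $\phi\mapsto-\phi$, hence $\mathbb{E}\phi\equiv 0$ and both offending terms vanish exactly. Replace the $f\equiv 0$ detour with this one-line symmetry observation.

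Second, the Cauchy--Schwarz step needs a polylogarithmic bound on the \emph{uncentered} second moment of $\phi^{f}$ on $D(r)\cup\partial D(r)$, hence on $|\mathbb{E}\phi^{f}(x)|$. The Brascamp--Lieb inequality (and Lemma \ref{lem: BL tail bound}) only controls moments of $\phi^{f}(x)-\mathbb{E}\phi^{f}(x)$; it says nothing about the mean, and an a priori bound on $\mathbb{E}\phi^{f}(x)$ is uncomfortably close to what the theorem itself asserts. The gap is fillable --- for instance the Helffer--Sj\"ostrand random-walk representation of \cite{DGI} yields the comparison principle $|\mathbb{E}\phi^{f}(x)|\le\max_{y\in\partial D}|f(y)|\le\Lambda|\log R|^{\Lambda}$ --- but this is a genuine extra ingredient that must be invoked, not a consequence of Brascamp--Lieb plus the maximum principle for $P_{D(r)}$ as you assert. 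With these two repairs (and the harmless degradation of $\delta$ to $\delta/2$ up to logarithms, which a renaming of constants absorbs) your argument goes through.
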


Theorem \ref{decouple} allows to compare a Ginzburg-Landau field with
non-zero boundary condition with one that has zero boundary condition. Since
Theorem \ref{decouple} requires that the function $f$ is not too large, we
introduce the \textquotedblleft good\textquotedblright\ event 
\begin{equation*}
\mathcal{G}\left( c\right) =\left\{ \phi :\max_{v\in D}\left\vert \phi
\left( v\right) \right\vert <c\left( \log R\right) ^{2}\right\} ,
\end{equation*}%
which is typical since even using only Brascamp-Lieb one has that $%
\max_{v\in D}|\phi (v)|\leq O\left( \log R\right) $ with high probability.
Indeed, we have

\begin{lemma}
\label{bad}There is some $c_{1}=c_{1}\left( c\right) >0$, such that $\mathbb{%
P}^{D,0}\left( \mathcal{G}\left( c\right) ^{c}\right) \leq \exp \left(
-c_{1}\left( \log R\right) ^{3}\right) $.
\end{lemma}

\begin{proof}
By the union bound,%
\begin{equation*}
\mathbb{P}^{D,0}\left( \mathcal{G}^{c}\right) \leq \sum_{v\in D}\mathbb{P}%
^{D,0}\left( \left\vert \phi \left( v\right) \right\vert >c\left( \log
R\right) ^{2}\right) .
\end{equation*}%
We apply Lemma \ref{lem: BL tail bound} to obtain 
\begin{equation*}
\mathbb{P}^{D,0}\left( \left\vert \phi \left( v\right) \right\vert >c\left(
\log R\right) ^{2}\right) \leq \exp \left( -\left( 4C\right) ^{-1}\left(
\log R\right) ^{3}+O\left( \log R\right) ^{2}\right) ,
\end{equation*}%
for some $C<\infty $, and summing over $v\in D$ then finishes the proof.
\end{proof}

We will use repeatedly the following consequence of Theorem \ref{decouple}%
. It applies to functions $\rho $ such that the integral of $\rho $ against
a harmonic function is always zero.

\begin{lemma}
\label{jasonerr}There exists constants $\delta ,\gamma >0$ such that for any
simply connected $D\subset \mathbb{Z}^{2}$ of diameter $R$, any $r>R^{\gamma
}$ and any $\rho :D\rightarrow \mathbb{R}$ supported on $D^{r}$ that
satisfies $\sum_{x\in D^{r}}\rho \left( x\right) f\left( x\right) =0$ for
all functions $f$ harmonic in $D^{r}$, and $\frac{1}{R}\sum_{y\in
D}\left\vert \rho \left( y\right) \right\vert <\infty $, we have for $R$
large enough, 
\begin{eqnarray*}
&&\left\vert \mathbb{E}^{D,f}\left[ \exp \left( R^{-1}\sum_{x\in D}\rho
\left( x\right) \phi ^{f}\left( x\right) \right) 1_{\mathcal{G}}\right] -%
\mathbb{E}^{D,0}\left[ \exp \left( R^{-1}\sum_{x\in D}\rho \left( x\right)
\phi \left( x\right) \right) 1_{\mathcal{G}}\right] \right\vert \\
&\leq &2\exp \left( c\text{Var}_{DGFF}^{D,0}\left( R^{-1}\sum_{x\in D}\rho
\left( x\right) \phi \left( x\right) \right) \right) R^{-\delta }\text{,}
\end{eqnarray*}%
for some $c<\infty .$
\end{lemma}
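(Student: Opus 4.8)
The plan is to compare the two expectations by introducing the coupling $(\phi,\phi^f)$ from Theorem \ref{decouple} applied to the domain $D_R$ at scale $r>R^{1-\gamma}$, and then to split according to the event $A=\{\phi^f-\phi=\hat\phi\text{ in }D_R(r)\}$ whose complement has probability at most $c(\Lambda)R^{-\delta}$. On the event $A$ we have $\phi^f(x)=\phi(x)+\hat\phi(x)$ for every $x\in D_R(r)$, where $\hat\phi$ is discrete harmonic in $D_R(r)$. Since $\rho$ is supported in $D_R(r)$ and kills harmonic functions there, $\sum_x\rho(x)\hat\phi(x)=0$, so on $A$ the two linear functionals $R^{-1}\sum_x\rho(x)\phi^f(x)$ and $R^{-1}\sum_x\rho(x)\phi(x)$ coincide exactly. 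Hence the difference of the two expectations, restricted to $A\cap\mathcal G$, would be zero were it not for the mismatch between the indicator $1_{\mathcal G}(\phi^f)$ and $1_{\mathcal G}(\phi)$; but $\phi^f=\phi+\hat\phi$ on $D_R(r)$ and $\hat\phi$ is harmonic with bounded-by-$\Lambda(\log R)^\Lambda$ boundary values, so on $A$ one has $|\hat\phi|\le\Lambda(\log R)^\Lambda$ throughout $D_R(r)$, which is $o((\log R)^2)$; the only genuine obstruction is the thin annulus $D_R\setminus D_R(r)$, and one absorbs it by noting that $\mathcal G$ is defined via $\max_{x\in D}|\phi(x)|$ and using Theorem \ref{decouple}'s coupling together with a Brascamp–Lieb a priori bound to control the boundary region.

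**Next I would** handle the error coming from the bad event $A^c$ and from the indicator mismatch by a Cauchy–Schwarz (or Hölder) estimate. Write the difference as $\mathbb E^{D_R,f}[\cdots 1_{\mathcal G}]-\mathbb E^{D_R,0}[\cdots 1_{\mathcal G}]=\mathbb E[(e^{X^f}-e^{X})1_{\mathcal G\cap A}]+\mathbb E[e^{X^f}1_{\mathcal G}1_{A^c}]-\mathbb E[e^X1_{\mathcal G}1_{A^c}]$ where $X^f=R^{-1}\sum_x\rho(x)\phi^f(x)$ and $X=R^{-1}\sum_x\rho(x)\phi(x)$. The first term vanishes on the part of $A$ where the two functionals agree and where $1_{\mathcal G}$ agrees, and on the remaining sliver of $A$ one bounds $|e^{X^f}-e^X|$ crudely and uses that this sliver is contained in $A^c$-type or boundary-type events of small probability. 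For the $A^c$ terms, apply Cauchy–Schwarz: $\mathbb E[e^{X}1_{A^c}]\le \mathbb E[e^{2X}]^{1/2}\,\mathbb P(A^c)^{1/2}$. By the Brascamp–Lieb exponential-moment inequality, $\mathbb E[e^{2X}]\le\exp(2c_-^{-1}\mathrm{Var}_{GFF}(X))=\exp(c\,\mathrm{Var}_{GFF}(R^{-1}\sum_x g(x)\phi(x)))$ for an appropriate harmonic-killing $g$ (here $g$ should be read as the relevant test function, matching the statement's notation), and $\mathbb P(A^c)^{1/2}\le (c(\Lambda)R^{-\delta})^{1/2}$. Re-labelling $\delta/2\mapsto\delta$ and absorbing $c(\Lambda)^{1/2}$ into the constant yields the stated bound with the factor $2$ accounting for the two $A^c$ terms (and the $f$-measure analogue, where one first uses Theorem \ref{decouple} to transfer to the zero-boundary field before applying Brascamp–Lieb).

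**The main obstacle** I anticipate is the interplay between the truncation event $\mathcal G$, which constrains $\max_{x\in D}|\phi(x)|$ over the \emph{entire} domain $D$ including the boundary layer $D\setminus D(r)$, and the harmonic coupling, which only controls $\phi^f-\phi$ \emph{inside} $D(r)$. One must argue that on $\mathcal G$ (or on a slightly enlarged good event) the discrete harmonic function $\hat\phi$ with boundary data $\phi^f-\phi$ on $\partial D(r)$ is uniformly small — this needs that $|\phi^f-\phi|\le 2(\log R)^2$ on $\partial D(r)$ on the relevant events, followed by the maximum principle to propagate the bound into $D(r)$; combined with $r>R^{1-\gamma}$ this keeps everything polylogarithmic and hence negligible against the $R^{-\delta}$ rate. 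A secondary subtlety is ensuring that the change of measure from $f$-boundary to $0$-boundary in the Brascamp–Lieb step is legitimate: one applies Brascamp–Lieb to each field in its own law (both are Ginzburg–Landau measures with $V''\ge c_->0$), so the exponential moment bound $\mathbb E^{D_R,f}[e^{2X^f}]\le\exp(c\,\mathrm{Var}_{GFF}^{D_R}(\cdots))$ holds directly, with the Gaussian variance being the same in both cases since it does not see the boundary data. Carrying out the Cauchy–Schwarz bookkeeping so that the final constant $c$ is exactly the one appearing in the displayed Brascamp–Lieb exponent is then routine.
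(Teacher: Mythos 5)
Your overall strategy is the same as the paper's: couple $\phi$ and $\phi^{f}$ via Theorem \ref{decouple}, note that on the good coupling event the two linear functionals coincide exactly because $\rho$ annihilates the discrete harmonic correction $\hat\phi$, and control the bad event by H\"older plus the Brascamp--Lieb exponential inequality. Your extra care about the mismatch between $1_{\mathcal G}(\phi^{f})$ and $1_{\mathcal G}(\phi)$ on the good event is legitimate (the paper in fact glosses over this point), and your maximum-principle argument for $\hat\phi$ is the right instinct there.

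There is, however, one genuine gap in your treatment of the bad-event terms. The Brascamp--Lieb exponential inequality controls only the \emph{centered} moment generating function, $\mathbb{E}[\exp(X^{f}-\mathbb{E}X^{f})]\le\exp(\tfrac12 c_{-}^{-1}\mathrm{Var}_{G}(X^{f}))$, so after Cauchy--Schwarz you are left with an uncontrolled factor $\exp(\mathbb{E}^{D_R,f}[X^{f}])$ where $X^{f}=R^{-1}\sum_x\rho(x)\phi^{f}(x)$. For the zero-boundary field this mean vanishes by symmetry, but for the $f$-boundary field it does not a priori, and with $\|f\|_\infty$ as large as $\Lambda(\log R)^{\Lambda}$ it could in principle overwhelm the $R^{-\delta}$ gain. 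Your proposed fix --- ``first use Theorem \ref{decouple} to transfer to the zero-boundary field before applying Brascamp--Lieb'' --- is circular here: the coupling identity only holds off the bad event, whereas you need a bound on the full (unconditional) exponential moment of $\phi^{f}$. The paper closes this gap differently, via Theorem \ref{mean}: the mean of $\phi^{f}$ is within $c(\Lambda)R^{-\delta_0}$ of a discrete harmonic function on $D(r)$, and since $\rho$ kills harmonic functions, $|\mathbb{E}^{D_R,f}[X^{f}]|\le R^{-\delta_0}\|\rho\|_\infty$, so the offending factor is $1+o(1)$. You should insert this step (or an equivalent bound on $\mathbb{E}^{D_R,f}[X^{f}]$) to make the Cauchy--Schwarz estimate legitimate.
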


\begin{remark}
This lemma is useful if Var$_{DGFF}^{D,0}\left( R^{-1}\sum_{x\in D}\rho
\left( x\right) \phi \left( x\right) \right) \ll \delta \log R$.
\end{remark}

\begin{proof}
Applying Theorem \ref{decouple}, there is an event $\mathcal{C}$ with $%
\mathbb{P}\left( \mathcal{C}^{c}\right) \leq R^{-\delta _{0}}$, where $%
\delta _{0}$ is the constant $\delta $ in Theorem \ref{decouple}, such that
on $\mathcal{C}$ we have $\phi ^{f}-\phi =\hat{\phi}$ in $D^{r}$. Therefore
on $\mathcal{C}$%
\begin{eqnarray*}
\sum_{x\in D}\rho \left( x\right) \phi ^{f}\left( x\right) &=&\sum_{x\in
D^{r}}\rho \left( x\right) \phi ^{f}\left( x\right) =\sum_{x\in D^{r}}\rho
\left( x\right) \phi \left( x\right) +\sum_{x\in D^{r}}\rho \left( x\right) 
\hat{\phi}\left( x\right) \\
&=&\sum_{x\in D^{r}}\rho \left( x\right) \phi \left( x\right) =\sum_{x\in
D}\rho \left( x\right) \phi \left( x\right) ,
\end{eqnarray*}%
where the first and the last equality follows from the fact that $\rho $ is
supported in $D^{r}$. On $\mathcal{C}^{c}$ we apply Holder inequality to
obtain%
\begin{eqnarray}
&&\mathbb{E}^{D,f}\left[ \exp \left( R^{-1}\sum_{x\in D}\rho \left( x\right)
\phi ^{f}\left( x\right) \right) 1_{\mathcal{G}\cap \mathcal{C}^{c}}\right] 
\notag \\
&\leq &\mathbb{P}\left( \mathcal{C}^{c}\right) ^{1/2}\mathbb{E}^{D,f}\left[
\exp \left( 2R^{-1}\sum_{x\in D}\rho \left( x\right) \phi ^{f}\left(
x\right) \right) \right] ^{1/2}  \notag \\
&\leq &R^{-\delta _{0}/2}\mathbb{E}^{D,f}\left[ \exp \left(
2R^{-1}\sum_{x\in D}\rho \left( x\right) \phi ^{f}\left( x\right) -\mathbb{E}%
^{D,f}\left[ 2R^{-1}\sum_{x\in D}\rho \left( x\right) \phi ^{f}\left(
x\right) \right] \right) \right] ^{1/2}  \notag \\
&&\times \exp \left( \mathbb{E}^{D,f}\left[ R^{-1}\sum_{x\in D}\rho \left(
x\right) \phi ^{f}\left( x\right) \right] \right) .  \label{err}
\end{eqnarray}%
By the Brascamp-Lieb inequality (\ref{eq: BL exp mom bound}), there exist
some $c\,<\infty $, such that 
\begin{eqnarray}
&&\mathbb{E}^{D,f}\left[ \exp \left( 2R^{-1}\sum_{x\in D}\rho \left(
x\right) \phi ^{f}\left( x\right) -\mathbb{E}^{D,f}\left[ 2R^{-1}\sum_{x\in
D}\rho \left( x\right) \phi ^{f}\left( x\right) \right] \right) \right] 
\notag \\
&\leq &\exp \left( c\text{Var}_{DGFF}^{D,f}\left( R^{-1}\sum_{x\in D}\rho
\left( x\right) \phi ^{f}\left( x\right) \right) \right) .  \label{errbl}
\end{eqnarray}%
On the other hand, applying Theorem \ref{mean} yields%
\begin{eqnarray}
\left\vert \mathbb{E}^{D,f}\left[ R^{-1}\sum_{x\in D}\rho \left( x\right)
\phi ^{f}\left( x\right) \right] \right\vert &=&\left\vert \mathbb{E}^{D,f}%
\left[ R^{-1}\sum_{x\in D}\rho \left( x\right) \phi ^{f}\left( x\right) %
\right] -R^{-1}\sum_{x\in D}\rho \left( x\right) \hat{\phi}\left( x\right)
\right\vert  \notag \\
&\leq &\left\Vert \mathbb{E}^{D,f}\phi ^{f}-\hat{\phi}\right\Vert
_{L^{\infty }\left( D^{r}\right) }\frac{1}{R}\sum_{x\in D}\left\vert \rho
\left( x\right) \right\vert \leq CR^{-\delta _{0}}.  \label{errmean}
\end{eqnarray}%
Combining (\ref{err}), (\ref{errbl}) and (\ref{errmean}), we have for $R$
large enough, 
\begin{equation*}
\mathbb{E}^{D,f}\left[ \exp \left( R^{-1}\sum_{x\in D}\rho \left( x\right)
\phi ^{f}\left( x\right) \right) 1_{\mathcal{G}\cap \mathcal{C}^{c}}\right]
\leq C\exp \left( c\text{Var}_{DGFF}\left( R^{-1}\sum_{x\in D}\rho \left(
x\right) \phi ^{f}\left( x\right) \right) \right) R^{-\delta _{0}/2}.
\end{equation*}%
And similarly,%
\begin{equation*}
\mathbb{E}^{D,0}\left[ \exp \left( R^{-1}\sum_{x\in D}\rho \left( x\right)
\phi \left( x\right) \right) 1_{\mathcal{G}\cap \mathcal{C}^{c}}\right] \leq
C\exp \left( c\text{Var}_{DGFF}\left( R^{-1}\sum_{x\in D}\rho \left(
x\right) \phi \left( x\right) \right) \right) R^{-\delta _{0}/2}.
\end{equation*}%
Since the variance of linear functionals of Gaussian free field does not
depend on boundary conditions, we finish the proof.
\end{proof}

\subsection{Central limit theorem}

We now state the central limit theorem for macroscopic averages of $\phi $,
proved in \cite{M} as a consequence of Theorem A\ in \cite{NS} and Theorem %
\ref{decouple} stated above.

Let $D\subset \mathbb{R}^{2}$ be a smooth simply connected domain. Before
stating the central limit theorem, we give the definition of the (continuum) 
$a$-Gaussian Free Field ($a$-GFF) $h$ in $D$ with zero boundary condition,
where $a$ is a $2\times 2$ positive definite matrix. The $a$-GFF in $D$ is the standard
Gaussian in $H_{0}^{1}\left( D\right) $, such that for any $f\in
H_{0}^{1}\left( D\right) $, $\int_{D}\nabla h \cdot \nabla f$ is a Gaussian random
variable with mean $0$ and variance $\int_{D}\nabla f\cdot a\nabla f$ $.$

\begin{theorem}
\label{thm: miller CLT} Let $D\subset \mathbb{R}^{2}$ be a smooth simply
connected domain, $D_{N}=D\cap \frac{1}{N}\mathbb{Z}^{2}$ and $\phi $ be
sampled from the Ginzburg-Landau measure on $D_{N}$ with zero boundary
condition. Suppose that the sequence of functions $\rho
_{N}:D_{N}\rightarrow \mathbb{R}$ satisfies 
\begin{equation}
\sum_{x\in D_{N}}\rho _{N}\left( x\right) H\left( x\right) =0\text{, for any
harmonic function }H:D_{N}\rightarrow \mathbb{R}.  \label{harmtest}
\end{equation}%
Also assume there exist some $\rho \in C_{0}^{\infty }\left( D\right) $ and $%
C<\infty $, such that 
\begin{equation}
\int_{D}\rho \left( x\right) H\left( x\right) dx=0\text{, for any harmonic
function }H:D\rightarrow \mathbb{R}, \label{harmtestcont}
\end{equation}%
and $\left\Vert \rho _{N}-\rho \right\Vert _{L^{\infty }\left( D\right)
}\leq C/N$. Then the linear functional%
\begin{equation*}
N^{-1}\sum_{x\in D_{N}}\rho _{N}\left( x\right) \phi \left( x\right)
\end{equation*}%
converges in $L^{2k}$, $k\in \mathbb{N}$, to the random variable 
\begin{equation*}
\int_{D}h\left( x\right) \rho \left( x\right) dx,
\end{equation*}%
where $h$ is the $a$-GFF on $D$ with zero boundary condition, for some $%
a\left( V\right) =\bar{a}\left( V\right) I$ that satisfies $c_{-}\leq \bar{a}%
\leq c_{+}$.
\end{theorem}

\begin{proof}
If $\rho _{N}=\rho $ for all $N\geq 1$, this is a consequence of Theorem 1.1
in \cite{M}, which was proved by combining the CLT for the infinite volume
gradient Gibbs measure (Theorem A\ in \cite{NS}), with the approximate
harmonic coupling (Theorem \ref{decouple}). Below we give a brief
explanation that when $\rho _{N}\rightarrow \rho $ with an algebraic rate,
the CLT for both the infinite volume gradient Gibbs measure (Theorem A\ in 
\cite{NS}) and the finite volume one (Theorem 1.1 in \cite{M}) still hold.

We first sketch the modification for Theorem A\ in \cite{NS}. The theorem
states that for the infinite volume field $\phi ^{0}$, and any $\rho \in
C_{0}^{\infty }\left( D\right) $ such that $\rho =\Delta g$ for some $g\in
H^{1}\left( D\right) $, $N^{-1}\sum_{x\in D_{N}}\rho \left( x\right) \phi^0
\left( x\right) $ converges in $L^{2k}$ to the random variable $%
\int_{D}h\left( x\right) \rho \left( x\right) dx=\int_{D}\nabla h\left(
x\right) \nabla g\left( x\right) dx$, where $h$ is the $a$-GFF in $\mathbb{R}%
^{2}$, for some $a = a(V)$. It suffices to show that 
\begin{equation}
\left\vert \text{Var}\left[ N^{-1}\sum_{x\in D_{N}}\rho _{N}\left( x\right)
\phi ^{0}\left( x\right) \right] -\text{Var}\left[ N^{-1}\sum_{x\in
D_{N}}\rho \left( x\right) \phi ^{0}\left( x\right) \right] \right\vert
\rightarrow 0\text{ \ \ as \ }N\rightarrow \infty .  \label{varapprox}
\end{equation}%
(the deviation in higher moments $k\geq 2$ can then be controlled by the
exponential Brascamp-Lieb inequality (\ref{eq: BL exp mom bound})). Arguing
as in \cite{NS}, let $H^{1}\left( \mathbb{Z}^{2},\mu \right) $
denote the function space consists of $v:\mathbb{Z}^{2}\rightarrow \mathbb{R%
}$ such that 
\begin{equation*}
\mathbb{E}\left[ \frac{1}{N^{2}}\sum_{e\in \frac{1}{N}\mathbb{Z}^{2}}\left(
\nabla v\left( e\right) \right) ^{2}+\frac{1}{N^{2}}\sum_{x,y\in \frac{1}{N}%
\mathbb{Z}^{2}}\left( \frac{\partial }{\partial \phi \left( y\right) }%
v\left( x\right) \right) ^{2}\right] <\infty .
\end{equation*}
Consider the solutions $u_{N},u\in H^{1}\left( \mathbb{Z}%
^{2},\mu \right) $ to the elliptic PDEs in $\mathbb{Z}^{2}$:%
\begin{eqnarray}
-\Delta _{\phi }u_{N}+\nabla ^{\ast } \cdot V^{\prime \prime }\left( \nabla \phi
^{0}\right) \nabla u_{N} &=&\frac{1}{N}\rho _{N},  \label{HSn} \\
-\Delta _{\phi }u+\nabla ^{\ast } \cdot V^{\prime \prime }\left( \nabla \phi
^{0}\right) \nabla u &=&\frac{1}{N}\rho ,  \label{HS}
\end{eqnarray}%
with $\Delta _{\phi }=\sum_{x\in \mathbb{Z}^{2}}\left[ \frac{\partial ^{2}}{%
\partial \phi \left( x\right) ^{2}}-\sum_{y\sim x}V^{\prime }\left( \phi
\left( y\right) -\phi \left( x\right) \right) \frac{\partial }{\partial \phi
\left( x\right) }\right] $. 
For $u,v\in H^{1}\left( \mathbb{Z}^{2},\mu \right) $, define the bilinear
form 
\begin{equation*}
B\left[ u,v\right] =\mathbb{E}\left[ \sum_{x,y\in \frac{1}{N}\mathbb{Z}^{2}}%
\frac{\partial }{\partial \phi \left( y\right) }u\left( x\right) \frac{%
\partial }{\partial \phi \left( y\right) }v\left( x\right) +\sum_{e\in \frac{%
1}{N}\mathbb{Z}^{2}}\nabla u\left( e\right) V^{\prime \prime }\left( \nabla
\phi ^{0}\left( e\right) \right) \nabla v\left( e\right) \right] .
\end{equation*}%
It is observed in \cite{NS} that equations \eqref{HSn} and \eqref{HS} are well-posed, and we may represent the variance in terms of
the bilinear forms as 
\begin{eqnarray*}
\text{Var}\left[ N^{-1}\sum_{x\in D_{N}}\rho _{N}\left( x\right) \phi
^{0}\left( x\right) \right] &=&B\left[ u_{N},\frac{1}{N}\rho _{N}\right] \\
\text{Var}\left[ N^{-1}\sum_{x\in D_{N}}\rho \left( x\right) \phi ^{0}\left(
x\right) \right] &=&B\left[ u,\frac{1}{N}\rho \right] .
\end{eqnarray*}%
Therefore we may reduce the proof of (\ref{varapprox}) to standard energy
comparison for the equations (\ref{HSn}) and (\ref{HS}). Indeed, using%
\begin{eqnarray*}
B\left[ u_{N},\frac{1}{N}\rho _{N}\right] -B\left[ u,\frac{1}{N}\rho \right]
&=&B\left[ u_{N}-u,\frac{1}{N}\rho _{N}-\frac{1}{N}\rho \right] \\
&=&\mathbb{E}\left[ \frac{1}{N}\sum_{x\in D_{N}}\left( \rho _{N}\left(
x\right) -\rho \left( x\right) \right) \left( u_{N}\left( x\right) -u\left(
x\right) \right) \right]
\end{eqnarray*}%
Notice that (\ref{harmtest}) and \eqref{harmtestcont} implies we can write $\frac{1}{N}\left( \rho
_{N}-\rho \right) =\nabla g_{N}$ for some $g_{N}$, and $\left\Vert \rho
_{N}-\rho \right\Vert _{L^{\infty }\left( D\right) }\leq C/N$ suggests that one may take $g_N$ so that $%
\left\Vert g_{N}\right\Vert _{L^{\infty }\left( D_{N}\right) }\leq C/N$.
Also, subtracting (\ref{HSn}) from (\ref{HS}) and testing the new equation
with $u_{N}-u$ yields%
\begin{equation}
\sum_{e\in D_{N}}(\nabla u_{N}\left( e\right) -\nabla u\left( e\right)
)^{2}\leq c_{-}^{-1}\frac{1}{N^{2}}\sum_{x\in D_{N}}\left( \rho _{N}\left(
x\right) -\rho \left( x\right) \right) ^{2}.  \label{LM}
\end{equation}%
Therefore 
\begin{eqnarray*}
&&\left\vert \mathbb{E}\left[ \frac{1}{N}\sum_{x\in D_{N}}\left( \rho
_{N}\left( x\right) -\rho \left( x\right) \right) \left( u_{N}\left(
x\right) -u\left( x\right) \right) \right] \right\vert \\
&=&\left\vert \mathbb{E}\left[ \sum_{e\in D_{N}}g_{N}\left( e\right) (\nabla
u_{N}\left( e\right) -\nabla u\left( e\right) )\right] \right\vert \\
&\leq &\mathbb{E}\left[ \sum_{e\in D_{N}}\left( g_{N}\left( e\right) \right)
^{2}\right] ^{1/2}\left\vert \mathbb{E}\left[ c_{-}^{-1}\frac{1}{N^{2}}%
\sum_{x\in D_{N}}\left( \rho _{N}\left( x\right) -\rho \left( x\right)
\right) ^{2}\right] ^{1/2}\right\vert \\
&\leq &C_{2}N^{-1}\text{, \ for some }C_{2}<\infty \text{.}
\end{eqnarray*}%
Where the last two inequalities follows from (\ref{LM}) and the rate of
convergence for $\rho _{N}$. This concludes the $L^2$ convergence of $%
N^{-1}\sum_{x\in D_{N}}\rho _{N}\left( x\right) \phi ^{0}\left( x\right) $,
to $\int_D h(x) \rho(x) \,dx$. 

The conclusion of the theorem then follows from combining the aforementioned convergence for the 
infinite volume field, with the harmonic coupling Theorem \ref{decouple} (which gives $%
\sum_{x\in D_{N}}\rho _{N}\left( x\right) \phi ^{0}\left( x\right)
=\sum_{x\in D_{N}}\rho _{N}\left( x\right) \phi ^{D_{N},0}\left( x\right) $
on the good event). The argument was detailed out in \cite{M}, Section 7
(see also the proof of Lemma \ref{jasonerr} above).
\end{proof}

For the rest of the paper we will only apply the convergence of the second
moment (i.e., $k=1$ result) in Theorem \ref{thm: miller CLT}.

\section{Harmonic averages\label{harmonic}}

Our method to prove Proposition \ref{thm: tail bound} is built upon Theorem %
\ref{decouple} and a detailed study of the harmonic average of the
Ginzburg-Landau field. Given $B\subset \mathbb{Z}^{2}$, $v\in B$ and $y\in
\partial B$, we denote by $a_{B}\left( v,\cdot \right) $ the harmonic
measure on $\partial B$ seen from $v$. In other words, let $S^{x}$ denote
the simple random walk starting at $x$, and $\tau _{\partial B}=\inf \left\{
t>0:S\left[ t\right] \in \partial B\right\} $, we have 
\begin{equation*}
a_{B}\left( x,y\right) =\mathbb{P}\left( S^{x}\left[ \tau _{\partial B}%
\right] =y\right) .
\end{equation*}

Given $v\in \mathbb{Z}^{2}$ and $R>r>0$, let $B_{R}\left( v\right) =\left\{
y\in \mathbb{Z}^{2}:\left\vert v_{1}-y_{1}\right\vert ^{2}+\left\vert
v_{2}-y_{2}\right\vert ^{2}<R^{2}\right\} $, and $A_{r,R}\left( v\right)
:=B_{R}\left( v\right) \setminus B_{r}\left( v\right) $. Define the circle
average of the Ginzburg-Landau field with radius $R$ at $v$ by 
\begin{equation}
C_{R}\left( v,\phi \right) =\sum_{y\in \partial B_{R}\left( v\right)
}a_{B_{R}\left( v\right) }\left( v,y\right) \phi \left( y\right) .
\label{CR}
\end{equation}

For each $\varepsilon ,R>0$, such that $(1+\eps)R< \text{dist}(v,\partial D_N)$, we take a non-negative smooth radial function $%
f_{\varepsilon } \in C_c^\infty([1-\eps, 1+\eps])$ such that 
$f_{\varepsilon }\left( 1-s\right) =f_{\varepsilon }\left( 1+s\right) $ for $%
s\in \left[ 0,\varepsilon \right] $ and $\int_{1-\varepsilon
}^{1+\varepsilon }f_{\varepsilon }\left( s\right) ds=1$. We further define%
\begin{equation}
X_{R}\left( v,\phi \right) =\sum_{r=\left( 1-\varepsilon \right) R}^{\left(
1+\varepsilon \right) R}f_{\varepsilon }\left( r/R\right) C_{r}\left( v,\phi
\right) .  \label{X}
\end{equation}

The crucial object that we use below is the increment of the harmonic
average process $X$. For $v\in D_{N}$, $\left( 1+\varepsilon \right) ^{-1}$%
dist$\left( v,\partial D_{N}\right) >R_{1}>R_{2}>0$, we would like to study
the increment 
\begin{equation*}
X_{R_{2}}\left( v,\phi \right) -X_{R_{1}}\left( v,\phi \right) =\left(
\sum_{r=\left( 1-\varepsilon \right) R_{2}}^{\left( 1+\varepsilon \right)
R_{2}}f_{\varepsilon }\left( r/R_{2}\right) -\sum_{r=\left( 1-\varepsilon
\right) R_{1}}^{\left( 1+\varepsilon \right) R_{1}}f_{\varepsilon }\left(
r/R_{1}\right) \right) \sum_{y\in \partial B_{r}\left( v\right)
}a_{B_{r}\left( v\right) }\left( v,y\right) \phi \left( y\right) .
\end{equation*}%
This can be written as $\sum_{y\in D_{N}}\rho_N \left( v,y\right) \phi \left(
y\right) $, where we define%
\begin{equation*}
\rho_N \left( v,y\right) =\left[ f_{\varepsilon }\left( \frac{\left\vert
v-y\right\vert }{R_{2}}\right) -f_{\varepsilon }\left( \frac{\left\vert
v-y\right\vert }{R_{1}}\right) \right] a_{B_{\left\vert v-y\right\vert
}\left( v\right) }\left( v,y\right) .
\end{equation*}%
The definition of $\rho_N $ depends on $N, R_1, R_2$, as the
definition of the harmonic measure depends on the lattice spacing. We will
omit the $R_1, R_2$ dependence for the simplicity of notations, and also the $v$ dependence if it is clear from the context. 

\begin{lemma}
\label{lem:harmtest}For any discrete harmonic function $h$ in $D_{N}$, we
have $\sum_{y\in D_{N}}\rho_N \left( v,y\right) h\left( y\right) =0.$
\end{lemma}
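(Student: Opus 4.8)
The plan is to reduce the claim to the discrete mean value property for harmonic functions. First I would observe that $\rho$ does not depend on the field: by its very definition it is the (unique) signed function on $D_{N}$ representing the linear functional $\psi\mapsto X_{R_{2}}\left(v,\psi\right)-X_{R_{1}}\left(v,\psi\right)$, since each $X_{R_{i}}\left(v,\cdot\right)$ is linear in $\psi$. Hence it suffices to prove $X_{R_{2}}\left(v,h\right)=X_{R_{1}}\left(v,h\right)$ for every $h$ discrete harmonic in $D_{N}$, under the (implicit) assumption that all the balls $B_{r}\left(v\right)$ entering the definitions of $X_{R_{1}}$ and $X_{R_{2}}$ are contained in $D_{N}$.

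The key step is the identity $C_{r}\left(v,h\right)=h\left(v\right)$. I would prove it by optional stopping: let $S^{v}$ be the simple random walk started at $v$ and $\tau=\tau_{\partial B_{r}\left(v\right)}$ its exit time from $B_{r}\left(v\right)$. Since $h$ is discrete harmonic on $B_{r}\left(v\right)\setminus\partial B_{r}\left(v\right)$, the stopped process $h\left(S^{v}\left[t\wedge\tau\right]\right)$ is a bounded martingale and $\tau<\infty$ almost surely because the ball is finite, so
\begin{equation*}
h\left(v\right)=\mathbb{E}\left[h\left(S^{v}\left[\tau\right]\right)\right]=\sum_{y\in\partial B_{r}\left(v\right)}a_{B_{r}\left(v\right)}\left(v,y\right)h\left(y\right)=C_{r}\left(v,h\right).
\end{equation*}
Then, using that $\sum_{r=\left(1-\varepsilon\right)R}^{\left(1+\varepsilon\right)R}f_{\varepsilon}^{R}\left(r\right)=1$, I obtain $X_{R}\left(v,h\right)=\sum_{r}f_{\varepsilon}^{R}\left(r\right)C_{r}\left(v,h\right)=h\left(v\right)$ for both $R=R_{1}$ and $R=R_{2}$, whence $X_{R_{2}}\left(v,h\right)-X_{R_{1}}\left(v,h\right)=h\left(v\right)-h\left(v\right)=0$, and therefore $\sum_{y\in D_{N}}\rho\left(y\right)h\left(y\right)=0$, as claimed.

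There is essentially no real obstacle in this argument. The only points requiring a word of care are that the balls $B_{r}\left(v\right)$ appearing in the increment lie inside $D_{N}$, so that $h$ is genuinely harmonic on their interiors and the mean value property applies at every scale $r$, and the routine justification of optional stopping (boundedness of $h$ on the finite set $B_{r}\left(v\right)$, together with $\mathbb{P}\left(\tau<\infty\right)=1$). If one prefers to avoid any standing assumption on the balls, one can state the identity only for those $v,R_{1},R_{2}$ for which every relevant circle average is supported in $D_{N}$, which is precisely the regime in which $\rho$ is used subsequently.
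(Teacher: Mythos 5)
Your proof is correct and takes essentially the same route as the paper: both reduce the claim to the mean value identity $C_{r}\left(v,h\right)=h\left(v\right)$ and then use the normalization $\sum_{r}f_{\varepsilon}^{R}\left(r\right)=1$ to conclude $X_{R_{1}}\left(v,h\right)=X_{R_{2}}\left(v,h\right)=h\left(v\right)$. The only cosmetic difference is that you justify the mean value property by optional stopping, whereas the paper writes $h$ as the harmonic extension of its boundary values and invokes the tower property $\sum_{y\in\partial B_{r}\left(v\right)}a_{B_{r}\left(v\right)}\left(v,y\right)a\left(y,z\right)=a\left(v,z\right)$ — which is the same strong Markov property in different clothing.
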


\begin{proof}
Suppose $h$ is define up to $\partial D_{N}$, and $h|_{\partial D_{N}}=H$.
We conclude the proof by showing for $i=1,2$%
\begin{equation*}
\sum_{r=\left( 1-\varepsilon \right) R_{i}}^{\left( 1+\varepsilon \right)
R_{i}}f_{\varepsilon }\left( r/R_{i}\right) \sum_{y\in \partial B_{r}\left(
v\right) }a_{B_{r}\left( v\right) }\left( v,y\right) h\left( y\right)
=h\left( v\right) .
\end{equation*}%
Indeed, since $h$ is harmonic,%
\begin{equation*}
h\left( y\right) =\sum_{z\in \partial D_{N}}a_{D_{N}}\left( y,z\right)
H\left( z\right) .
\end{equation*}%
Using the fact that 
\begin{equation*}
\sum_{y\in \partial B_{r}\left( v\right) }a_{B_{r}\left( v\right) }\left(
v,y\right) a_{D_{N}}\left( y,z\right) =a_{D_{N}}\left( v,z\right) ,
\end{equation*}%
we obtain%
\begin{equation*}
\sum_{r=\left( 1-\varepsilon \right) R_{i}}^{\left( 1+\varepsilon \right)
R_{i}}f_{\varepsilon }\left( r/R_{i}\right) \sum_{y\in \partial B_{r}\left(
v\right) }a_{B_{r}\left( v\right) }\left( v,y\right) h\left( y\right)
=\sum_{r=\left( 1-\varepsilon \right) R_{i}}^{\left( 1+\varepsilon \right)
R_{i}}f_{\varepsilon }\left( r/R_{i}\right) \sum_{z\in \partial
D_{N}}a_{D_{N}}\left( v,z\right) H\left( z\right) =h\left( v\right) .
\end{equation*}
\end{proof}

The following result is a consequence of Theorem \ref{decouple} and the
lemma above.

\begin{lemma}
\label{average}Suppose the same conditions in Theorem \ref{decouple} holds.
Given $v\in D_{N}$, $R_{1}>R_{2}>0$, $\varepsilon >0$ such that $\left(
1+2\varepsilon \right) R_{1}<$dist$\left( v,\partial D_{N}\right) $, $\left(
1+2\varepsilon \right) R_{2}<\left( 1-2\varepsilon \right) R_{1}$. Let $%
\delta $ be the constant from Theorem \ref{decouple}. Let $\phi ^{f}$ be
sampled from Ginzburg-Landau field (\ref{GLD}), and $\phi ^{0}$ be sampled
from the zero boundary Ginzburg-Landau field on $D_{N}$. Then, on an event
with probability $1-O\left( R_{1}^{-\delta }\right) $, we have 
\begin{equation*}
X_{R_{2}}\left( v,\phi ^{f}\right) -X_{R_{1}}\left( v,\phi ^{f}\right)
=X_{R_{2}}\left( v,\phi ^{0}\right) -X_{R_{1}}\left( v,\phi ^{0}\right) .
\end{equation*}
\end{lemma}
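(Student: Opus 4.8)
The plan is to express the increment as a linear functional $\sum_{y}\rho (y)\phi (y)$ that kills every discrete harmonic function on a suitable ball, and then to build a coupling of $\phi ^{f}$ and $\phi ^{0}$ in which $\phi ^{f}-\phi ^{0}$ coincides with such a harmonic function on that ball with probability $1-O(R_{1}^{-\delta })$. Recall from the computation preceding the statement that $X_{R_{2}}(v,\phi )-X_{R_{1}}(v,\phi )=\sum_{y}\rho (y)\phi (y)$, where $\rho $ is supported on $\bigcup_{r}\partial B_{r}(v)$ with $r$ ranging over $[(1-\varepsilon )R_{1},(1+\varepsilon )R_{1}]\cup [(1-\varepsilon )R_{2},(1+\varepsilon )R_{2}]$, hence within $\ell ^{1}$-distance $(1+\varepsilon )R_{1}+O(1)$ of $v$. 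By the mean value property of discrete harmonic functions -- that is, $C_{r}(v,h)=h(v)$ whenever $h$ is discrete harmonic on $B_{r}(v)$, the identity underlying the proof of the previous lemma -- one gets $X_{R}(v,h)=h(v)$ for every $h$ discrete harmonic on $\overline{B_{(1+\varepsilon )R}(v)}$, and since $R_{2}<R_{1}$ this gives $\sum_{y}\rho (y)h(y)=0$ for every $h$ discrete harmonic on $\overline{B_{(1+\varepsilon )R_{1}}(v)}$. So it suffices to couple $\phi ^{f}$ and $\phi ^{0}$ so that $\phi ^{f}-\phi ^{0}$ agrees on $\overline{B_{(1+\varepsilon )R_{1}}(v)}$ with a discrete harmonic function off an event of probability $O(R_{1}^{-\delta })$.

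I would localize to $D:=B_{(1+2\varepsilon )R_{1}}(v)$, which lies in $D_{N}$ by the hypothesis $(1+2\varepsilon )R_{1}<\dist (v,\partial D_{N})$ and has diameter $\widetilde{R}\asymp R_{1}$. Condition on the boundary data $g^{f}:=\phi ^{f}|_{\partial D}$ and $g^{0}:=\phi ^{0}|_{\partial D}$; by the domain Markov property (Section \ref{sec:hc}), $\phi ^{f}|_{D}$ given $g^{f}$ (resp. $\phi ^{0}|_{D}$ given $g^{0}$) is a Ginzburg--Landau field on $D$ with that boundary data, independent of the configuration outside $D$. By the Brascamp--Lieb fluctuation bound (as in Lemma \ref{lem: BL tail bound}), together with a priori control of the mean (which is $O(|\log N|^{\Lambda })$ since $\max_{\partial D_{N}}|f|\leq \Lambda |\log N|^{\Lambda }$), and a union bound over the $O(R_{1})$ sites of $\partial D$, both $g^{f}$ and $g^{0}$ satisfy $\max_{\partial D}|g|\leq \Lambda ^{\prime }|\log \widetilde{R}|^{\Lambda ^{\prime }}$ off an event of probability $\leq R_{1}^{-\delta }$; here one uses that $R_{1}$ is at least a fixed positive power of $N$ (as the standing hypotheses force), so that a power of $\log \widetilde{R}$ dominates a power of $\log N$. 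Fix $r^{\ast }$ with $c\widetilde{R}^{1-\gamma }<r^{\ast }<\varepsilon R_{1}$ (possible for $R_{1}$ large since $\gamma >0$); then $\overline{B_{(1+\varepsilon )R_{1}}(v)}\subset D(r^{\ast })$ and the hypotheses of Theorem \ref{decouple} hold on $D$ with inner radius $r^{\ast }$ and boundary data $g^{f}$ or $g^{0}$.

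Now apply Theorem \ref{decouple} twice, using one and the same Ginzburg--Landau field $\psi $ on $D$ with zero boundary condition as a common reference. It furnishes a coupling of $(\psi ,\phi ^{f}|_{D})$ under which $\phi ^{f}|_{D}-\psi $ is discrete harmonic on $D(r^{\ast })$ off an event of probability $\leq c(\Lambda ^{\prime })\widetilde{R}^{-\delta }$, and likewise a coupling of $(\psi ,\phi ^{0}|_{D})$. Gluing these over the common $\psi $ -- sample $\psi $, then draw $\phi ^{f}|_{D}$ and $\phi ^{0}|_{D}$ conditionally independently from their respective conditional laws given $\psi $ -- produces a coupling of $(\psi ,\phi ^{f}|_{D},\phi ^{0}|_{D})$ with the correct marginals, under which, off an event of probability $\leq 2c(\Lambda ^{\prime })\widetilde{R}^{-\delta }$, the difference $\phi ^{f}|_{D}-\phi ^{0}|_{D}=(\phi ^{f}|_{D}-\psi )-(\phi ^{0}|_{D}-\psi )$ is discrete harmonic on $D(r^{\ast })$. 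Reattaching the configurations outside $D$ according to their conditional Ginzburg--Landau laws given $g^{f}$ resp. $g^{0}$, and invoking the domain Markov property once more, we obtain a coupling of $\phi ^{f}$ sampled from the Ginzburg--Landau measure on $D_{N}$ with boundary condition $f$ and $\phi ^{0}$ sampled from the zero-boundary measure (\ref{GL}). On the intersection of the good events, which has probability $1-O(R_{1}^{-\delta })$ because $\widetilde{R}\asymp R_{1}$, the field $\phi ^{f}-\phi ^{0}$ agrees on $D(r^{\ast })\supset \overline{B_{(1+\varepsilon )R_{1}}(v)}$ with a discrete harmonic function, so, since $\rho $ is supported there and annihilates discrete harmonic functions on $\overline{B_{(1+\varepsilon )R_{1}}(v)}$,
\begin{equation*}
\big( X_{R_{2}}(v,\phi ^{f})-X_{R_{1}}(v,\phi ^{f})\big) -\big( X_{R_{2}}(v,\phi ^{0})-X_{R_{1}}(v,\phi ^{0})\big) =\sum_{y}\rho (y)\big( \phi ^{f}(y)-\phi ^{0}(y)\big) =0,
\end{equation*}
which is the assertion.

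The step I expect to be the main obstacle is the construction of this joint coupling: Theorem \ref{decouple} compares a field only to the \emph{zero}-boundary field on $D$, so a comparison between $\phi ^{f}$ and $\phi ^{0}$ must be routed through the common reference $\psi $, which is precisely why the domain Markov reduction to the ball $D=B_{(1+2\varepsilon )R_{1}}(v)$ and the conditional-independence gluing are needed, and one must verify that the glued measure keeps the correct marginals. A secondary technical point is the a priori bound on $g^{f},g^{0}$ along $\partial D$: Brascamp--Lieb gives only Gaussian tails with variance $O(\log N)$, which suffices to place $\max_{\partial D}|g|$ below the polylog threshold $\Lambda ^{\prime }|\log \widetilde{R}|^{\Lambda ^{\prime }}$ with overwhelming probability only in the regime $R_{1}\geq N^{\gamma _{1}}$, which is the only regime in which the lemma is applied.
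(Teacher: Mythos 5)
Your proof is correct, and its two pillars --- writing the increment as $\sum_{y}\rho(y)\phi(y)$ with $\rho$ annihilating discrete harmonic functions, and using Theorem \ref{decouple} to make $\phi^{f}-\phi^{0}$ discrete harmonic on the support of $\rho$ off an event of probability $O(R_{1}^{-\delta})$ --- are exactly the two ingredients the paper relies on (it offers no further proof, presenting the lemma as an immediate consequence of Theorem \ref{decouple} and the preceding lemma). Where you diverge is in how the coupling is built. The paper's intended one-step argument applies Theorem \ref{decouple} once, directly to the pair $(\phi^{0},\phi^{f})$; this is legitimate in every place the lemma is actually used, because there the two fields live on a common domain of diameter $\asymp R_{1}$ (a ball $B_{r_{k-1}}(v)$ arising from conditioning on $\mathcal{F}_{k-1}$, or a tile $D_{i}$ in the bootstrapping) and one of them genuinely carries the zero boundary condition on that domain. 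You instead read the statement literally, with $\phi^{0}$ defined on all of $D_{N}$, in which case a single application of Theorem \ref{decouple} on $D_{N}$ would only make the difference harmonic on $D_{N}(r)$ for $r>cN^{1-\gamma}$, a region that need not contain the support of $\rho$ under the stated hypotheses; your remedy --- localize to $B_{(1+2\varepsilon)R_{1}}(v)$ via the Markov property, control both random boundary data by Brascamp--Lieb, apply Theorem \ref{decouple} twice against a common zero-boundary reference $\psi$, and glue the two couplings by conditional independence given $\psi$ --- is sound and repairs this imprecision, at the cost of a factor $2$ in the error and the (correctly flagged) standing assumption $R_{1}\geq N^{\gamma_{1}}$ needed to place the random boundary data below the polylogarithmic threshold of Theorem \ref{decouple}. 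In short, your argument is a more careful and slightly more general version of the paper's; nothing in it is wrong.
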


We sometimes omit the dependence of $X$ on $v$ and $\phi $ when it is clear
from the context.

We are mostly concerned with large deviation estimates, and therefore with
moment generating functions. Thus we will use Proposition \ref{mgfCLT}
below, which gives a Gaussian limit of the moment generating function of
macroscopic observables.

Now fix $v=0$. Given $\varepsilon >0$ fixed in the definition (\ref{X}) and $r>0$, take $%
\varepsilon _{1}=\varepsilon ^{1/4}$, and note that we can write 
\begin{eqnarray*}
X_{\left( 1+\varepsilon _{1}\right) r}\left(0,\phi \right) &=&\sum_{y\in
D_{N}}\rho _{r,+}\left( y\right) \phi \left( y\right) , \\
X_{\left( 1-\varepsilon _{1}\right) r}\left( 0,\phi \right) &=&\sum_{y\in
D_{N}}\rho _{r,-}\left( y\right) \phi \left( y\right) .
\end{eqnarray*}%
Let $A_{r_{1},r_{2}} =B_{r_{2}}\left( 0\right)
\setminus B_{r_{1}}\left( 0\right) $. Note that $\rho _{r,+}$ and $\rho
_{r,-}$ are supported on annuli $A_{\left( 1+\varepsilon _{1}-\varepsilon
\right) r,\left( 1+\varepsilon _{1}+\varepsilon \right) r}$ and $A_{\left(
1-\varepsilon _{1}-\varepsilon \right) r,\left( 1-\varepsilon
_{1}+\varepsilon \right) r}$ respectively, and let $\rho_r =\rho _{r,+}-\rho
_{r,-}$. We further notice that as $r\rightarrow \infty $, the rescaled
harmonic measure 
\begin{equation*}
ra_{B_{r}\left( 0\right) }\left( 0,\cdot \right) \rightarrow 1/2\pi .
\end{equation*}%
Thus as $r\rightarrow \infty $ and $y/r\rightarrow x$, $r\rho _{r,+},r\rho
_{r,-}$ converge respectively to the smooth functions 
\begin{eqnarray*}
f^{+}\left( x\right) &=&\frac{1}{2\pi \left\vert x\right\vert }%
f_{\varepsilon }\left( \left\vert x\right\vert \right) ,\text{ }x\in
A_{\left( 1+\varepsilon _{1}-\varepsilon \right) ,\left( 1+\varepsilon
_{1}+\varepsilon \right) }, \\
f^{-}\left( x\right) &=&\frac{1}{2\pi \left\vert x\right\vert }%
f_{\varepsilon }\left( \left\vert x\right\vert \right) ,\text{ }x\in
A_{\left( 1-\varepsilon _{1}-\varepsilon \right) ,\left( 1-\varepsilon
_{1}+\varepsilon \right) }.
\end{eqnarray*}%
To simplify the notations below, we write $A_{+}:=A_{\left( 1+\varepsilon
_{1}-\varepsilon \right) ,\left( 1+\varepsilon _{1}+\varepsilon \right) }$
and $A_{-}:=A_{\left( 1-\varepsilon _{1}-\varepsilon \right) ,\left(
1-\varepsilon _{1}+\varepsilon \right) }$. We further denote $f=f^{+}-f^{-}$%
. The following estimate is proved by combining Theorem \ref{thm: miller CLT}
with the Brascamp-Lieb inequality.

\begin{proposition}
\label{mgfCLT}Let $D=\left[ -1,1\right] ^{2}$and $v=0$. For any $\varepsilon
_{1}>0$ small enough, $t>0$ and $r=r\left( N\right) $ such that $N/4<\left(
1-\varepsilon _{1}\right) r<\left( 1+\varepsilon _{1}\right) r<N$, we have
that 
\begin{eqnarray}
&&\log \mathbb{E}^{D_{N},0}\left[ \exp \left( t\left( X_{\left(
1-\varepsilon _{1}\right) r}-X_{\left( 1+\varepsilon _{1}\right) r}\right)
\right) \right]  \notag \\
&=&\frac{t^{2}}{2}\int_{A_{+}\cup A_{-}}f\left( x\right) g_{a,D}\left(
x,y\right) f\left( y\right) dxdy+f_{1}\left( \varepsilon _{1},r\right)
t^{2}+f_{2}\left( \varepsilon _{1}\right) t^{4},  \label{logmomint}
\end{eqnarray}%
where $a\left( V\right) =\bar{a}\left( V\right) I$ is defined in Theorem \ref%
{thm: miller CLT}, $g_{a,D}\left( x,\cdot \right) $ is the Dirichlet Green's
function that solves the PDE%
\begin{equation*}
\left\{ 
\begin{array}{cc}
\nabla ^{\ast }\cdot a\nabla u=\delta \left( x\right) & \text{ in }D \\ 
u=0 & \text{ on }\partial D%
\end{array}%
\right. .
\end{equation*}%
And there exists $C<\infty $, such that $\left\vert f_{2}\left( \varepsilon
_{1}\right) \right\vert \leq C\varepsilon _{1}^{2}$, and $f_{1}\left(
\varepsilon _{1},r\right) /\varepsilon _{1}\rightarrow 0$ as $N\rightarrow
\infty $. Moreover, there exists $g=g\left( V\right) $, such that 
\begin{equation}
\log \mathbb{E}^{D_{N},0}\left[ \exp \left( t\left( X_{\left( 1-\varepsilon
_{1}\right) r}-X_{\left( 1+\varepsilon _{1}\right) r}\right) \right) \right]
=\frac{t^{2}}{2}g\log \frac{1+\varepsilon _{1}}{1-\varepsilon _{1}}+\hat{f}%
_{1}\left( \varepsilon _{1},r\right) t^{2}+f_{2}\left( \varepsilon
_{1}\right) \left( t^{2}+t^{4}\right) ,  \label{logmomratio}
\end{equation}%
where $\hat{f}_{1}\left( \varepsilon _{1},r\right) /\varepsilon
_{1}\rightarrow 0$ as $N\rightarrow \infty $.
\end{proposition}

\begin{proof}
We first show 
\begin{equation}
\mathbb{E}^{D_{N},0}\left[ \exp \left( t\left( X_{\left( 1-\varepsilon
_{1}\right) r}-X_{\left( 1+\varepsilon _{1}\right) r}\right) \right) \right]
=\frac{t^{2}}{2}\text{Var}^{D_{N},0}\left[ X_{\left( 1-\varepsilon
_{1}\right) r}-X_{\left( 1+\varepsilon _{1}\right) r}\right] +f_{2}\left(
\varepsilon _{1}\right) t^{4}.  \label{taylor}
\end{equation}%
Indeed, we can expand $\mathbb{E}^{D_{N},0}\left[ \exp \left( t\left(
X_{\left( 1-\varepsilon _{1}\right) r}-X_{\left( 1+\varepsilon _{1}\right)
r}\right) \right) \right] $ into Taylor series of $t$, and bound the higher
moments. Use the fact that the distribution of $\phi $ is symmetric, we can
write%
\begin{eqnarray*}
&&\mathbb{E}^{D_{N},0}\left[ \exp \left( t\left( X_{\left( 1-\varepsilon
_{1}\right) r}-X_{\left( 1+\varepsilon _{1}\right) r}\right) \right) \right]
\\
&=&\mathbb{E}^{D_{N},0}\left[ \exp \left( t\sum_{x\in D_{N}}\phi \left(
x\right) \rho_r \left( x\right) \right) \right] \\
&=&1+\frac{t^{2}}{2}\text{Var}^{D_{N},0}\left[ \sum_{x\in D_{N}}\phi \left(
x\right) \rho_r \left( x\right) \right] +\sum_{k=2}^{\infty }\frac{t^{2k}}{%
\left( 2k\right) !}\mathbb{E}^{D_{N},0}\left\vert \sum_{x\in D_{N}}\phi
\left( x\right) \rho_r \left( x\right) \right\vert ^{2k}.
\end{eqnarray*}%
We now claim 
\begin{equation}
\sum_{k=2}^{\infty }\frac{t^{2k}}{\left( 2k\right) !}\mathbb{E}%
^{D_{N},0}\left\vert \sum_{x\in D_{N}}\phi \left( x\right) \rho_r \left(
x\right) \right\vert ^{2k}=O\left( \varepsilon _{1}^{2}\right) t^{4}.
\label{high}
\end{equation}%
By the Brascamp-Lieb inequality for even moments 
\eqref{eq: BL even moment
bound}, we have 
\begin{equation}
\mathbb{E}^{D_{N},0}\left\vert \sum_{x\in D_{N}}\phi \left( x\right) \rho_r
\left( x\right) \right\vert ^{2k}\leq c_{-}^{-k}\mathbb{E}%
_{DGFF}^{D_{N,}0}\left\vert \sum_{x\in D_{N}}\phi \left( x\right) \rho_r
\left( x\right) \right\vert ^{2k}\leq \left( 2k-1\right)
!!c_{-}^{-k}\varepsilon _{1}^{2k}.  
\end{equation}%
By taking $\varepsilon _{1}$ small enough such that $\varepsilon _{1}t^{2}<1$%
, summing over $k$ yields (\ref{high}), and thus concludes (\ref{taylor}).

To prove (\ref{logmomint}), it suffices to obtain the asymptotic variance of 
$X_{\left( 1-\varepsilon _{1}\right) r}-X_{\left( 1+\varepsilon _{1}\right)
r}$. The Brascamp-Lieb inequality implies Var$^{D_{N},0}\left[ X_{\left(
1-\varepsilon _{1}\right) r}-X_{\left( 1+\varepsilon _{1}\right) r}\right]
\leq C\log \frac{1+\varepsilon _{1}}{1-\varepsilon _{1}}$ for all $N\geq 1$.
Notice that from Lemma \ref{lem:harmtest} and standard harmonic measure
estimates (see e.g., \cite{LL}) $\left\vert ra_{B_{r}\left( 0\right) }\left(
0,\cdot \right) -\frac{1}{2\pi }\right\vert =O\left( 1/r\right) $, we see
that the spatial average $\sum_{x\in D_{N}}\phi \left( x\right) \rho_r \left(
x\right) =r^{-1}\sum_{x\in D_{N}}\phi \left( x\right) r\rho_r \left( x\right) $%
\ satisfies the conditions of Theorem \ref{thm: miller CLT}. Apply Theorem %
\ref{thm: miller CLT}, and note that $r\rho_r \left( x\right) \rightarrow
f\left( x\right) $ as $r\rightarrow \infty $, we see that\ there exists a
positive definite $2\times 2$ matrix $a\left( V\right) =\bar{a}\left(
V\right) I$, such that 
\begin{gather*}
\text{Var}^{D_{N},0}\left[ X_{\left( 1-\varepsilon _{1}\right) r}-X_{\left(
1+\varepsilon _{1}\right) r}\right] =\text{Var}^{D_{N},0}\left[
r^{-1}\sum_{x\in D_{N}}\phi \left( x\right) r\rho_r \left( x\right) \right] \\
=\text{Var}_{a-GFF}^{D}\left[ \int_{A_{+}}f\left( x\right) h\left( x\right)
dx+\int_{A_{-}}f\left( x\right) h\left( x\right) dx\right] +f_{1}\left(
\varepsilon _{1},r\right) ,
\end{gather*}%
where $f_{1}\left( \varepsilon _{1},r\right) /\varepsilon _{1}\rightarrow 0$
as $N\rightarrow \infty $. Then the definition of the $a$-GFF implies 
\begin{equation}
\text{Var}_{a-GFF}^{D}\left[ \int_{A_{+}}f\left( x\right) h\left( x\right)
dx+\int_{A_{-}}f\left( x\right) h\left( x\right) dx\right] =\int_{A_{+}\cup
A_{-}}f\left( x\right) g_{a,D}\left( x,y\right) f\left( y\right) dxdy,
\label{computevar}
\end{equation}%
which concludes (\ref{logmomint}).

To obtain (\ref{logmomratio}), we further claim that there exists $g=g\left( 
\bar{a}\right) $, such that 
\begin{equation}
\text{Var}^{D_{N},0}\left[ X_{\left( 1-\varepsilon _{1}\right) r}-X_{\left(
1+\varepsilon _{1}\right) r}\right] =g\log \frac{1+\varepsilon _{1}}{%
1-\varepsilon _{1}}+\hat{f}_{1}\left( \varepsilon _{1},r\right) + O(\eps_1^2) ,  \label{varatio}
\end{equation}%
This can be proved by an explicit evaluation of the integral (\ref%
{computevar}). Instead, we give a proof here using comparison to the
standard discrete GFF. Since $g_{a,D}\left( x,y\right) =\bar{a}^{-1}\Delta
_{D}^{-1}\left( x,y\right) $, where $\Delta _{D}$ is the standard Dirichlet
Laplacian in $D$, we can conclude (\ref{varatio}) by showing 
\begin{equation}
\text{Var}_{DGFF}^{D_{N},0}\left[ X_{\left( 1-\varepsilon _{1}\right)
r}-X_{\left( 1+\varepsilon _{1}\right) r}\right] =\frac{2}{\pi }\log \frac{%
1+\varepsilon _{1}}{1-\varepsilon _{1}}+O\left( \varepsilon _{1}^{2}\right) ,
\label{gffdiff}
\end{equation}%
since the left side converge as $N\rightarrow \infty $ to $\int_{A_{+}\cup
A_{-}}f\left( x\right) \Delta _{D}^{-1}\left( x,y\right) f\left( y\right)
dxdy$, which only differs from (\ref{computevar}) by a multiplicative
constant. This then follows from an explicit computation: let $R=\left(
1+\varepsilon _{1}+\varepsilon _{1}^{4}\right) r$, using the Gibbs-Markov
property of discrete GFF, we have 
\begin{equation*}
\text{Var}_{DGFF}^{D_{N},0}\left[ X_{\left( 1-\varepsilon _{1}\right)
r}-X_{\left( 1+\varepsilon _{1}\right) r}\right] =\text{Var}_{DGFF}^{D_{R},0}%
\left[ X_{\left( 1-\varepsilon _{1}\right) r}-X_{\left( 1+\varepsilon
_{1}\right) r}\right] .
\end{equation*}%
Since Var$_{DGFF}^{D_{R},0}\left[ X_{\left( 1+\varepsilon _{1}\right) r}%
\right] \leq C\varepsilon _{1}^{4}$, the right side equals to%
\begin{eqnarray*}
&&\text{Var}_{DGFF}^{D_{R},0}\left[ X_{\left( 1-\varepsilon _{1}\right) r}%
\right] +\text{Cov}_{DGFF}^{D_{R},0}\left[ X_{\left( 1-\varepsilon
_{1}\right) r},X_{\left( 1+\varepsilon _{1}\right) r}\right] +O\left(
\varepsilon _{1}^{4}\right) \\
&=&\text{Var}_{DGFF}^{D_{R},0}\left[ X_{\left( 1-\varepsilon _{1}\right) r}%
\right] +O\left( \varepsilon _{1}^{2}\right) ,
\end{eqnarray*}%
where we apply Cauchy-Schwarz to bound the covariance. To compute Var$%
_{DGFF}^{D_{R},0}\left[ X_{\left( 1-\varepsilon _{1}\right) r}\right] $,
again using the Gibbs-Markov property, which implies for any $N/4<r<N$, 
\begin{eqnarray*}
\text{Var}_{DGFF}^{D_{R},0}\left[ C_{r}\left( 0,\phi \right) \right] &=&%
\text{Var}_{DGFF}^{D_{R},0}\left[ \phi \left( 0\right) \right] -\text{Var}%
_{DGFF}^{D_{r},0}\left[ \phi \left( 0\right) \right] \\
&=&\frac{2}{\pi }\log \frac{R}{r}+O\left( 1/N\right) .
\end{eqnarray*}%
Here we applied the standard Green's function asymptotics (see, e.g., \cite%
{LL}) to obtain the last line. Take a weighted sum over $f_{\varepsilon }$
(with $\varepsilon =\varepsilon _{1}^{4}$) we have 
\begin{multline}
\text{Var}_{DGFF}^{D_{R},0}\left[ X_{\left( 1-\varepsilon _{1}\right) r}%
\right] =\text{Var}_{DGFF}^{D_{R},0}\big[ C_{\left( 1-\varepsilon
_{1}\right) r}\left( 0,\phi \right) \\ -\sum_{r_{1}=-\varepsilon
r}^{\varepsilon r}f_{\varepsilon }\left( 1+\frac{r_{1}}{\left( 1-\varepsilon
_{1}\right) r}\right) \left( C_{\left( 1-\varepsilon _{1}\right) r}\left(
0,\phi \right) -C_{\left( 1-\varepsilon _{1}\right) r+r_{1}}\left( 0,\phi
\right) \right) \big] .  \label{localcomp}
\end{multline}%
Again, the Gibbs-Markov property implies for any $N/4<r_{1}<r_{2}<R,$%
\begin{equation*}
\text{Var}_{DGFF}^{D_{R},0}\left[ C_{r_{1}}\left( 0,\phi \right)
-C_{r_{2}}\left( 0,\phi \right) \right] =\text{Var}_{DGFF}^{D_{r_{2}},0}%
\left[ C_{r_{1}}\left( 0,\phi \right) \right] =\frac{2}{\pi }\log \frac{r_{2}%
}{r_{1}}+O\left( 1/N\right) .
\end{equation*}%
Substitute into the right side of (\ref{localcomp}), we conclude that 
\begin{equation*}
\text{Var}_{DGFF}^{D_{R},0}\left[ X_{\left( 1-\varepsilon _{1}\right) r}%
\right] =\text{Var}_{DGFF}^{D_{R},0}\left[ C_{\left( 1-\varepsilon
_{1}\right) r}\left( 0,\phi \right) \right] +O\left( \varepsilon
_{1}^{2}\right) =\frac{2}{\pi }\log \frac{1+\varepsilon _{1}}{1-\varepsilon
_{1}}+O\left( \varepsilon _{1}^{2}\right) .
\end{equation*}%
This yields (\ref{varatio}).
\end{proof}

\section{Pointwise distribution for Ginzburg-Landau field\label{ptwise}}

The main result of this section is the Gaussian tail for the Ginzburg-Landau
field at one site (Proposition \ref{thm: tail bound}). To prove this we will
employ a multiscale decomposition argument to obtain the approximate
Gaussian asymptotics of moment generating function of the harmonic average
process.

We first introduce the {proper} scales in order to carry out the inductive
argument. Given any $v\in D_{N}$, $\varepsilon >0$ and $c\in \left(
0,1\right) $, denote by $\Delta =$dist$\left( v,\partial D_{N}\right) $ and $%
M=M\left( c\right) =\left( 1-c\right) \log \Delta /\log \left( 1+\varepsilon
\right) $. Define the sequence of numbers $\left\{ r_{k}\right\}
_{k=1}^{\infty }$, $\left\{ r_{k,+}\right\} _{k=0}^{\infty }$ and $\left\{
r_{k,-}\right\} _{k=0}^{\infty }$ by%
\begin{eqnarray}
r_{k} &=&\left( 1+\varepsilon \right) ^{-k}\Delta ,  \label{scale} \\
r_{k,+} &=&\left( 1+\varepsilon ^{3}\right) r_{k},  \notag \\
r_{k,-} &=&\left( 1-\varepsilon ^{3}\right) r_{k}.  \notag
\end{eqnarray}%
We also define%
\begin{eqnarray*}
X_{r_{k,+}}\left( v\right) &=&\sum_{r=\left( 1-\varepsilon ^{4}\right)
r_{k,+}}^{\left( 1+\varepsilon ^{4}\right) r_{k,+}}f_{\varepsilon
^{4}}\left( \frac{r}{r_{k,+}}\right) C_{r}\left( v,\phi \right) , \\
X_{r_{k,-}}\left( v\right) &=&\sum_{r=\left( 1-\varepsilon ^{4}\right)
r_{k,-}}^{\left( 1+\varepsilon ^{4}\right) r_{k,-}}f_{\varepsilon
^{4}}\left( \frac{r}{r_{k,-}}\right) C_{r}\left( v,\phi \right) ,
\end{eqnarray*}%
where $C_{r}$ is defined in (\ref{CR}), and $f_{\varepsilon ^{4}}$ is the
smooth function defined just below (\ref{CR}).

For $r>0$ denote by $\mathbb{P}^{r,0}$ the law of the Ginzburg-Landau field
in $B_{r}\left( v\right) $ with zero boundary condition (and denote by $%
\mathbb{E}^{r,0}$ the corresponding expectation). The basic building block
of all our large deviation estimates is the following.

\begin{theorem}
\label{Gauss}There exists $g=g(V)$, such that given $C>0$, $c\in \left( 0,1\right) $ we have for all $v\in
D_{N}$ and $\left\vert t\right\vert \leq C$,%
\begin{equation*}
\log \mathbb{E}^{D_{N},0}\left[ \exp \left( tX_{r_{M,+}}\left( v\right)
\right) \right] =\frac{t^{2}}{2}\left( 1-c\right) g\log \Delta +o_{\Delta
}\left( \log \Delta \right)( t^{2}+t^4)+O\left( 1\right) ,
\end{equation*}%
where the $O\left( 1\right) $ term depends on $C$ and $c$.
\end{theorem}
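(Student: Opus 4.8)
\emph{Proof plan.} I will establish the estimate by an inductive decomposition of $X_{r_{M,+}}(v)$ into increments along the geometric sequence of scales $r_1>r_2>\dots>r_M=\Delta^{c}$, controlling the moment generating function (MGF) of each increment by conditioning on the field at coarser scales, invoking the domain Markov property together with the approximate harmonic coupling of Theorem~\ref{decouple}, and then applying the mesoscopic Gaussian estimate of Proposition~\ref{mgfCLT}. Write
\[
X_{r_{M,+}}(v)=X_{r_{1,+}}(v)+\sum_{k=1}^{M-1}Y_k,\qquad Y_k:=X_{r_{k+1,+}}(v)-X_{r_{k,+}}(v)=\sum_{y}\rho_k(y)\,\phi(y),
\]
where, as in Section~\ref{harmonic}, $\rho_k$ is supported at the two thin annular scales $\sim r_{k,+}$ and $\sim r_{k+1,+}$ and annihilates every discrete harmonic function. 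The point of the decomposition is that each $Y_k$ will be shown to carry a (conditional) log-MGF equal to $\tfrac{t^{2}}{2}g\log(1+\varepsilon)$ up to small errors; since $M=(1-c)\log\Delta/\log(1+\varepsilon)$, summing the $M-1$ increments produces exactly $\tfrac{t^{2}}{2}(1-c)g\log\Delta+O(1)$, while $X_{r_{1,+}}(v)$ -- a circle average at radius of order $\Delta$ -- will contribute only $O(1)$.

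\emph{Estimating one increment.} Fix $k$ and let $\mathcal F_k$ be the $\sigma$-algebra generated by $\phi$ outside a ball $B(v)$ of radius comparable to $r_k$, chosen (using the auxiliary radii $r_{j,\pm}$) so that the whole support of $\rho_k$ lies well inside $B(v)$. Conditionally on $\mathcal F_k$, the field inside $B(v)$ is a Ginzburg--Landau field with boundary data $f=\phi|_{\partial B(v)}$; on the good event $\mathcal G=\{\max|\phi|\le(\log r_k)^{2}\}$, which by Lemma~\ref{lem: BL tail bound} and a union bound has probability $1-O(r_k^{-\delta})$, this $f$ obeys the hypothesis of Theorem~\ref{decouple}. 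Since $\rho_k$ kills harmonic functions, the harmonic correction supplied by Theorem~\ref{decouple} drops out upon pairing with $\rho_k$, so -- arguing exactly as in Lemma~\ref{jasonerr}, and using that the contribution of $\mathcal G^{c}$ together with the coupling-failure event is at most $\exp(c\,\mathrm{Var}_{GFF}(Y_k))\,r_k^{-\delta}$, which is small because $\mathrm{Var}_{GFF}(Y_k)=O(\varepsilon)$ -- the conditional MGF $\mathbb E[e^{tY_k}\mid\mathcal F_k]$ equals, up to these errors, the MGF of the corresponding functional of a \emph{zero-boundary} Ginzburg--Landau field on $B(v)$, \emph{independently of the realization of $\mathcal F_k$}. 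Rescaling $B(v)$ to the unit disk turns this functional into a fixed ($\varepsilon$-dependent) observable that kills harmonics and is supported on an annulus of relative width $\sim\varepsilon$ lying at a definite distance from the boundary, so Proposition~\ref{mgfCLT} applies and gives $\tfrac{t^{2}}{2}\mathrm{Var}_{GFF}(Y_k)+o_{r_k}(\varepsilon)+O(\varepsilon^{2})$ for its log-MGF. A direct Green's-function computation identifies $\mathrm{Var}_{GFF}(Y_k)$ -- the variance, for the limiting continuum GFF, of the increment between circle averages at two radii in ratio $(1+\varepsilon)$ -- as $g\log(1+\varepsilon)$, in agreement with \eqref{var}.

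\emph{Assembling the telescope.} Because the single-increment estimate is deterministic, iterating the conditioning over the scales $k=M-1,M-2,\dots,1$ multiplies the successive conditional MGFs, and one obtains
\[
\log\mathbb E^{D_N,0}\!\left[e^{t X_{r_{M,+}}(v)}\right]=\log\mathbb E^{D_N,0}\!\left[e^{t X_{r_{1,+}}(v)}\right]+(M-1)\,\tfrac{t^{2}}{2}g\log(1+\varepsilon)+\sum_{k=1}^{M-1}\mathrm{err}_k .
\]
For the base term, the Brascamp--Lieb inequality together with $\mathrm{Var}_G\,C_R(v)=\sqrt{2/\pi}\,\log(\Delta/R)+O(1)$ shows that $X_{r_{1,+}}(v)$ has variance $O(1)$, while $\mathbb E^{D_N,0}X_{r_{1,+}}(v)=0$ by the $\phi\mapsto-\phi$ symmetry of \eqref{GL}; hence this term is $O(1)$ for $t\le C$, a matching lower bound being Jensen's inequality. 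By the choice of $M$, $(M-1)\tfrac{t^{2}}{2}g\log(1+\varepsilon)=\tfrac{t^{2}}{2}g\big((1-c)\log\Delta-\log(1+\varepsilon)\big)=\tfrac{t^{2}}{2}(1-c)g\log\Delta+O(1)$. Finally one bounds $\sum_k\mathrm{err}_k$: the coupling and good-event errors are $O(r_k^{-\delta})$ with $r_k\ge r_M=\Delta^{c}$, hence sum to $o_\Delta(1)$; the $o_{r_k}(\varepsilon)$ terms are $o_\Delta(\varepsilon)$ uniformly in $k$, hence sum to $M\cdot o_\Delta(\varepsilon)=o_\Delta(\log\Delta)$; and the remaining $O(\varepsilon^{2})$-per-scale contributions sum to $O(\varepsilon)\log\Delta$, which is made harmless by taking $\varepsilon$ small, noting that $X_{r_{M,+}}(v)$ depends on $\varepsilon$ only through a mollification localized at the fixed scale $r_M=\Delta^{c}$. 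This yields the asserted identity.

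\emph{Main obstacle.} The genuinely delicate point is the control of the \emph{accumulated} error: there are $M\asymp\varepsilon^{-1}\log\Delta$ scales, so each per-scale error must be of order $o(\varepsilon)$ for the total to remain $o_\Delta(\log\Delta)$. This forces one to use the quantitative decay $R^{-\delta}$ in Theorem~\ref{decouple} and the quantitative remainder in Proposition~\ref{mgfCLT}, not merely their qualitative content, and to track the $\varepsilon$-dependence of every estimate; it also requires care in choosing the auxiliary radii $r_k,r_{k,\pm}$ so that at every scale the support of $\rho_k$ sits inside the conditioned ball with room for the inner-radius condition $r>cR^{1-\gamma}$ of Theorem~\ref{decouple} and for the requirement in Proposition~\ref{mgfCLT} that the observable be bounded away from the boundary, while keeping the iterated conditioning consistent. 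A secondary, softer issue is the very first step, where $D_N$ must be compared with the ball $B_{r_1}(v)\subset D_N$; this too is handled by Theorem~\ref{decouple} since $r_1$ is comparable to $\mathrm{dist}(v,\partial D_N)$.
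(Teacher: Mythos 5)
Your overall strategy is the same as the paper's: telescope $X_{r_{M,+}}$ over the geometric scales, use the domain Markov property plus Theorem \ref{decouple} (on a good event controlled by Lemma \ref{lem: BL tail bound}) to replace each conditional increment MGF by a zero-boundary one, evaluate that by Proposition \ref{mgfCLT}, and sum, with the same error bookkeeping ($O(r_k^{-\delta})$ geometric, $O(\varepsilon^2)$ per scale absorbed by letting $\varepsilon\to 0$). However, there is a genuine gap in the decomposition itself. With $Y_k=X_{r_{k+1,+}}-X_{r_{k,+}}$, the consecutive increments $Y_{k-1}$ and $Y_k$ \emph{share} the circle-average cluster supported on radii $[(1-\varepsilon^4)r_{k,+},(1+\varepsilon^4)r_{k,+}]$. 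Hence no conditioning sphere $\partial B_{s}(v)$ can simultaneously (a) contain the support of $\rho_k$ strictly in its interior, as required to apply Theorem \ref{decouple} to the conditioned field, and (b) leave $Y_1,\dots,Y_{k-1}$ measurable with respect to the exterior field, as required for the claim that "iterating the conditioning multiplies the successive conditional MGFs." You would need $(1+\varepsilon^4)r_{k,+}\le s\le(1-\varepsilon^4)r_{k,+}$, which is impossible, so the iteration cannot be run with this decomposition.

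This is precisely why the paper introduces both radii $r_{k,\pm}=(1\pm\varepsilon^3)r_k$ and splits the telescope into the main factors $W_j=\exp\bigl(t(X_{r_{j,+}}-X_{r_{j-1,-}})\bigr)$, whose observables sit strictly inside $B_{r_{j-1}}(v)$, and the gap factors $Y_j=\exp\bigl(t(X_{r_{j,-}}-X_{r_{j,+}})\bigr)$, which straddle the conditioning spheres and therefore do \emph{not} factor. Controlling the latter is not automatic: it requires the expansion \eqref{sum}, an induction, and Cauchy--Schwarz with the Brascamp--Lieb bounds $\mathbb{E}^{r_{j-1},0}[(Y_j-1)^2]=O(\varepsilon^4)$ and $\mathbb{E}^{r_{j-1},0}[W_j^2]=O(1)$ to show the total cross-term contribution is a multiplicative $1+O(\varepsilon^2)$; this is the content of Theorem \ref{ind} and constitutes a substantial part of the proof that your proposal omits. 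The remaining ingredients of your write-up (the $O(1)$ base term at scale $r_0\asymp\Delta$, the identification of the per-scale variance $g\log(1+\varepsilon)$, and the summation of errors) agree with the paper.
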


\begin{remark}
\label{Gauss2}The proof of Theorem \ref{Gauss} also yields%
\begin{equation*}
\log \mathbb{E}^{D_{N},0}\left[ \exp \left( tX_{r_{M,+}}\left( v\right)
-tX_{r_{0,-}}\left( v\right) \right) \right] =\frac{t^{2}}{2}\left(
1-c\right) g\log \Delta +o_{\Delta }\left( \log \Delta \right) (t^{2}+t^4)+O\left(
1\right) .
\end{equation*}%
This will be used to prove Theorem \ref{expcircle} below.
\end{remark}

Roughly speaking, this theorem indicates that as long as the last scale $%
r_{M}$ satisfies $r_{M}>\Delta ^{c}$, for some $c>0$, the harmonic average $%
X_{r_{M,+}}$ is nearly Gaussian with mean zero and variance $g\log \frac{N}{%
r_{M}}$. To prove this theorem we will first prove the following decoupling
result. We denote 
\begin{eqnarray*}
W_{j} &=&\exp \left( t\left( X_{r_{j,+}}-X_{r_{j-1,-}}\right) \right) , \\
Y_{j} &=&\exp \left( t\left( X_{r_{j,-}}-X_{r_{j,+}}\right) \right) , \\
Z_{j} &=&\exp \left( t\left( X_{r_{j,+}}\right) \right) .
\end{eqnarray*}%
Here $W_{j}$ encodes the distribution of the increment of the harmonic
average process $X_{\cdot }$, $Y_{j}$ are introduced to make the $W_{j}$'s
decouple, and we will show they have little influence on the large deviation
estimates.

\begin{theorem}
\label{ind}Given $C>0$, $c\in \left( 0,1\right) $ and $C_{1}<\infty $ we
have for all $v\in D_{N}$ and $\left\vert t\right\vert \leq C$,%
\begin{equation*}
\log \mathbb{E}^{D_{N},0}\left[ \exp \left( tX_{r_{M,+}}\right) \right]
=\sum_{j=1}^{M}\log \mathbb{E}^{r_{j-1},0}\left[ W_{j}\right] +\log \mathbb{E%
}^{D_{N},0}\left[ \exp \left( tX_{r_{0,-}}\right) \right] +t^{2}f\left(
\varepsilon \right) \log \Delta +O\left( 1\right) ,
\end{equation*}%
where $\left\vert f\left( \varepsilon \right) \right\vert \leq
C_{1}\varepsilon ^{2}/\log \left( 1+\varepsilon \right) $, and the $O\left(
1\right) $ term depends on $C$ and constants from Lemma \ref{decouple}. More
precisely, we have for and $k=1,...,M$,%
\begin{eqnarray}
&&\log \mathbb{E}^{D_{N},0}\left[ \exp \left( tX_{r_{k,+}}\right) \right]
\label{ind2} \\
&=&\sum_{j=1}^{k}\log \mathbb{E}^{r_{j-1},0}\left[ W_{j}\right] +\log 
\mathbb{E}^{D_{N},0}\left[ \exp \left( tX_{r_{0,-}}\right) \right]
+t^{2}O\left( \frac{\varepsilon ^{2}}{\log \left( 1+\varepsilon \right) }%
\right) \log \frac{\Delta }{r_{k}}+O\left( \sum_{j=1}^{k-1}r_{j}^{-\delta
}\right) .  \notag
\end{eqnarray}
\end{theorem}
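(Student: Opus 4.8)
The plan is to prove the more precise statement \eqref{ind2} by induction on $k$, and then obtain the first displayed identity by taking $k=M$ (noting $r_M>\Delta^c$ so $\log(\Delta/r_M)=c'\log\Delta$ with $c'<1$, and absorbing the prefactor into $f(\varepsilon)\log\Delta$). The base case $k=0$ is trivial since the sums are empty and $r_0=\Delta$. For the inductive step, I would pass from scale $r_{k-1}$ to scale $r_k$ by inserting the intermediate scale $r_{k-1,-}$ and then $r_{k,+}$, writing
\begin{equation*}
\exp\!\left(tX_{r_{k,+}}\right)=\exp\!\left(tX_{r_{k-1,-}}\right)\cdot W_k\cdot\exp\!\left(t(X_{r_{k,+}}-X_{r_{k-1,-}})-tX_{r_{k,+}}+tX_{r_{k-1,-}}\right),
\end{equation*}
so that really $\exp(tX_{r_{k,+}})=\exp(tX_{r_{k-1,-}})\,W_k$ exactly, the point being the telescoping $X_{r_{k,+}}-X_{r_{k-1,-}}=\log W_k /t$. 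The substance is to show that conditionally on the field outside $B_{r_{k-1}}(v)$, the factor $W_k$ decouples from $\exp(tX_{r_{k-1,-}})$ up to a small multiplicative error, and that the conditional expectation of $W_k$ can be replaced by $\mathbb{E}^{r_{k-1},0}[W_k]$, i.e.\ by the same expectation computed in the ball $B_{r_{k-1}}(v)$ with zero boundary condition.

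The main technical tool for this is Lemma \ref{jasonerr} (equivalently, Lemma \ref{average}) applied with $D=B_{r_{k-1}}(v)$, $R\asymp r_{k-1}$, and the test function $\rho$ being the signed measure representing the increment $X_{r_{k,+}}-X_{r_{k-1,-}}$: by the lemma in Section \ref{harmonic}, this $\rho$ annihilates discrete harmonic functions on $B_{r_{k-1}}(v)$, which is exactly the hypothesis of Lemma \ref{jasonerr}. Thus, on an event of probability $1-O(r_{k-1}^{-\delta})$, the increment $X_{r_{k,+}}-X_{r_{k-1,-}}$ computed under the conditioned (non-zero boundary) field agrees with the one computed under an independent zero-boundary field on $B_{r_{k-1}}(v)$; off that event one controls the contribution by Hölder plus Brascamp--Lieb exactly as in the proof of Lemma \ref{jasonerr}. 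Taking logarithms turns the product structure into the sum $\sum_{j=1}^k\log\mathbb{E}^{r_{j-1},0}[W_j]+\log\mathbb{E}^{D_N,0}[\exp(tX_{r_{0,-}})]$, with each step contributing an additive error $O(r_{j-1}^{-\delta})$ (from the coupling failure) plus an additive error of size $f(\varepsilon)\cdot\log(r_{k-1}/r_k)=f(\varepsilon)\log(1+\varepsilon)$ per step (from replacing $X_{r_{k-1,-}}$ by $X_{r_{k-1,+}}$, i.e.\ the within-annulus wiggle $Y$, whose moment generating function is $O(\varepsilon^2)$ by Proposition \ref{mgfCLT} or directly by Brascamp--Lieb, and whose cumulative effect over the $\approx M$ steps down to scale $r_k$ is $f(\varepsilon)\log(\Delta/r_k)$ with $f(\varepsilon)\le\varepsilon^2/\log(1+\varepsilon)$). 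Summing the errors gives the $O(\sum_{j=1}^k r_j^{-\delta})$ term in \eqref{ind2}; since $r_j=(1+\varepsilon)^{-j}\Delta$ grows geometrically as $j$ decreases, this sum is $O(\Delta^{-\delta(1-c)})=O(1)$, and in fact $o(1)$, which after collecting all scales down to $r_M\ge\Delta^c$ yields the first displayed formula with an $O(1)$ remainder depending only on $C$, $c$ and the constants from Lemma \ref{jasonerr}.

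The step I expect to be the main obstacle is controlling the indicator $1_{\mathcal G}=1_{\{\max_{B_{r_{k-1}}}|\phi|<(\log r_{k-1})^2\}}$ appearing in Lemma \ref{jasonerr}: the clean decoupling identity only holds with this truncation inserted, so at each scale one must argue that removing the indicator changes the moment generating function by a negligible amount. This is handled by the a priori Brascamp--Lieb tail bound (Lemma \ref{lem: BL tail bound}) together with a union bound over the $O(r_{k-1}^2)$ sites, which makes $\mathbb{P}(\mathcal G^c)$ superpolynomially small in $r_{k-1}$, so that — after a Hölder step paired with the exponential-moment bound from Brascamp--Lieb for $tX_{r_{k,+}}$ with $t\le C$ — the contribution of $\mathcal G^c$ is $o(r_{k-1}^{-\delta})$ and gets absorbed into the existing error term. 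The bookkeeping that the various constants ($\delta$, $\gamma$, the implied constants) are uniform in $k$ and depend only on $c_-,c_+,C,c$, and that the conditions of Theorem \ref{decouple} (namely $r>cR^{1-\gamma}$) are met at every scale — which holds because consecutive scales differ by the fixed factor $1+\varepsilon$ and all scales exceed $\Delta^c$ — is routine but needs to be stated carefully. $\rule{0.5em}{0.5em}$
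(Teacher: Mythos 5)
Your proposal is correct and follows essentially the same route as the paper: induction over the geometric scales $r_k$, the exact factorization $\exp(tX_{r_{k,+}})=\exp(tX_{r_{k-1,-}})W_k$, decoupling of $W_k$ via the Markov property together with Lemma \ref{jasonerr}, Brascamp--Lieb plus H\"older to remove the indicator $1_{\mathcal G}$, and a per-step $O(\varepsilon^2)$ cost for trading $X_{r_{k-1,-}}$ for the outside-measurable $X_{r_{k-1,+}}$, which is exactly the role of the paper's $Y_j$ factors and of the term $f(\varepsilon)\log(\Delta/r_k)$. The only point needing care in the write-up is that $X_{r_{k-1,-}}$ itself is \emph{not} measurable with respect to the field outside $B_{r_{k-1}}(v)$, so the $-\,\to\,+$ replacement must precede the conditioning (and, as in the paper's bound on $E_Y$, the resulting $(Y_j-1)$ factors are controlled by Cauchy--Schwarz inside the expectation rather than by multiplying moment generating functions); also the tail of the coupling errors is $O(\Delta^{-c\delta})$, not $O(\Delta^{-\delta(1-c)})$, though both are $o(1)$.
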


Notice that $\sum_{j=1}^{k}r_{j}^{-\delta }$ is a geometric sum, and is thus 
$O\left( r_{k}^{-\delta }\right) $.

\begin{proof}[Proof of Theorem \protect\ref{Gauss}]
Applying Proposition \ref{mgfCLT} (in particular, (\ref{logmomratio})), we
see that there exists $g=g\left( V\right) $, such that as $\Delta
\rightarrow \infty $, 
\begin{equation*}
\log \mathbb{E}^{r_{j-1},0}\left[ W_{j}\right] =\frac{t^{2}}{2}g\log \frac{%
r_{j-1}}{r_{j}}+o_{\Delta }\left( 1\right) \log \frac{r_{j-1}}{r_{j}}%
t^{2}+O\left( \varepsilon ^{2}\right) \left( t^{2}+t^{4}\right) .
\end{equation*}%
Summing over $j$ and applying Theorem \ref{ind}, we have 
\begin{eqnarray*}
\log \mathbb{E}^{D_{N},0}\left[ \exp \left( tX_{r_{M,+}}\right) \right] &=&%
\frac{t^{2}}{2}\left( 1-c\right) g\log \Delta +o_{\Delta }\left( \log \Delta
\right) t^{2}+\left( t^{2}+t^{4}\right) O\left( \frac{\varepsilon ^{2}}{\log
\left( 1+\varepsilon \right) }\right) \log \Delta \\
&&+t^{2}f\left( \varepsilon \right) \log \Delta +O\left( 1\right) .
\end{eqnarray*}%
Since $|t|\leq C$, sending $\varepsilon \rightarrow 0$ we conclude Theorem %
\ref{Gauss}.
\end{proof}

\subsection{Proof of Theorem \protect\ref{ind}\label{indpf}}

We write $X_{r_{M,+}}$ as a telescoping sum%
\begin{equation*}
X_{r_{M,+}}=\left( X_{r_{M,+}}-X_{r_{M-1,-}}\right) +\left(
X_{r_{M-1,-}}-X_{r_{M-2},+}\right) +...\left( X_{r_{1,+}}-X_{r_{0,-}}\right)
+X_{r_{0,-}},
\end{equation*}%
and therefore%
\begin{eqnarray*}
Z_{M} &=&e^{tX_{r_{M,+}}}=e^{tX_{r_{0,-}}}\prod_{j=1}^{M}\exp \left( t\left(
X_{r_{j,+}}-X_{r_{j-1,-}}\right) \right) \prod_{j=1}^{M-1}\exp \left(
t\left( X_{r_{j,-}}-X_{r_{j,+}}\right) \right) \\
&=&W_{M}Y_{M-1}W_{M-1}...Y_{1}W_{1}\exp \left( tX_{r_{0,-}}\right) .
\end{eqnarray*}%
Notice that 
\begin{equation*}
Z_{k}=W_{k}Y_{k-1}Z_{k-1}=W_{k}Z_{k-1}+W_{k}\left( Y_{k-1}-1\right) Z_{k-1}.
\end{equation*}%
Since $Z_{k-1}=W_{k-1}Y_{k-2}Z_{k-2}$, by iterating we obtain 
\begin{eqnarray}
Z_{k} &=&\sum_{m=1}^{k-1}W_{m+1}Z_{m}\prod_{j=m+2}^{k}\left( W_{j}\left(
Y_{j-1}-1\right) \right) +Z_{1}\prod_{j=2}^{k}W_{j}\left( Y_{j-1}-1\right)
\label{sum} \\
&=&W_{k}Z_{k-1}+E_{Y}^{\left( k\right) },  \notag
\end{eqnarray}%
where 
\begin{equation}
E_{Y}^{\left( k\right) }=\sum_{m=1}^{k-2}W_{m+1}Z_{m}\prod_{j=m+2}^{k}\left(
W_{j}\left( Y_{j-1}-1\right) \right) +Z_{1}\prod_{j=2}^{k}W_{j}\left(
Y_{j-1}-1\right) .  \label{eyk}
\end{equation}

We will show that the main contribution to $\log \mathbb{E}^{D_{N},0}\left[
Z_{k}\right] $ is the first term in the summation (\ref{sum}), i.e., $\log 
\mathbb{E}^{D_{N},0}\left[ W_{k}Z_{k-1}\right] $, and that the other terms
are negligible. We denote by $\mathcal{F}_{k}=\sigma \left( \phi \left(
x\right) :x\in D_{N}\setminus B_{r_{k}}\left( v\right) \right) $, and take $%
\mathcal{G}=\left\{ \max_{x\in B_{\Delta }\left( v\right) }\left\vert \phi
\left( x\right) \right\vert \leq \left( \log \Delta \right) ^{2}\right\} $.
Recall that by Lemma \ref{bad}, $\mathbb{P}\left( \mathcal{G}^{c}\right)
\leq \exp \left( -c_{1}\left( \log \Delta \right) ^{3}\right) $ for some $%
c_{1}>0$.

We can write 
\begin{equation*}
\mathbb{E}^{D_{N},0}\left[ Z_{k}\right] =\mathbb{E}^{D_{N},0}\left[ Z_{k}1_{%
\mathcal{G}}\right] +\mathbb{E}^{D_{N},0}\left[ Z_{k}1_{\mathcal{G}^{c}}%
\right] .
\end{equation*}%
Since $|t|\leq C$, apply H\"{o}lder and the exponential Brascamp-Lieb
inequality,%
\begin{eqnarray}
\mathbb{E}^{D_{N},0}\left[ Z_{k}1_{\mathcal{G}^{c}}\right] &\leq &\left( 
\mathbb{E}^{D_{N},0}\left[ Z_{k}^{2}\right] \right) ^{1/2}\mathbb{P}%
^{D_{N},0}\left( \mathcal{G}^{c}\right) ^{1/2}  \notag \\
&\leq &\exp \left( 2c_{-}^{-1}t^{2}\text{Var}_{DGFF}^{D_{N},0}\left[
X_{r_{M,+}}\right] \right) \mathbb{P}^{D_{N},0}\left( \mathcal{G}^{c}\right)
^{1/2}  \notag \\
&\leq &\exp \left( Ct^{2}\log \Delta -\frac{c_{1}}{2}\left( \log \Delta
\right) ^{3}\right) \leq \exp \left( -\frac{c_{1}}{4}\left( \log \Delta
\right) ^{3}\right) ,  \label{Zbad}
\end{eqnarray}%
which is negligible.

In order to prove (\ref{ind2}), we set up a joint induction for

\begin{itemize}
\item There exists an absolute constant $C_{1}<\infty $, such that for all $%
k\geq 0$, 
\begin{eqnarray}
&&\log \mathbb{E}^{D_{N},0}\left[ Z_{k}1_{\mathcal{G}}\right]  \label{Zgood}
\\
&=&\sum_{j=1}^{k}\log \mathbb{E}^{r_{j-1},0}\left[ W_{j}1_{\mathcal{G}}%
\right] +\log \mathbb{E}^{D_{N},0}\left[ \exp \left( tX_{r_{0,-}}\right) 1_{%
\mathcal{G}}\right] +t^{2}F_{k}+R_{k-1},  \notag
\end{eqnarray}%
where $\left\vert F_{k}\right\vert \leq C_{1}k\varepsilon ^{2}=C_{1}\frac{%
\varepsilon ^{2}}{\log \left( 1+\varepsilon \right) }\log \frac{\Delta }{%
r_{k}}$, and $\left\vert R_{k-1}\right\vert \leq
C_{1}\sum_{j=1}^{k-1}r_{j}^{-\delta }$.

\item There exists an absolute constant $C_{2}<\infty $, such that for all $%
k\geq 2$, 
\begin{equation}
\mathbb{E}^{D_{N},0}\left[ E_{Y}^{\left( k\right) }1_{\mathcal{G}}\right]
\leq C_{2}\varepsilon ^{2}\mathbb{E}^{D_{N},0}\left[ Z_{k-2}1_{\mathcal{G}}%
\right] .  \label{ey}
\end{equation}
\end{itemize}

Notice that (\ref{Zgood}) implies (\ref{ind2}), since for all $k\geq 1$, 
\begin{equation}
\mathbb{E}^{r_{k-1},0}\left[ W_{k}1_{\mathcal{G}^{c}}\right] \leq \exp
\left( -\frac{c_{1}}{4}\left( \log r_{k-1}\right) ^{3}\right) ,
\label{Wkbad}
\end{equation}%
and similar bound hold for $\mathbb{E}^{D_{N},0}\left[ \exp \left(
tX_{r_{0,-}}\right) 1_{\mathcal{G}^{c}}\right] $. Clearly the base case $k=0$
for (\ref{Zgood}) is trivial.

Now assume both (\ref{Zgood}) and (\ref{ey}) hold up to $k-1$. Let us first
show the desired bound for $\mathbb{E}^{D_{N},0}\left[ E_{Y}^{\left(
k\right) }1_{\mathcal{G}}\right] $. For $m=1,...,k-2$, using the Markov
property and Cauchy-Schwarz, we conclude that each term in the first summand
of (\ref{eyk}) (multiplied by $1_{\mathcal{G}}$) can be bounded by 
\begin{eqnarray}
&&\mathbb{E}^{D_{N},0}\left[ W_{m+1}Z_{m}\prod_{j=m+2}^{k}\left( W_{j}\left(
Y_{j-1}-1\right) \right) 1_{\mathcal{G}}\right]  \notag \\
&=&\mathbb{E}^{D_{N},0}\left[ \mathbb{E}\left[ W_{m+1}\prod_{j=m+2}^{k}%
\left( W_{j}\left( Y_{j-1}-1\right) \right) 1_{\mathcal{G}}|\mathcal{F}_{m}%
\right] Z_{m}1_{\mathcal{G}}\right]  \notag \\
&\leq &\mathbb{E}^{D_{N},0}\left[ \mathbb{E}\left[
\prod_{j=m+1}^{k}W_{j}^{2}1_{\mathcal{G}}|\mathcal{F}_{m}\right] ^{1/2}%
\mathbb{E}\left[ \prod_{j=m+1}^{k-1}\left( Y_{j}-1\right) ^{2}1_{\mathcal{G}%
}|\mathcal{F}_{m}\right] ^{1/2}Z_{m}1_{\mathcal{G}}\right] .  \label{rest}
\end{eqnarray}%
We now claim that there exist constants $C_{3},C_{4}<\infty $, such that for 
$\left\vert t\right\vert \leq C$, 
\begin{equation}
\mathbb{E}^{r_{j-1,-},0}\left[ \left( Y_{j}-1\right) ^{2}\right] \leq
C_{3}\varepsilon ^{4},  \label{yj}
\end{equation}%
and 
\begin{equation}
\mathbb{E}^{r_{j-1},0}\left[ W_{j}^{2}\right] \leq \exp \left(
4c_{-}^{-1}t^{2}\text{Var}_{DGFF}^{r_{j-1},0}\left[ X_{r_{j,+}}-X_{r_{j-1,-}}%
\right] \right) \leq C_{4}.  \label{wj}
\end{equation}%
Indeed, using Taylor expansion\ we can write 
\begin{eqnarray*}
\mathbb{E}^{r_{j-1,-},0}\left[ \left( Y_{j}-1\right) ^{2}\right] &=&\mathbb{E}%
^{r_{j-1,-},0}\left[ \left( \sum_{k\geq 1}\frac{t^{k}}{k!}\left(
X_{r_{j,-}}-X_{r_{j,+}}\right) ^{k}\right) ^{2}\right] \\
&\leq &\sum_{k\geq 1}\mathbb{E}^{r_{j-1,-},0}\left[ \sum_{j=1}^{2k}\frac{2}{%
j!\left( 2k-j\right) !}t^{2k}\left( X_{r_{j,-}}-X_{r_{j,+}}\right) ^{2k}%
\right] .
\end{eqnarray*}%
Using the identity%
\begin{equation*}
\sum_{j=1}^{2k}\frac{1}{j!\left( 2k-j\right) !}=\frac{1}{\left( 2k\right) !}%
2^{2k},
\end{equation*}%
and the Brascamp-Lieb inequality (\ref{eq: BL even moment bound}) combined
with Wick's theorem, 
\begin{eqnarray*}
\mathbb{E}^{r_{j-1,-},0}\left[ \left( X_{r_{j,-}}-X_{r_{j,+}}\right) ^{2k}%
\right] &\leq &c_{-}^{-k}\mathbb{E}_{DGFF}^{r_{j-1,-},0}\left[ \left(
X_{r_{j,-}}-X_{r_{j,+}}\right) ^{2k}\right] \\
&\leq &c_{-}^{-k}\frac{\left( 2k\right) !}{k!2^{k}}\left( \mathbb{E}%
_{DGFF}^{r_{j-1,-},0}\left[ \left( X_{r_{j,-}}-X_{r_{j,+}}\right) ^{2}\right]
\right) ^{k},
\end{eqnarray*}%
we obtain 
\begin{eqnarray*}
\mathbb{E}^{r_{j-1,-},0}\left[ \left( Y_{j}-1\right) ^{2}\right] &\leq
&\sum_{k\geq 1}\frac{t^{2k}2^{k+1}}{k!}c_{-}^{-k}\left( \mathbb{E}%
_{DGFF}^{r_{j-1,-},0}\left[ \left( X_{r_{j,-}}-X_{r_{j,+}}\right) ^{2}\right]
\right) ^{k} \\
&\leq &C^{\prime }t^{2}\mathbb{E}_{DGFF}^{r_{j-1,-},0}\left[ \left(
X_{r_{j,-}}-X_{r_{j,+}}\right) ^{2}\right]
\end{eqnarray*}%
for some $C^{\prime }<\infty $. A similar computation as (\ref{gffdiff})
using the Gibbs-Markov property then yields 
\begin{equation}
\text{Var}_{DGFF}^{r_{j-1,-},0}\left[ X_{r_{j,-}}-X_{r_{j,+}}\right] \leq
C^{\prime \prime }\log \frac{r_{j,+}}{r_{j,-}}\leq C^{\prime \prime
}\varepsilon ^{4}.  \label{varyj}
\end{equation}%
This verifies (\ref{yj}). (\ref{wj}) follows directly from the exponential
Brascamp-Lieb inequality (\ref{eq: BL exp mom bound}).

We then use (\ref{yj}) and (\ref{wj}) to obtain an upper bound of (\ref{rest}%
). Let $\mathcal{F}_{k}^-=\sigma \left( \phi \left(
x\right) :x\in D_{N}\setminus B_{r_{k},-}\left( v\right) \right) $. Again use the Markov property%
\begin{eqnarray*}
&&\mathbb{E}\left[ \prod_{j=m+1}^{k-1}\left( Y_{j}-1\right) ^{2}1_{\mathcal{G%
}}|\mathcal{F}_{m}\right] \\
&=&\mathbb{E}\left[ \mathbb{E}\left[ \left( Y_{k-1}-1\right) ^{2}1_{\mathcal{%
G}}|\mathcal{F}_{k-2}^-\right] \prod_{j=m+1}^{k-2}\left( Y_{j}-1\right) ^{2}1_{%
\mathcal{G}}|\mathcal{F}_{m}\right] .
\end{eqnarray*}%
We now use the fact that $r_{k-2,-}\geq r_{M}\geq \Delta ^{c}$, and therefore
on the event $\mathcal{G}$, 
\begin{equation}
\max_{x\in \partial B_{r_{k-2},-}\left( v\right) }|\phi \left( x\right) |\leq
\left( \log \Delta \right) ^{2}\leq \left( \frac{1}{c}\log r_{k-2,-}\right)
^{2}.  \label{condk-2}
\end{equation}%
Apply Theorem \ref{decouple} (to any realization of $\phi |_{\partial
B_{r_{k-2,-}}}$ that satisfy (\ref{condk-2})), Cauchy-Schwarz and the
Brascamp-Lieb inequality, we conclude there is some $C_{3}^{\prime }<\infty $
and $\delta >0$, such that with probability one, 
\begin{equation*}
\left\vert \mathbb{E}\left[ \left( Y_{k-1}-1\right) ^{2}1_{\mathcal{G}}|%
\mathcal{F}_{k-2}^-\right] -\mathbb{E}^{r_{k-2,-},0}\left[ \left(
Y_{k-1}-1\right) ^{2}1_{\mathcal{G}}\right] \right\vert \leq C_{3}^{\prime
}\varepsilon ^{4}r_{k-2}^{-\delta },
\end{equation*}%
for some $\delta >0$. Thus%
\begin{eqnarray*}
&&\mathbb{E}\left[ \prod_{j=m+1}^{k-1}\left( Y_{j}-1\right) ^{2}1_{\mathcal{G%
}}|\mathcal{F}_{m}\right] \\
&=&\left( \mathbb{E}^{r_{k-2,-},0}\left[ \left( Y_{k-1}-1\right) ^{2}1_{%
\mathcal{G}}\right] +O\left( \varepsilon ^{4}r_{k-2}^{-\delta }\right)
\right) \mathbb{E}\left[ \prod_{j=m+1}^{k-2}\left( Y_{j}-1\right) ^{2}1_{%
\mathcal{G}}|\mathcal{F}_{m}\right] ,
\end{eqnarray*}%
here we also use the fact that $\prod_{j=m+1}^{k-2}\left( Y_{j}-1\right)
^{2}\geq 0$. By iterating this for $j\geq m+1$, applying the bound (\ref{yj}%
) and notice $\sum_j r_{j-2}^{-\delta }<\infty $, we conclude there exist
absolute constants $C_{2}^{\prime },C_{3}^{\prime \prime }<\infty $, such
that with probability one, 
\begin{equation*}
\mathbb{E}\left[ \prod_{j=m+1}^{k-1}\left( Y_{j}-1\right) ^{2}1_{\mathcal{G}%
}|\mathcal{F}_{m}\right] \leq C_{2}^{\prime }\left( C_{3}^{\prime \prime
}\varepsilon ^{4}\right) ^{k-m-1}.
\end{equation*}%
Similarly, there exists $C_{4}^{\prime }<\infty $, such that with probability one,
\begin{equation*}
\mathbb{E}\left[ \prod_{j=m+1}^{k}W_{j}^{2}1_{\mathcal{G}}|\mathcal{F}_{m}%
\right] \leq C_{2}^{\prime }\left( C_{4}^{\prime }\right) ^{k-m-1}.
\end{equation*}

Substitute these bounds into (\ref{rest}), we have for some $C_{5}<\infty $, 
\begin{equation}
\mathbb{E}^{D_{N},0}\left[ W_{m+1}Z_{m}\prod_{j=m+2}^{k}\left( W_{j}\left(
Y_{j-1}-1\right) \right) 1_{\mathcal{G}}\right] \leq C_{2}^{\prime }\left(
C_{5}\varepsilon ^{2}\right) ^{k-m-1}\mathbb{E}^{D_{N},0}\left[ Z_{m}1_{%
\mathcal{G}}\right] .  \label{rest1}
\end{equation}%
By the induction hypothesis (\ref{Zgood}) for $m\leq k-2$, 
\begin{equation*}
\log \mathbb{E}^{D_{N},0}\left[ Z_{m}1_{\mathcal{G}}\right] -\log \mathbb{E}%
^{D_{N},0}\left[ Z_{k-2}1_{\mathcal{G}}\right] \leq -\sum_{j=m+1}^{k-2}\log 
\mathbb{E}^{r_{j-1},0}\left[ W_{j}1_{\mathcal{G}}\right] +t^{2}\left\vert
F_{k-2}-F_{m}\right\vert +\left\vert R_{m-1}\right\vert ,
\end{equation*}%
where $\left\vert F_{k-2}-F_{m}\right\vert \leq C_{1}\left( k-2-m\right)
\varepsilon ^{2}=C_{1}\frac{\varepsilon ^{2}}{\log \left( 1+\varepsilon
\right) }\log \frac{r_{m}}{r_{k-2}}$, $\left\vert R_{m-1}\right\vert \leq
C_{1}r_{m-1}^{-\delta }$. Applying Proposition \ref{mgfCLT} (and use the
smallness of $\mathbb{E}^{r_{j-1},0}\left[ W_{j}1_{\mathcal{G}^{c}}\right] $%
) to evaluate $\log \mathbb{E}^{r_{j-1},0}\left[ W_{j}1_{\mathcal{G}}\right] 
$ as $\Delta \rightarrow \infty $, the right side is bounded above by%
\begin{equation*}
-\frac{t^{2}}{2}g\log \frac{r_{m}}{r_{k-2}}+t^{2}o_{\Delta }\left( 1\right)
\log \frac{r_{m}}{r_{k-2}}+O\left( \frac{\varepsilon ^{2}}{\log \left(
1+\varepsilon \right) }\right) \log \frac{r_{m}}{r_{k-2}}+O\left(
r_{m-1}^{-\delta }\right) .
\end{equation*}%
For $\varepsilon $ sufficiently small, this is bounded by $O\left(
r_{m-1}^{-\delta }\right) $, and we have $\mathbb{E}^{D_{N},0}\left[ Z_{m}1_{%
\mathcal{G}}\right] \leq 2\mathbb{E}^{D_{N},0}\left[ Z_{k-2}1_{\mathcal{G}}%
\right] $ for all $m\leq k-2$. This concludes that for some absolute
constant $C_{6}<\infty $, (\ref{rest1}) is bounded by 
\begin{equation*}
C_{6}\varepsilon ^{2(k-m-1)}\mathbb{E}^{D_{N},0}\left[ Z_{k-2}1_{\mathcal{G}}%
\right] .
\end{equation*}%
Summing over $m$, we then have%
\begin{equation*}
\mathbb{E}^{D_{N},0}\left[ \sum_{m=1}^{k-2}W_{m+1}Z_{m}\prod_{j=m+2}^{k}%
\left( W_{j}\left( Y_{j-1}-1\right) \right) 1_{\mathcal{G}}\right] \leq
C_{7}\varepsilon ^{2}\mathbb{E}^{D_{N},0}\left[ Z_{k-2}1_{\mathcal{G}}\right]
,
\end{equation*}%
for some $C_{7}<\infty $. A similar argument yields%
\begin{equation*}
\mathbb{E}^{D_{N},0}\left[ Z_{1}\prod_{j=2}^{k}W_{j}\left( Y_{j-1}-1\right)
1_{\mathcal{G}}\right] \leq C_{7}^{\prime }\varepsilon ^{2}\mathbb{E}%
^{D_{N},0}\left[ Z_{k-2}1_{\mathcal{G}}\right] .
\end{equation*}%
This finishes the proof of (\ref{ey}) for $k$ (with $C_{2}=C_{7}+C_{7}^{%
\prime }$).

We now move to the proof of (\ref{Zgood}). We first show that there exists $%
C_{0}<\infty $, such that%
\begin{equation}
\log \mathbb{E}^{D_{N},0}\left[ W_{k}Z_{k-1}1_{\mathcal{G}}\right] =\log 
\mathbb{E}^{D_{N},0}\left[ Z_{k-1}1_{\mathcal{G}}\right] +\log \mathbb{E}%
^{r_{k-1},0}\left[ W_{k}1_{\mathcal{G}}\right] +R_{k-1}^{\prime },
\label{wz}
\end{equation}%
where $\left\vert R_{k-1}^{\prime }\right\vert \leq C_{0}r_{k-1}^{-\delta }$%
. Using Markov property, 
\begin{equation*}
\mathbb{E}^{D_{N},0}\left[ W_{k}Z_{k-1}1_{\mathcal{G}}\right] =\mathbb{E}%
^{D_{N},0}\left[ Z_{k-1}1_{\mathcal{G}}\mathbb{E}\left[ W_{k}1_{\mathcal{G}}|%
\mathcal{F}_{k-1}\right] \right]
\end{equation*}

Apply Lemma \ref{jasonerr} (to any realization of $\phi |_{\partial
B_{r_{k-1}}}$ that is absoluted bounded by $\left( \frac{1}{c}\log
r_{k-1}\right) ^{2}$) to obtain with probability one, there is some $%
C_{0}<\infty $ and $\delta >0$, such that 
\begin{eqnarray*}
|\mathbb{E}\left[ W_{k}1_{\mathcal{G}}|\mathcal{F}_{k-1}\right] -\mathbb{E}%
^{r_{k-1},0}\left[ W_{k}1_{\mathcal{G}}\right] | &\leq &2\exp \left( c\text{%
Var}_{DGFF}^{r_{k-1},0}W_{k}\right) r_{k-1}^{-\delta } \\
&\leq &C_{0}r_{k-1}^{-\delta },
\end{eqnarray*}
Therefore, 
\begin{equation*}
\left\vert \mathbb{E}^{D_{N},0}\left[ W_{k}Z_{k-1}1_{\mathcal{G}}\right] -%
\mathbb{E}^{D_{N},0}\left[ Z_{k-1}1_{\mathcal{G}}\right] \mathbb{E}%
^{r_{k-1},0}\left[ W_{k}1_{\mathcal{G}}\right] \right\vert \leq
C_{0}r_{k-1}^{-\delta }\mathbb{E}^{D_{N},0}\left[ Z_{k-1}1_{\mathcal{G}}%
\right] .
\end{equation*}%
This yields (\ref{wz}).

\bigskip Finally, we prove (\ref{Zgood}) for $k$ using the joint induction
hypothesis for (\ref{Zgood}) up to $k-1$ and for (\ref{ey}) up to $k$, and
apply (\ref{wz}). By (\ref{wz}), and the induction hypothesis for (\ref%
{Zgood}), 
\begin{eqnarray}
&&\log \mathbb{E}^{D_{N},0}\left[ W_{k}Z_{k-1}1_{\mathcal{G}}\right]  \notag
\\
&=&\log \mathbb{E}^{D_{N},0}\left[ Z_{k-1}1_{\mathcal{G}}\right] +\log 
\mathbb{E}^{r_{k-1},0}\left[ W_{k}1_{\mathcal{G}}\right] +R_{k-1}^{\prime } 
\notag \\
&=&\sum_{j=1}^{k}\log \mathbb{E}^{r_{j-1},0}\left[ W_{j}1_{\mathcal{G}}%
\right] +\log \mathbb{E}^{D_{N},0}\left[ \exp \left( t\left(
X_{r_{0,-}}\right) \right) 1_{\mathcal{G}}\right]
+t^{2}F_{k-1}+R_{k-2}+R_{k-1}^{\prime }.  \label{wziterate}
\end{eqnarray}%
We may write%
\begin{equation*}
\log \mathbb{E}^{D_{N},0}\left[ Z_{k}1_{\mathcal{G}}\right] =\log \mathbb{E}%
^{D_{N},0}\left[ W_{k}Z_{k-1}1_{\mathcal{G}}\right] +\log \left[ 1+\frac{%
\mathbb{E}^{D_{N},0}\left[ E_{Y}^{\left( k\right) }1_{\mathcal{G}}\right] }{%
\mathbb{E}^{D_{N},0}\left[ W_{k}Z_{k-1}1_{\mathcal{G}}\right] }\right]
\end{equation*}%
Using the induction hypothesis for (\ref{ey}), (\ref{wziterate}), and the
asymptotics of $\log \mathbb{E}^{r_{j-1},0}\left[ W_{j}\right] $, we
conclude for some absolute constant $C_{8}<\infty $, 
\begin{equation*}
\left\vert \log \left[ 1+\frac{\mathbb{E}^{D_{N},0}\left[ E_{Y}^{\left(
k\right) }1_{\mathcal{G}}\right] }{\mathbb{E}^{D_{N},0}\left[ W_{k}Z_{k-1}1_{%
\mathcal{G}}\right] }\right] \right\vert \leq C_{8}\varepsilon ^{2}\frac{%
\mathbb{E}^{D_{N},0}\left[ Z_{k-2}1_{\mathcal{G}}\right] }{\mathbb{E}%
^{D_{N},0}\left[ W_{k}Z_{k-1}1_{\mathcal{G}}\right] }\leq 2C_{8}\varepsilon
^{2}.
\end{equation*}%
Let $F_{k}=F_{k-1}+\log \left[ 1+\frac{\mathbb{E}^{D_{N},0}\left[
E_{Y}^{\left( k\right) }1_{\mathcal{G}}\right] }{\mathbb{E}^{D_{N},0}\left[
W_{k}Z_{k-1}1_{\mathcal{G}}\right] }\right] $ and $R_{k-1}=R_{k-2}+R_{k-1}^{%
\prime }.$ Combining with (\ref{wziterate}) we conclude 
\begin{equation*}
\log \mathbb{E}^{D_{N},0}\left[ Z_{k}1_{\mathcal{G}}\right]
=\sum_{j=1}^{k}\log \mathbb{E}^{r_{j-1},0}\left[ W_{j}1_{\mathcal{G}}\right]
+\log \mathbb{E}^{D_{N},0}\left[ \exp \left( t\left( X_{r_{0,-}}\right)
\right) 1_{\mathcal{G}}\right] +t^{2}F_{k}+R_{k-1},
\end{equation*}%
with $\left\vert F_{k}\right\vert \leq \max \left\{ C_{1},2C_{8}\right\}
k\varepsilon ^{2}$, $\left\vert R_{k-1}\right\vert \leq
C_{1}\sum_{j=1}^{k-1}r_{j}^{-\delta }$. This finishes the proof of (\ref%
{Zgood}), and also Theorem \ref{ind}.

\subsection{Proof of upper bound}

\label{sec: ub}

In this section we prove the pointwise Gaussian tail bound Theorem \ref{thm:
tail bound}, and as a consequence derive the upper bound of the law of large
numbers Theorem \ref{main}. In fact, we obtain the following tail bound for
the maximum of $\phi (x)$. For the rest of the paper, $g=g\left( V\right) $
denotes the the positive constant that appears in Theorem \ref{mgfCLT} and
Theorem \ref{Gauss}.

\begin{proposition}
\label{prop: upper tail bound}For any $\delta >0$, there is some $C=C\left(
\delta \right) <\infty $, such that 
\begin{equation}
\mathbb{P}\left( \sup_{v\in D_{N}}\phi \left( v\right) \geq \left( 2\sqrt{g}%
+\delta \right) \log N\right) \leq C\left( \delta \right) N^{-\delta /\sqrt{g%
}}.  \label{ub}
\end{equation}
\end{proposition}

We first give the proof of Theorem \ref{thm: tail bound}.

\begin{proof}[Proof of Theorem \protect\ref{thm: tail bound}]
Given $\delta >0$ and $v\in D_{N}$, take $M=M\left( \delta \right) =\left(
1-\delta ^{6}\right) \log \Delta $. Therefore%
\begin{equation*}
\mathbb{P}^{D_{N},0}\left( \phi \left( v\right) >u\right) \leq \mathbb{P}%
^{D_{N},0}\left( X_{r_{M,+}}\left( v\right) >u-\delta \log \Delta \right) +%
\mathbb{P}^{D_{N},0}\left( \phi \left( v\right) -X_{r_{M,+}}\left( v\right)
>\delta \log \Delta \right) .
\end{equation*}%
We apply Theorem \ref{Gauss} to obtain for all bounded $t$,%
\begin{eqnarray*}
\mathbb{P}^{D_{N},0}\left( X_{r_{M,+}}>u-\delta \log \Delta \right) &\leq
&\exp \left( -t\left( u-\delta \log \Delta \right) \right) \mathbb{E}%
^{D_{N},0}\left[ \exp \left( tX_{r_{M,+}}\right) \right] \\
&=&\exp \left( -t\left( u-\delta \log \Delta \right) +\frac{t^{2}}{2}g\left(
1-\delta ^{6}\right) \log \Delta +o\left( \log \Delta \right) \right) .
\end{eqnarray*}%
Minimize the last display over $t$. Since $u\leq C\log \Delta $ the minimum
is achieved at some bounded $t$, thus 
\begin{eqnarray*}
\mathbb{P}^{D_{N},0}\left( X_{r_{M,+}}>u-\delta \log \Delta \right) &\leq
&\exp \left( -\frac{\left( u-\delta \log \Delta \right) ^{2}}{2g\left(
1-\delta ^{6}\right) \log \Delta }+o\left( \log \Delta \right) \right) \\
&\leq &\exp \left( -\frac{\left( u-\delta \log \Delta \right) ^{2}}{2g\log
\Delta }+o\left( \log \Delta \right) \right) .
\end{eqnarray*}%
Apply Lemma \ref{lem: BL tail bound} to obtain%
\begin{equation*}
\mathbb{P}^{D_{N},0}\left( \phi \left( v\right) -X_{r_{M,+}}>\delta \log
\Delta \right) \leq \exp \left( -c_{BL}\frac{\left( \delta \log \Delta
\right) ^{2}}{g\delta ^{6}\log \Delta }\right) =\exp \left( -c_{BL}\frac{%
\log \Delta }{g\delta ^{4}}\right) .
\end{equation*}%
Notice that for $\delta $ small enough, 
\begin{equation*}
2c_{BL}\frac{\log \Delta }{g\delta ^{4}}>\frac{\left( u-\delta \log \Delta
\right) ^{2}}{2g\log \Delta },
\end{equation*}%
so we send $\delta \rightarrow 0$ to conclude the proof.
\end{proof}

Finally we show how Proposition \ref{prop: upper tail bound} follows easily
from Theorem \ref{thm: tail bound}.

\begin{proof}[Proof of Proposition \protect\ref{prop: upper tail bound}]
If we pick $\gamma _{0}$ small enough then for $v\in D_{N}$ such that dist$%
(v,\partial D_{N})\leq N^{\gamma _{0}}$ we have from the Brascamp-Lieb tail
bound, Lemma \ref{lem: BL tail bound}, that 
\begin{equation*}
P\left( \phi (v)\geq 2\sqrt{g}\log N\right) \leq \exp \left( -c_{BL}\frac{%
4g\left( \log N\right) ^{2}}{\gamma _{0}\log N}\right) \leq N^{-2-2\delta /%
\sqrt{g}}.
\end{equation*}%
Then a union bound shows that 
\begin{equation*}
P\left( \max_{v:\text{dist}(v,\partial D_{N})\leq N^{\gamma _{0}}}\phi
(v)\geq 2\sqrt{g}\log N\right) \leq N^{\gamma _{0}-1-2\delta /\sqrt{g}}.
\end{equation*}%
Fix this $\gamma _{0}$ and take any $v\in D_{N}$ such that dist$(v,\partial
D_{N})>N^{\gamma _{0}}$. Given any $\delta >0$, applying Proposition \ref%
{thm: tail bound} with $u=\left( 2\sqrt{g}+\delta \right) \log N$ yields 
\begin{eqnarray*}
P\left( \phi (v)\geq \left( 2\sqrt{g}+\delta \right) \log N\right) &\leq
&\exp \left( -2\frac{\left( \log N\right) ^{2}}{\log \Delta }-\frac{2\delta 
}{\sqrt{g}}\frac{\left( \log N\right) ^{2}}{\log \Delta }+o\left( \log
N\right) \right) \\
&\leq &CN^{-2-2\delta /\sqrt{g}+o\left( 1\right) },
\end{eqnarray*}%
for some $C<\infty $. Therefore 
\begin{equation*}
P\left( \max_{v:\text{dist}(v,\partial D_{N})>N^{\gamma _{0}}}\phi (v)\geq 2%
\sqrt{g}\log N\right) \leq CN^{-2\delta /\sqrt{g}+o\left( 1\right) },
\end{equation*}%
thus completing the proof of (\ref{ub}).
\end{proof}

\section{Proof of the Lower Bound\label{LB}}

In this section we prove the lower bound of the law of large numbers Theorem %
\ref{main}. In fact we prove the following tail bound.

\begin{proposition}
For any $\delta >0$, there is some $C=C\left( \delta \right) <\infty $, such
that 
\begin{equation}
\mathbb{P}^{D_{N},0}\left( \sup_{v\in D_{N}}\phi \left( v\right) \leq \left(
2\sqrt{g}-\delta \right) \log N\right) \leq C\left( \delta \right)
N^{-C\delta ^{-1}}.  \label{lb}
\end{equation}
\end{proposition}

We first prove a weaker form of the lower bound in Section \ref{2nd}, and
then \textquotedblleft bootstrap\textquotedblright\ to obtain the desired
lower bound in Section \ref{boot}. Recall that $\mathbb{P}^{B,f}$ represents
the law of the gradient field in $B\subset \mathbb{Z}^{2}$ with boundary
condition $f$ on $\partial B$.

\subsection{Second moment argument\label{2nd}}

Given $B\subset \mathbb{Z}^{2}$, $x\in B$ and $y\in \partial B$, we recall $%
a_{B}\left( x,y\right) $ is the harmonic measure on $\partial B$ seen from $%
x $. Also recall the harmonic averaged field $X_{r_{j},+}\left( v\right) $
and $X_{r_{j},-}\left( v\right) $ from the beginning of Section \ref{ptwise}%
. Heuristically, the process $\left\{ X_{r_{j},+}\left( v\right) \right\} $
should behave like a random walk with increments of variance $g\log \left(
1+\varepsilon \right) $. We make this heuristic rigorous and show the
following weak lower bound:

\begin{proposition}
\label{prop: weak bound} For all $s>0$, there is $N_{0}=N_{0}\left( s\right) 
$ such that for $N>N_{0}\left( s\right) $%
\begin{equation}
\mathbb{P}^{D_{N},0}\left[ 
\begin{array}{c}
\exists v\in \left[ -0.9N,0.9N\right] ^{2}\mbox{ s.t. } \\ 
\phi (v)-X_{r_{_{0},-}}(v)\geq (1-2s)2\sqrt{g}\log N%
\end{array}%
\right] \geq N^{-22s}.  \label{eq: weak bound}
\end{equation}
\end{proposition}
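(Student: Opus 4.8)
### Proof proposal

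The plan is to run a truncated second moment method for the count of points $v$ at which the mesoscopic harmonic-average walk $\{X_{r_{[iM/K],+}}(v)\}_{i=0}^{K}$ follows a favorable trajectory, exactly paralleling the branching random walk heuristic described in the proof strategy section. Fix $K$ large and let $U_m(v)$ be the increments defined in \eqref{Ym}. Introduce the barrier event
\begin{equation*}
\mathcal{T}(v)=\left\{\textstyle\sum_{m=1}^{i}U_m(v)\le \frac{i}{K}(1-\beta)2\sqrt{g}\log N+A \ \text{ for } i=1,\dots,K-1,\ \text{ and }\ \sum_{m=1}^{K}U_m(v)\ge (1-2\beta)2\sqrt{g}\log N\right\}
\end{equation*}
for a suitable constant $A=A(\beta)$, and set $Z=\sum_{v\in[-0.9N,0.9N]^2\cap(\varepsilon N)\mathbb{Z}^2}\mathbf{1}_{\mathcal{T}(v)}$, where the points $v$ are taken $\varepsilon N$-separated so that the two-point estimate \eqref{2layer} applies with $j=0$ for all pairs, i.e. the walks toward distinct $v$ are essentially independent from scale $N$ downward. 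Note $\phi(v)-X_{r_{0,-}}(v)$ differs from $\sum_m U_m(v)$ only by the small telescoping overlaps $X_{r_{[iM/K],-}}-X_{r_{[iM/K],+}}$, each of variance $O(\varepsilon^4\log N)$, which can be absorbed into the constant $A$ via Brascamp--Lieb and a union bound; so $\mathcal{T}(v)$ implies the event in \eqref{eq: weak bound} up to harmless adjustments of constants.

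First I would compute $\mathbb{E}[Z]$. By \eqref{1layer} the joint moment generating function of $(U_1(v),\dots,U_K(v))$ is Gaussian to leading order, with each $U_m$ of variance $\frac{1}{K}g\log N+o(\log N)$ and the $U_m$ asymptotically independent; hence for large $N$ the probability $\mathbb{P}(\mathcal{T}(v))$ is comparable, up to $N^{o(1)}$, to the corresponding probability for a genuine Gaussian random walk of $K$ steps reaching height $(1-2\beta)2\sqrt g\log N$ while staying under a line of slope $(1-\beta)2\sqrt g\log N /K$. A standard ballot/barrier estimate for such a walk gives probability $N^{-2(1-2\beta)^2/(1-\beta)^2\cdot\frac{1}{2}\cdot\text{(something)}+o(1)}$; being slightly careful (the target height is $1-2\beta$ while the barrier slope corresponds to $1-\beta$, so the walk has room to spare and the barrier costs only a polynomial factor, not an exponential one), one gets $\mathbb{P}(\mathcal{T}(v))=N^{-2(1-2\beta)^2+o(1)}$ up to the polynomial ballot correction, and multiplying by the $\asymp N^2$ choices of $v$ yields $\mathbb{E}[Z]\ge N^{2-2(1-2\beta)^2-o(1)}\ge N^{8\beta-o(1)}$ after expanding $2-2(1-2\beta)^2=8\beta-8\beta^2\ge 4\beta$ for small $\beta$; in any case $\mathbb{E}[Z]$ is a positive power of $N$ that dominates $N^{17\beta}$ for $\beta$ small. (The exact bookkeeping of exponents is routine; what matters is $\mathbb{E}[Z]=N^{c\beta+o(1)}$ with $c>17$, which holds for all small $\beta$ and then for all $\beta$ by monotonicity of the event.)

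Next I would bound $\mathbb{E}[Z^2]=\sum_{v_1,v_2}\mathbb{P}(\mathcal{T}(v_1)\cap\mathcal{T}(v_2))$. For the diagonal-ish pairs with $|v_1-v_2|$ of order $N^{1-j/K}$, the two-point estimate \eqref{2layer} says the two walks are essentially independent after branching scale $j$ and share the first $j$ increments; so $\mathbb{P}(\mathcal{T}(v_1)\cap\mathcal{T}(v_2))\le N^{o(1)}\,\mathbb{P}(\text{common walk stays under the barrier up to step }j)\,\mathbb{P}(\text{step }j\text{ to }K\text{ succeeds})^2$. Because the barrier is kept strictly below the terminal target, the common portion contributes a factor that, when summed over the $\asymp N^{2(1-j/K)}$ pairs at separation $N^{1-j/K}$, is controlled: the key point, familiar from the DGFF case, is that the barrier truncation makes $\sum_{j}(\#\text{pairs at scale }j)\cdot\mathbb{P}(\text{shared prefix})\cdot\mathbb{P}(\text{suffix})^2$ comparable to $\mathbb{E}[Z]^2$ rather than exploding. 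I would verify this by the usual computation: the per-scale contribution is $N^{o(1)}\mathbb{E}[Z]^2\cdot N^{-2j/K}\cdot(\text{gain from shared prefix})$, and the barrier ensures the gain is at most $N^{2j/K+o(1)}$, so each scale contributes $\le N^{o(1)}\mathbb{E}[Z]^2$ and summing over $j=1,\dots,K$ costs only a factor $K\cdot N^{o(1)}$. Hence $\mathbb{E}[Z^2]\le N^{o(1)}\mathbb{E}[Z]^2$, and Paley--Zygmund gives $\mathbb{P}(Z\ge 1)\ge \mathbb{E}[Z]^2/\mathbb{E}[Z^2]\ge N^{-o(1)}\ge N^{-17\beta}$ for $N$ large — in fact with a better exponent, but $N^{-17\beta}$ suffices and leaves room to swallow the $o(1)$ and the adjustments from the overlap terms.

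The main obstacle is the second-moment estimate for pairs at intermediate separation: one must show the shared-prefix contribution is exactly cancelled by the reduced number of such pairs, which requires the barrier truncation to be designed so that a walk constrained below the line of slope $(1-\beta)2\sqrt g\log N/K$ gains at most the "right" amount from conditioning on a high shared prefix. This is precisely the mechanism that fails without truncation (the untruncated second moment explodes), and making it work hinges on (i) having the two-point function \eqref{2layer} with genuinely matching Gaussian variances and no cross term once the walks separate, and (ii) choosing the barrier height parameter $A$ and the slack between $1-2\beta$ and $1-\beta$ large enough that the ballot-problem estimates are uniform in $N$ and in the branching scale $j$. A secondary technical nuisance is that \eqref{1layer} and \eqref{2layer} are moment-generating-function statements, so to extract the barrier probabilities I would need to feed them into a standard tilting/large-deviation argument (exponential Chebyshev in each increment plus a change of measure under which the increments are near-Gaussian with the tilted mean), rather than invoking a Gaussian CLT directly; this is standard but must be done carefully to keep the $o(\log N)$ errors from accumulating over the $K$ steps, which is why $K$ is fixed (large but independent of $N$) throughout.
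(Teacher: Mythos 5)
Your proposal contains an internal contradiction that, on either reading, leaves a genuine gap. You define $Z$ as a sum over $v\in[-0.9N,0.9N]^2\cap(\varepsilon N)\mathbb{Z}^2$, i.e.\ over macroscopically separated points, precisely so that all pairs are "independent from scale $N$ downward." But there are only $O(\varepsilon^{-2})$ such points, a constant independent of $N$, while each single-point probability is $N^{-2(1-2\beta)^2+o(1)}=N^{-2+O(\beta)}$; hence $\mathbb{E}[Z]\to 0$ and Paley--Zygmund gives nothing. Your later computation "multiplying by the $\asymp N^2$ choices of $v$" and your discussion of pairs at separation $N^{1-j/K}$ silently switch to summing over \emph{all} lattice points, contradicting the definition of $Z$. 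The entire difficulty of the second moment method here is that one must sum over all $\asymp N^2$ points and then control the correlated pairs at every intermediate separation $N^{1-j/K}$; declaring the points separated at the outset removes the difficulty only by destroying the first moment.

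If one repairs this by summing over all points, a second problem appears: your barrier truncation is not compatible with the tools you propose to use. The two-point estimate \eqref{2layer} is an exponential-moment bound, so to exploit it for $\mathbb{P}(\mathcal{T}(v_1)\cap\mathcal{T}(v_2))$ via Chernoff you need the joint event to \emph{lower}-bound the tilted linear functional $\sum_m\lambda U_m(v_1)+\sum_{m>j}\lambda U_m(v_2)$. A barrier constrains the shared prefix $\sum_{m\le j}U_m(v_1)$ only from \emph{above}, so no single positive tilt works; you would have to decompose over the prefix height and run a grid of tilts, and the claimed conclusion $\mathbb{E}[Z^2]\le N^{o(1)}\mathbb{E}[Z]^2$ is in any case unreachable from \eqref{1layer}--\eqref{2layer}, whose $e^{o(\log N)}$ errors already cost $N^{o(1)}$ per application (and the interval/ballot probabilities you invoke cost polynomial factors in $\beta$). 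The paper's proof sidesteps both issues by using a \emph{two-sided window} $U_m(v)\in[\frac{1}{K}(1\mp\beta)2\sqrt g\log N]$ for every increment: this pins the shared prefix from below, so $\mathbb{P}[J(v_1)\cap J(v_2)]$ follows from a one-line exponential Chebyshev with \eqref{2layer} (Lemma \ref{2pt}), and it accepts the resulting loss $\mathbb{E}[Z^2]\le N^{17\beta}\mathbb{E}[Z]^2$, which is exactly why the weak bound is only $N^{-17\beta}$ and a separate bootstrapping step is needed. You should adopt the window truncation (or fully carry out the prefix-decomposition argument) and sum over all points of $[-0.9N,0.9N]^2$.
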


\bigskip In fact, this probability tends to one as $N\rightarrow \infty $.
This will be proved later by bootstrapp{ing} the weaker bound stated in
Proposition \ref{prop: weak bound}. The proof of Proposition \ref{prop: weak
bound} is based on a second moment method studying the truncated count of
the increment of the harmonic averaged process.

It suffices to prove Proposition \ref{prop: weak bound} for small $s$. Given 
$v\in \left[ -0.9N,0.9N\right] ^{2}$, take $c=s^{3}$ and $M=M\left(
s^{3}\right) =\left( 1-s^{3}\right) \log N/\log \left( 1+\varepsilon \right) 
$, and define $r_{k}$ and $r_{k,\pm }$ as in (\ref{scale}). Denote by $\left[
m\right] $ the integer part of $m$. Then we have 
\begin{eqnarray*}
&&\mathbb{P}^{D_{N},0}\left[ 
\begin{array}{c}
\exists v\in \left[ -0.9N,0.9N\right] ^{2}\mbox{ s.t. } \\ 
\phi (v)-X_{r_{_{0},-}}(v)\geq (1-2s)2\sqrt{g}\log N%
\end{array}%
\right]  \\
&\geq &\mathbb{P}^{D_{N},0}\left[ 
\begin{array}{c}
\exists v\in \left[ -0.9N,0.9N\right] ^{2}\mbox{ s.t. } \\ 
X_{r_{\left[ M\right] },+}\left( v\right) -X_{r_{_{0},-}}(v)\geq (1-\frac{3}{%
2}s)2\sqrt{g}\log N%
\end{array}%
\right]  \\
&&-\mathbb{P}^{D_{N},0}\left[ 
\begin{array}{c}
\exists v\in \left[ -0.9N,0.9N\right] ^{2}\mbox{ s.t. } \\ 
\phi \left( v\right) -X_{r_{\left[ M\right] },+}(v)\leq - \frac{s}{2}2\sqrt{g}%
\log N%
\end{array}%
\right] .
\end{eqnarray*}%
The last term above can be bounded using the Brascamp-Lieb inequality.
Indeed, 
\begin{eqnarray}
&&\mathbb{P}^{D_{N},0}\left[ 
\begin{array}{c}
\exists v\in \left[ -0.9N,0.9N\right] ^{2}\mbox{ s.t. } \\ 
\phi \left( v\right) -X_{r_{\left[ M\right] },+}(v)\leq -\frac{s}{2}2\sqrt{g}%
\log N%
\end{array}%
\right]   \notag \\
&\leq &\sum_{v\in \left[ -0.9N,0.9N\right] ^{2}}\mathbb{P}^{D_{N},0}\left[
\phi \left( v\right) -X_{r_{\left[ M\right] },+}(v)\geq \frac{s}{2}2\sqrt{g}%
\log N\right]   \notag \\
&\leq &N^{2}\exp \left( -c_{BL}\frac{s^{2}g\left( \log N\right) ^{2}}{\text{%
Var}_{DGFF}^{D_{N},0}\left( \phi \left( v\right) -X_{r_{\left[ M\right]
},+}(v)\right) }\right) \notag \\
&\leq &N^{2}\exp \left( -c_{BL}\frac{s^{2}g\left( \log N\right) ^{2}}{%
gs^{3}\log N}\right) =N^{2-c^{\prime }s^{-1}},  \label{last}
\end{eqnarray}%
for some $c^{\prime }>0$. For small $s$ this is much smaller than $N^{-22s}$%
. Therefore it suffices to study $X_{r_{\left[ M\right]
},+}(v)-X_{r_{_{0},-}}(v)$.

For fixed integer $K\geq 2$ (which will be taken sufficiently large in the
end), split \textquotedblleft time\textquotedblright\ into $K_{1}:=\left[
\left( 1-s^{3}\right) K\right] +1$ intervals of size $1/K$ and consider the
increments over these intervals 
\begin{equation}
\begin{array}{lll}
U_{m}(v) & = & X_{r_{\left[ \frac{mM}{K_{1}}\right] ,+}}(v)-X_{r_{\left[ 
\frac{\left( m-1\right) M}{K_{1}}\right] ,-}}(v)%
\end{array}%
,\text{ for }m=1,...,K_{1}.  \label{Um}
\end{equation}%
Roughly speaking, when $v$ is in the bulk of $D_{N}$, $\left\{ U_{m}\right\}
_{m=1}^{K_{1}}$ are the differences between the harmonic average at scale $%
N^{1-m/K}$ and the scale $N^{1-\left( m-1\right) /K}$. Consider the events 
\begin{equation*}
J_{m}(v;s)=\left\{ U_{m}\left( v\right) \in \left[ \frac{1}{K}(1-s)2\sqrt{g}%
\log N,\frac{1}{K}(1+s)2\sqrt{g}\log N\right] \right\} .
\end{equation*}%
and 
\begin{equation*}
J(v;s)=\bigcap_{m=1,\ldots ,K_{1}}J_{m}(v;s).
\end{equation*}%
Define the counting random variable 
\begin{equation*}
\mathcal{N}_{K_{1}}(s)=\sum_{v\in \left[ -0.9N,0.9N\right] ^{2}}1_{J(v;s)}.
\end{equation*}%
Note that if $\mathcal{N}_{K_{1}}(s)\geq 1$ then there exists a $v\in \left[
-0.9N,0.9N\right] ^{2}$ such that%
\begin{equation*}
\sum_{m=1}^{K_{1}}U_{m}\left( v\right) \geq (1-s)\left( 1-s^{3}\right) 2%
\sqrt{g}\log N\geq \left( 1-\frac{5}{4}s\right) 2\sqrt{g}\log N.
\end{equation*}%
Furthermore, since%
\begin{equation*}
X_{r_{\left[ M\right] },+}(v)-X_{r_{_{0},-}}(v)=\sum_{m=1}^{K_{1}}U_{m}%
\left( v\right) +\sum_{m=1}^{K_{1}}\left( X_{\left[ mM/K_{1}\right]
,-}\left( v\right) -X_{\left[ mM/K_{1}\right] ,+}\left( v\right) \right) ,
\end{equation*}%
and by direct computation 
\begin{equation*}
\text{Var}_{DGFF}^{D_{N},0}\left[ \sum_{m=1}^{K_{1}}\left( X_{\left[ mM/K_{1}%
\right] ,-}\left( v\right) -X_{\left[ mM/K_{1}\right] ,+}\left( v\right)
\right) \right] =O\left( K_{1}\right) ,
\end{equation*}%
the Brascamp-Lieb tail bound Lemma \ref{lem: BL tail bound} implies there
exists some $c\left( s,K_{1}\right) >0$, such that 
\begin{equation}
\mathbb{P}^{D_{N},0}\left( \sum_{m=1}^{K_{1}}\left( X_{\left[ mM/K_{1}\right]
,-}\left( v\right) -X_{\left[ mM/K_{1}\right] ,+}\left( v\right) \right) >2%
\sqrt{g}\frac{s}{4}\log N\right) \leq e^{-c\left( s,K_{1}\right) \left( \log
N\right) ^{2}}.  \label{xerr}
\end{equation}%
Combining (\ref{last}) and (\ref{xerr}), Proposition \ref{prop: weak bound}
will follow from 
\begin{equation}
\mathbb{P}^{D_{N},0}\left[ \mathcal{N}_{K_{1}}\left( s\right) \geq 1\right]
\geq N^{-22s}.  \label{pz}
\end{equation}%
We will prove the following.

\begin{lemma}
\label{lem: fm sm} For all $s>0$ and $K\geq 2/s$ we have 
\begin{equation}
\mathbb{E}^{D_{N},0}[\mathcal{N}_{K_{1}}(s)^{2}]\leq N^{22s}\mathbb{E}%
^{D_{N},0}[\mathcal{N}_{K_{1}}(s)]^{2}.  \label{eq: fm sm}
\end{equation}
\end{lemma}

With additional work, the term $N^{22s}$ could be replaced $(1+o(1))$, but
for our purposes \eqref{eq: fm sm} is enough. Note that \eqref{eq: fm sm} is
true only because $\mathcal{N}_{K_{1}}(s)$ is a \textit{truncated} count of
high points.

By the Paley-Zygmund inequality, Lemma \ref{lem: fm sm} implies (\ref{pz}),
and therefore yields Proposition \ref{prop: weak bound}.

Lemma \ref{lem: fm sm} follows from the following estimates:

\begin{lemma}
\label{lem: fm} For all fixed $s>0$ and $K\geq 2$ we have 
\begin{equation*}
\mathbb{E}^{D_{N},0}[\mathcal{N}_{K_{1}}(s)]\geq cN^{-5s}.
\end{equation*}
\end{lemma}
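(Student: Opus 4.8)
\textbf{Proof proposal for Lemma \ref{lem: fm}.}

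The plan is to compute a lower bound for $\mathbb{E}[Z(\beta)]=\sum_{v\in[-0.9N,0.9N]^2}\mathbb{P}(J(v;\beta))$ by bounding each individual probability $\mathbb{P}(J(v;\beta))$ from below, uniformly in $v$, and then multiplying by the number of lattice points, which is of order $N^2$. So the heart of the matter is a single-site estimate: for $v$ in the bulk,
\begin{equation*}
\mathbb{P}(J(v;\beta))=\mathbb{P}\left(U_m(v)\in\left[\tfrac{1}{K}(1-\beta)2\sqrt{g}\log N,\ \tfrac{1}{K}(1+\beta)2\sqrt{g}\log N\right]\text{ for all }m=1,\ldots,K\right)\geq cN^{-5\beta-2}.
\end{equation*}

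First I would use the two-layer-free moment generating function bound from Theorem \ref{expcircle}, equation (\ref{1layer}), which tells us that the increments $\{U_m(v)\}_{m=1}^K$ are jointly approximately Gaussian: for any bounded $\{\lambda_m\}$,
\begin{equation*}
\mathbb{E}^{D_N,0}\left[\exp\left(\sum_{m=1}^K\lambda_m U_m(v)\right)\right]=\exp\left(\tfrac12\sum_{m=1}^K\lambda_m^2\tfrac{1}{K}g\log N+o(\log N)\right).
\end{equation*}
This is exactly the moment generating function of a vector of independent Gaussians $G_m$ with $\mathrm{Var}(G_m)=\tfrac{1}{K}g\log N$, up to the $e^{o(\log N)}=N^{o(1)}$ multiplicative error. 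The natural route is then a change of measure (tilting): let $\mathbb{Q}$ be the measure obtained by tilting $\mathbb{P}^{D_N,0}$ by $\exp(\sum_m \lambda_m^* U_m(v))/(\text{normalization})$ with $\lambda_m^*=\tfrac{1}{g\log N}\cdot\tfrac{K}{1}\cdot m_0$ chosen so that under $\mathbb{Q}$ each $U_m(v)$ has mean approximately $m_0:=\tfrac{1}{K}2\sqrt{g}\log N$ (the center of the target interval), i.e. $\lambda_m^*=2\sqrt{g}/g=2/\sqrt{g}$, a constant, hence admissible in Theorem \ref{expcircle}. Under $\mathbb{Q}$, by differentiating (\ref{1layer}) in the $\lambda$'s one checks that the covariance structure of $\{U_m(v)\}$ is unchanged to leading order, so each $U_m(v)$ has $\mathbb{Q}$-variance $\tfrac{1}{K}g\log N(1+o(1))$ and the $U_m$'s are asymptotically uncorrelated; a second-moment (Chebyshev) estimate under $\mathbb{Q}$ then gives $\mathbb{Q}(J(v;\beta))\geq 1-\sum_m \mathbb{Q}(|U_m(v)-m_0|>\tfrac{\beta}{K}2\sqrt{g}\log N)\geq 1-O(K/(\beta^2\log N))\to1$. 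Undoing the tilt,
\begin{equation*}
\mathbb{P}(J(v;\beta))=\mathbb{E}_{\mathbb{Q}}\left[e^{-\sum_m\lambda_m^*U_m(v)}\,\mathbf{1}_{J(v;\beta)}\right]\cdot\mathbb{E}^{D_N,0}\left[e^{\sum_m\lambda_m^*U_m(v)}\right],
\end{equation*}
and on $J(v;\beta)$ we have $\sum_m\lambda_m^*U_m(v)\leq \tfrac{2}{\sqrt g}(1+\beta)2\sqrt{g}\log N=4(1+\beta)\log N$ while the normalizing MGF is $\exp(\tfrac12 K(\lambda_m^*)^2\tfrac{1}{K}g\log N+o(\log N))=\exp(2\log N+o(\log N))=N^{2+o(1)}$. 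This would give $\mathbb{P}(J(v;\beta))\geq N^{-4(1+\beta)+2+o(1)}\cdot(1-o(1))=N^{-2-4\beta+o(1)}$, hence $\mathbb{E}[Z(\beta)]\geq N^2\cdot N^{-2-4\beta+o(1)}=N^{-4\beta+o(1)}\geq cN^{-5\beta}$ for $N$ large, as claimed. (A cleaner bookkeeping just tilts with $\lambda_m^*$ matching the \emph{lower} endpoint, or works directly with the Gaussian comparison: since (\ref{1layer}) shows $\{U_m(v)\}$ has the same MGF as an explicit Gaussian vector up to $N^{o(1)}$, the probability of landing in a product of intervals is within $N^{o(1)}$ of the corresponding Gaussian probability, and the latter is a completely explicit computation giving $N^{-2-4\beta+o(1)}$ via the standard large-deviation rate $\exp(-\tfrac{(2\sqrt{g}\log N)^2}{2g\log N})=N^{-2}$ together with the entropy gain from the $\beta$-window of width $\asymp\log N$ at each of the $K$ scales.)

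The main obstacle I anticipate is making the "same MGF implies comparable interval probabilities up to $N^{o(1)}$" step rigorous, since Theorem \ref{expcircle} only controls exponential moments with \emph{bounded} $\lambda_m$ and with an additive $o(\log N)$ error inside the exponent — one cannot directly invert a moment generating function to get a local density. The clean fix is the tilting argument above: choose the constant tilt $\lambda_m^*=2/\sqrt{g}$ (bounded, so (\ref{1layer}) applies), verify via the Brascamp--Lieb bound on $\mathbb{E}_{\mathbb{Q}}[U_m(v)^2]$ (equivalently, differentiating (\ref{1layer}) twice) that the tilted variances and cross-covariances are $\tfrac{1}{K}g\log N(1+o(1))$ and $o(\log N)$ respectively, and then Chebyshev gives the lower bound on $\mathbb{Q}(J(v;\beta))$ without ever needing a local limit theorem. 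The uniformity over $v\in[-0.9N,0.9N]^2$ is automatic because Theorem \ref{expcircle} holds uniformly over such $v$, and the error terms there depend only on $K,\varepsilon,C$. Finally, one should double-check that the threshold in $J_m$ is \emph{below} the one in Proposition \ref{prop: weak bound}'s target (it is: $(1-\beta)$ vs $(1-\tfrac32\beta)$ after summing), so this $Z(\beta)$ is the right counting variable.
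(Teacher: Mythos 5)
Your proposal is correct and follows essentially the same route as the paper: the paper also reduces to the uniform one-point bound $\mathbb{P}^{D_{N},0}[J(v;\beta )]\geq cN^{-2-5\beta }$ (its Lemma \ref{1pt}), proves it by tilting with the constant $\lambda =2/\sqrt{g}$, shows $Q[J(v;\beta )]\rightarrow 1$ using Theorem \ref{expcircle}, and undoes the tilt exactly as you do. The one place where the paper's execution is cleaner than your sketch is the concentration step under $Q$: rather than differentiating the asymptotic identity \eqref{1layer} (whose $o(\log N)$ error is not controlled under differentiation), the paper computes $Q[\exp (t(U_{j}-\tfrac{1}{K}2\sqrt{g}\log N))]$ directly as a ratio of two moment generating functions at the tilts $\lambda $ and $\lambda +t e_{j}$, both covered by Theorem \ref{expcircle}, and then applies exponential Chebyshev with $t=\pm \beta /\sqrt{g}$ — this pins down the tilted mean and fluctuations simultaneously without any differentiation or separate Brascamp--Lieb variance bound.
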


\begin{lemma}
\label{lem: sm} For all fixed $s>0$ and $K\geq 2$ we have 
\begin{equation*}
\mathbb{E}^{D_{N},0}[\mathcal{N}_{K_{1}}(s)^{2}]\leq N^{\frac{2}{K}+11s}.
\end{equation*}%
%
%
%
%
%
%
%
%
%
%
%
%
%
%
%
%
%
%
%
%
%
%
%
%
%
%
%
%
%
%
\end{lemma}

The proof of these lemmas use estimates for the joint distribution of $%
\left\{ U_{m}\right\} _{m=1}^{K_{1}}$, proved in Section \ref{fdd} below.
Lemma \ref{lem: fm} is immediate from taking union bound from the following
result.

\begin{lemma}
\label{1pt}For all fixed $s >0$ and $K\geq 2$ we have that 
\begin{equation*}
\mathbb{P}^{D_{N},0}[J(v;s )]\geq cN^{-2-5s },
\end{equation*}%
uniformly over $v\in \left[ -0.9N,0.9N\right] ^{2}$.
\end{lemma}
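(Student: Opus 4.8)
The plan is to estimate $\mathbb{P}^{D_N,0}[J(v;\beta)]$ from below by recognizing that $J(v;\beta)$ is precisely the event that all $K$ increments $U_m(v)$ lie in a prescribed window of width $\frac{2\beta}{K}\sqrt{g}\log N$ centered at $\frac{1}{K}2\sqrt{g}\log N$. The key input is the joint near-Gaussianity of $(U_1(v),\dots,U_K(v))$ supplied by \eqref{1layer} of Theorem \ref{expcircle}: the moment generating function $\mathbb{E}^{D_N,0}[\exp(\sum_m \lambda_m U_m(v))]$ is, up to $e^{o(\log N)}$, that of independent centered Gaussians each with variance $\frac{1}{K}g\log N$. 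So the first step is to convert this MGF asymptotic into a lower bound on the probability that each $U_m$ lands in its window. The cleanest route is a change-of-measure (exponential tilting) argument: tilt each coordinate by $\lambda_m = \frac{1}{g\log N}\cdot \frac{1}{K}2\sqrt{g}\log N \cdot K = \frac{2}{\sqrt g}$ — more precisely choose the tilt that centers the tilted mean of $U_m$ at the middle of the target interval — and then under the tilted measure the event $J(v;\beta)$ has probability bounded below by a constant (again using a second-moment/Chebyshev estimate under the tilt, which only needs the MGF at nearby $\lambda$'s, hence is covered by \eqref{1layer} applied with perturbed $\lambda_m$'s).

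More concretely, I would argue as follows. Let $a = 2\sqrt g \log N$ so the target for $\sum_m U_m$ is about $a$ and each window is $[\frac{1-\beta}{K}a, \frac{1+\beta}{K}a]$. Define the tilted expectation $\widetilde{\mathbb{E}}[\,\cdot\,] = \mathbb{E}^{D_N,0}[\,\cdot\, e^{\lambda\sum_m U_m}]/\mathbb{E}^{D_N,0}[e^{\lambda \sum_m U_m}]$ with $\lambda = a/(g\log N) = 2/\sqrt g$, which is bounded, so Theorem \ref{expcircle} applies. Then
\begin{equation*}
\mathbb{P}^{D_N,0}[J(v;\beta)] = \mathbb{E}^{D_N,0}[e^{\lambda\sum_m U_m}] \cdot \widetilde{\mathbb{E}}\big[e^{-\lambda \sum_m U_m} 1_{J(v;\beta)}\big].
\end{equation*}
On $J(v;\beta)$ we have $\sum_m U_m \le (1+\beta)a$, so $e^{-\lambda\sum_m U_m} \ge e^{-\lambda(1+\beta)a} = e^{-2(1+\beta)a/\sqrt g} = N^{-4(1+\beta)}$ (using $a = 2\sqrt g\log N$ gives $\lambda a = 4\log N$). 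Meanwhile $\mathbb{E}^{D_N,0}[e^{\lambda\sum_m U_m}] = \exp(\tfrac12 \lambda^2 g\log N + o(\log N)) = \exp(2\log N + o(\log N)) = N^{2+o(1)}$. So it remains to show $\widetilde{\mathbb{E}}[1_{J(v;\beta)}] \ge c > 0$ (or at least $\ge N^{-o(1)}$, which would give an extra harmless factor). Under the tilt, each $U_m$ has mean asymptotic to $\lambda \cdot \frac1K g\log N = \frac{1}{K}\cdot 2\sqrt g\log N = \frac aK$, which is exactly the center of the $m$-th window, and variance $\asymp \frac1K g\log N$; by Chebyshev (computing the tilted second moment via \eqref{1layer} with two of the $\lambda_m$'s perturbed) the probability that $U_m$ stays within $\frac{\beta a}{K}$ of its tilted mean is $\ge 1 - \frac{C}{\beta^2 \log N} \to 1$, and a union bound over the $K$ fixed coordinates gives $\widetilde{\mathbb{P}}[J(v;\beta)] \ge 1/2$ for $N$ large. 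Combining, $\mathbb{P}^{D_N,0}[J(v;\beta)] \ge \tfrac12 N^{2+o(1)} N^{-4(1+\beta)} = N^{-2-4\beta+o(1)} \ge cN^{-2-5\beta}$ for $N$ large.

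The main obstacle — and the place where care is needed — is matching constants in the exponent: the budget is $N^{-2-5\beta}$, the tilt forces a factor $N^{-4(1+\beta)}$ against the partition function $N^{2+o(1)}$, which naively gives $N^{-2-4\beta}$, so the $o(1)$ loss from Theorem \ref{expcircle} must be absorbed into the extra $\beta$ of slack; this is fine for $N$ large since $\beta$ is fixed. One should also double check that the windows are set up so that \emph{both} the lower cutoff (controlled by $e^{-\lambda\sum U_m}$ needing $\sum U_m$ not too small, which is automatic on $J$) and the correct centering of the tilted means hold — i.e. that $\lambda = 2/\sqrt g$ is indeed the tilt that lands each $U_m$ at $\frac aK$, which follows because the tilted mean of $U_m$ is $\partial_{\lambda_m}$ of the log-MGF at $\lambda_m = \lambda$, equal to $\lambda \cdot \frac1K g\log N$. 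A slightly cleaner alternative, avoiding the tilt entirely, is to note that by \eqref{1layer} the vector $(U_1(v),\dots,U_K(v))$ satisfies a moderate-deviations/CLT-type lower bound directly — but the exponential-tilting packaging above is the most robust way to extract the small-probability lower bound purely from the MGF asymptotics, without needing a full local CLT. I would carry out: (1) set up the tilt and the identity above; (2) estimate the partition function and the $e^{-\lambda\sum U_m}$ factor on $J$; (3) prove $\widetilde{\mathbb P}[J]\ge 1/2$ via tilted mean/variance computations from Theorem \ref{expcircle}; (4) collect the exponents and verify they fit inside $N^{-2-5\beta}$.
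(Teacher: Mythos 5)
Your proposal matches the paper's proof essentially step for step: the same tilt $\lambda=2/\sqrt{g}$, the same lower bound $e^{-\lambda\sum_m U_m}\geq N^{-4(1+\beta)}$ on $J(v;\beta)$, the same use of \eqref{1layer} to evaluate the normalizing factor as $N^{2+o(1)}$, and the same final step of showing the tilted probability of $J(v;\beta)$ tends to one, with the extra $\beta$ of slack absorbing the $o(\log N)$ errors. The only cosmetic difference is in that last step: the paper controls the tilted fluctuations of each $U_m$ by exponential Chebyshev applied to the tilted moment generating function (obtained from Theorem \ref{expcircle} with one $\lambda_m$ perturbed), which is the safer route than your plain Chebyshev, since a tilted second moment cannot rigorously be extracted from MGF asymptotics that are only accurate up to a multiplicative $e^{o(\log N)}$.
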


\begin{proof}
Letting $\frac{dQ}{d\mathbb{P}^{D_{N},0}}=\frac{\exp (\lambda
\sum_{m=1}^{K_{1}}U_{m}(v))}{\mathbb{E}^{D_{N},0}[\exp (\lambda
\sum_{m=1}^{K_{1}}U_{m}(v))]}$ we have 
\begin{equation*}
\begin{array}{lll}
\mathbb{P}^{D_{N},0}[J(v,s)] & = & Q[J(v,s);e^{-\lambda
\sum_{m=1}^{K_{1}}U_{m}(v)}]\mathbb{E}^{D_{N},0}[\exp (\lambda
\sum_{m=1}^{K_{1}}U_{m}(v))] \\ 
& \geq & Q[J(v,s)]e^{-\lambda (1+s)\left( 1-s^{3}\right) 2\sqrt{g}\log N}%
\mathbb{E}^{D_{N},0}[\exp (\lambda \sum_{m=1}^{K_{1}}U_{m}(v))]%
\end{array}%
\end{equation*}%
By Theorem \ref{expcircle}, for all $\lambda \leq 2/\sqrt{g}$, 
\begin{equation}
\mathbb{E}^{D_{N},0}[\exp (\sum_{m=1}^{K_{1}}\lambda U_{m}(v))]=\exp (\frac{1%
}{2}\sum_{m=1}^{K_{1}}\lambda ^{2}\frac{1}{K}g\log N+o\left( \log N\right) ),
\label{eq: one point exp moment}
\end{equation}%
Therefore 
\begin{equation*}
\mathbb{P}^{D_{N},0}[J(v,s)]\geq Q[J(v,s)]e^{\frac{1}{2}\lambda ^{2}\left(
1-s^{3}\right) g\log N-\lambda (1+s)\left( 1-s^{3}\right) 2\sqrt{g}\log
N+o\left( \log N\right) }.
\end{equation*}%
Setting $\lambda =2/\sqrt{g}$ we find that 
\begin{equation*}
\mathbb{P}^{D_{N},0}[J(v,s)]\geq Q[J(v,s)]e^{-2\log N-5s\log N}.
\end{equation*}%
It thus only remains to show that $Q[J(v)]\geq c$. Under $Q$ we have for
each $j$ that 
\begin{eqnarray}
&&Q[\exp (t(U_{j}(v)-\frac{1}{K}2\sqrt{g}\log N))]  \label{eq: Q exp mom} \\
&=&\frac{\mathbb{E}^{D_{N},0}[\exp (\sum_{m=1}^{K_{1}}(\lambda
+1_{\{m=j\}}t)U_{m}(v)]}{\mathbb{E}^{D_{N},0}[\exp
(\sum_{m=1}^{K_{1}}\lambda U_{m}(v)]}\exp (-2t\frac{1}{K}\sqrt{g}\log N). 
\notag
\end{eqnarray}%
Thus applying Theorem \ref{expcircle} (with $\max \lambda _{i}=2/\sqrt{g}+1$%
) we have that \eqref{eq: Q exp
mom} equals 
\begin{equation*}
\begin{array}{l}
\frac{\exp \left( \frac{1}{2}\lambda ^{2}\left( 1-s^{3}\right) g\log
N+\lambda t\frac{1}{K}g\log N+\frac{1}{2}t^{2}\frac{1}{K}g\log N+o\left(
\log N\right) \right) }{\exp \left( \frac{1}{2}\lambda ^{2}\left(
1-s^{3}\right) g\log N+o\left( \log N\right) \right) }\exp (-2t\frac{1}{K}%
\sqrt{g}\log N) \\ 
=\exp \left( \frac{1}{2}t^{2}\frac{1}{K}g\log N+o\left( \log N\right)
\right) ,%
\end{array}%
\end{equation*}%
where the last equality follows because $\lambda =2/\sqrt{g}$. Using the
exponential Chebyshev inequality with $t=\pm s/\sqrt{g}$ therefore shows
that 
\begin{equation*}
Q[|U_{j}-\frac{1}{K}\sqrt{g}\log N|\geq s\frac{1}{K}\sqrt{g}\log N]\leq \exp
(-c\frac{s^{2}}{K}\log N),
\end{equation*}%
for some $c>0$. Thus $Q[J(v,s)]\geq 1-K\exp (-c\frac{s^{2}}{K}\log
N)\rightarrow 1$, as $N\rightarrow \infty $ for all $K$ and $s$.
\end{proof}

Lemma \ref{lem: sm} will follow from the following.

\begin{lemma}
\label{2pt} For all fixed $s>0$ and $K\geq 1$ we have if $N^{1-\frac{j}{K}%
}\leq |v_{1}-v_{2}|\leq N^{1-\frac{j-1}{K}}$ for some $j\in \left\{ 1,\ldots
,K_{1}\right\} ,$ then 
\begin{equation*}
\mathbb{P}^{D_{N},0}[J(v_{1},s)\cap J(v_{2},s)]\leq \exp (-2\frac{2K_{1}-j}{K%
}\log N+5s\frac{2K_{1}-j}{K}\log N).
\end{equation*}
\end{lemma}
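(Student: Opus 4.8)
The plan is to imitate the one-point estimate Lemma \ref{1pt}, but since we now want an \emph{upper} bound we may drop the change of measure used there and instead apply a plain exponential Chebyshev inequality, fed with the two-point moment bound \eqref{2layer} of Theorem \ref{expcircle}. First I would discard all the constraints defining $J(v_1,\beta)\cap J(v_2,\beta)$ except two. Summing $U_m(v_1)\ge\frac1K(1-\beta)2\sqrt g\log N$ over $m=1,\dots,K$ gives $\sum_{m=1}^K U_m(v_1)\ge(1-\beta)2\sqrt g\log N$, and summing $U_m(v_2)\ge\frac1K(1-\beta)2\sqrt g\log N$ over $m=j+1,\dots,K$ only gives $\sum_{m=j+1}^K U_m(v_2)\ge\frac{K-j}{K}(1-\beta)2\sqrt g\log N$; denoting these two events by $A_1$ and $A_2$,
\[
\mathbb P^{D_{N},0}\big[J(v_1,\beta)\cap J(v_2,\beta)\big]\le\mathbb P^{D_{N},0}\big[A_1\cap A_2\big].
\]
There are two reasons for discarding $U_m(v_2)$ with $m\le j$: by the coalescence structure underlying Theorem \ref{expcircle}, under the separation hypothesis $N^{1-j/K}\le|v_1-v_2|\le N^{1-(j-1)/K}$ those increments are already accounted for, at leading order, by $A_1$, so keeping them cannot improve the exponent; and, more to the point, \eqref{2layer} is stated for precisely the index ranges $m=1,\dots,K$ around $v_1$ and $m=j+1,\dots,K$ around $v_2$, so $A_1\cap A_2$ is exactly the combination of sums it controls.

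Next I would apply the exponential Chebyshev inequality with parameters $\lambda_1,\lambda_2\in(0,2/\sqrt g]$:
\begin{align*}
\mathbb P^{D_{N},0}\big[A_1\cap A_2\big]&\le\exp\!\Big(-\lambda_1(1-\beta)2\sqrt g\log N-\lambda_2\tfrac{K-j}{K}(1-\beta)2\sqrt g\log N\Big)\\
&\quad\times\mathbb E^{D_{N},0}\Big[\exp\Big(\lambda_1\textstyle\sum_{m=1}^K U_m(v_1)+\lambda_2\sum_{m=j+1}^K U_m(v_2)\Big)\Big].
\end{align*}
Feeding in \eqref{2layer} with $\lambda_{m,1}=\lambda_1$ for $m=1,\dots,K$ and $\lambda_{m,2}=\lambda_2$ for $m=j+1,\dots,K$ — admissible since both are bounded by $2/\sqrt g$ — the expectation equals $\exp\big(\frac{\lambda_1^2}{2}g\log N+\frac{K-j}{K}\frac{\lambda_2^2}{2}g\log N+o(\log N)\big)$, so the right-hand side becomes $\exp\big(q(\lambda_1)\log N+\frac{K-j}{K}q(\lambda_2)\log N+o(\log N)\big)$ with $q(\lambda)=\frac{\lambda^2}{2}g-2\lambda(1-\beta)\sqrt g$. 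Choosing $\lambda_1=\lambda_2=2(1-\beta)/\sqrt g$ (which lies in the admissible range) minimises each copy of $q$ at $-2(1-\beta)^2$, leaving the exponent $-2(1-\beta)^2\frac{2K-j}{K}\log N+o(\log N)$. Bounding $(1-\beta)^2\ge1-2\beta$ and $\frac{2K-j}{K}\le2$ and absorbing the lower-order term (harmless, since Lemmas \ref{lem: sm} and \ref{lem: fm sm} are only needed for large $N$) then yields $\mathbb P^{D_{N},0}[J(v_1,\beta)\cap J(v_2,\beta)]\le\exp\big(-2\frac{2K-j}{K}\log N+5\beta\log N\big)$.

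I expect the only genuinely delicate point to be the bookkeeping in the first step: identifying exactly which constraints in $J(v_1,\beta)\cap J(v_2,\beta)$ to keep so that the surviving pair of sums is precisely the pair whose joint Laplace transform is supplied by \eqref{2layer}, and checking that the $\beta$-dependent corrections land within the $5\beta\log N$ slack. Everything after that is the same one-variable Laplace-transform optimisation already performed in Lemma \ref{1pt}, now carried out separately in $\lambda_1$ and $\lambda_2$ — which decouple because \eqref{2layer} carries no cross term $\lambda_1\lambda_2\,\Cov\big(\sum_m U_m(v_1),\sum_{m>j}U_m(v_2)\big)$, reflecting the asymptotic independence of the two harmonic-average trees beyond their coalescence scale.
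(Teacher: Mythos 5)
Your argument is correct and is essentially the paper's proof: the paper likewise drops the constraints on $v_2$ for $m\le j$ (forming its event $J'$), applies an exponential tilt by $\lambda\sum_{m=1}^{K}U_m(v_1)+\lambda\sum_{m=j+1}^{K}U_m(v_2)$ — which reduces to exactly the exponential Chebyshev bound you use — evaluates the joint moment generating function via \eqref{2layer}, and sets $\lambda=2/\sqrt g$ (your optimal $\lambda_1=\lambda_2=2(1-\beta)/\sqrt g$ gives the same leading order). The one caveat, which you share with the paper, is that the optimization actually yields a correction term $4\beta\frac{2K-j}{K}\log N\le 8\beta\log N$ rather than $5\beta\log N$; this looseness is immaterial for the downstream second-moment bounds, which only require an $O(\beta)\log N$ error.
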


\begin{proof}
Note that $B_{N^{1-\frac{j}{K}}}(v_{i})$ for $i=1,2$ are disjoint, but $%
B_{N^{1-\frac{j-1}{K}}}(v_{i})$ are not. Thus, roughly speaking, the
increments $U_{j+1}(v_{i})$ for $i=1,2$ depend on disjoint regions but $%
U_{j}(v_{i})$ do not. Because of this we expect $U_{m}(v_{i}),i=1,2$ to be
correlated for $m=1,\ldots ,j$ (and essentially perfectly correlated if $%
m\leq j-1$), but essentially independent for $m=j+1,\ldots ,K_{1}$. With
this in mind we in fact bound 
\begin{equation*}
\mathbb{P}^{D_{N},0}[J^{\prime }],
\end{equation*}%
where 
\begin{equation*}
J^{\prime }=\cap _{m=1,\ldots ,K_1}J_{m}(v_{1},s)\bigcap \cap _{m=j+1,\ldots
,K_{1}}J_{m}(v_{2},s),
\end{equation*}%
i.e., we drop the condition on $v_{2}$ for $m=1,\ldots ,j$.

Letting $\frac{dQ}{d\mathbb{P}^{D_{N},0}}=\frac{\exp
(\sum_{m=1}^{K_{1}}\lambda U_{m}(v_{1})+\lambda
\sum_{m=j+1}^{K_{1}}U_{m}(v_{2})))}{\mathbb{E}^{D_{N},0}[\exp
(\sum_{m=1}^{K_{1}}\lambda U_{m}(v_{1})+\lambda
\sum_{m=j+1}^{K_{1}}U_{m}(v_{2}))]}$ we have 
\begin{equation*}
\begin{array}{lll}
&  & \mathbb{P}^{D_{N},0}[J^{\prime }] \\ 
& \leq & 
\begin{array}{l}
Q[J^{\prime };\exp \left( -\sum_{m=1}^{K_{1}}\lambda U_{m}(v_{1})-\lambda
\sum_{m=j+1}^{K_{1}}U_{m}(v_{2}))\right) ] \\ 
\mathbb{E}^{D_{N},0}[\exp (\sum_{m=1}^{K_{1}}\lambda U_{m}(v_{1})+\lambda
\sum_{m=j+1}^{K_{1}}U_{m}(v_{2})))]%
\end{array}
\\ 
& \leq & 
\begin{array}{l}
\exp \left( -\lambda \frac{2K_{1}-j}{K}\left( 1-s\right) 2\sqrt{g}\log
N\right) \\ 
\mathbb{E}^{D_{N},0}[\exp (\sum_{m=1}^{K_{1}}\lambda U_{m}(v_{1})+\lambda
\sum_{m=j+1}^{K_{1}}U_{m}(v_{2})))]%
\end{array}%
\end{array}%
\end{equation*}%
By Theorem \ref{expcircle}, for all $\lambda \leq 2/\sqrt{g}$, 
\begin{equation}
\begin{array}{l}
\mathbb{E}^{D_{N},0}[\exp (\sum_{m=1}^{K_{1}}\lambda
U_{m}(v_{1})+\sum_{m=j+1}^{K_{1}}\lambda U_{m}(v_{2})))] \\ 
=\exp (\frac{1}{2}\sum_{m=1}^{K_{1}}\lambda ^{2}\frac{1}{K}g\log N+\frac{1}{2%
}\sum_{m=j+1}^{K_{1}}\lambda ^{2}\frac{1}{K}g\log N+o\left( \log N\right) ).%
\end{array}
\label{eq: two point exp moment}
\end{equation}%
Thus in fact $\mathbb{P}^{D_{N},0}[J^{\prime }]$ is at most 
\begin{equation*}
\exp \left( \frac{1}{2}\lambda ^{2}\frac{2K_{1}-j}{K}g\log N-\lambda \frac{%
2K_{1}-j}{K}(1-s)2\sqrt{g}\log N+o\left( \log N\right) \right) .
\end{equation*}%
Setting $\lambda =2/\sqrt{g}$ we find that 
\begin{equation*}
\mathbb{P}^{D_{N},0}[J^{\prime }]\leq \exp \left( -2\frac{2K_{1}-j}{K}\log
N+5s\frac{2K_{1}-j}{K}\log N\right) .  \label{J'}
\end{equation*}
\end{proof}

We can now prove the second moment estimate Lemma \ref{lem: sm}.

\begin{proof}[Proof of Lemma \protect\ref{lem: sm}]
We write the second moment as 
\begin{equation*}
\mathbb{E}^{D_{N},0}[\mathcal{N}_{K_{1}}^{2}]\leq \sum_{v_{1},v_{2}\in \left[
-0.9N,0.9N\right] ^{2}}\mathbb{P}^{D_{N},0}[J(v_{1},s)\cap J(v_{2},s)].
\end{equation*}%
Splitting the sum according to the distance $|v_{1}-v_{2}|$ we get that, 
\begin{multline*}
\mathbb{E}^{D_{N},0}[\mathcal{N}_{K_{1}}^{2}]=\sum_{j=1}^{K_{1}}%
\sum_{N^{1-j/K}\leq |v_{1}-v_{2}|\leq N^{1-(j-1)/K}}\mathbb{P}%
^{D_{N},0}[J(v_{1},s)\cap J(v_{2},s)] \\ +\sum_{\left\vert
v_{1}-v_{2}\right\vert \leq N^{s^{3}}}\mathbb{P}^{D_{N},0}[J(v_{1},s)\cap
J(v_{2},s)].
\end{multline*}%
The first summation gives the main contribution. Now using Lemma \ref{2pt}
and the fact that there are at most $N^{2}\times N^{2-2(j-1)/K}$ points at
distance less than $N^{1-(j-1)/K}$ we obtain an upper bound of 
\begin{equation*}
\begin{array}{l}
\sum_{j=1}^{K_{1}}N^{4-2(j-1)/K}\times N^{-2\frac{2K_{1}-j}{K}+5s\frac{%
2K_{1}-j}{K}}+N^{2}N^{-2+5s} \\ 
=N^{4}\sum_{j=1}^{K_{1}}N^{-4\left( 1-s^{3}\right) +2/K}N^{10s\left(
1-s^{3}\right) }+\sum_{j=1}^{K_{1}}N^{5s} \\ 
\leq \left[ K_{1}+1\right] N^{\frac{2}{K}+10s},%
\end{array}%
\end{equation*}%
which for $N$ large enough is at most $N^{\frac{2}{K}+11s}$.
\end{proof}

\subsection{Finite dimensional distribution of the harmonic averages\label%
{fdd}}

We now state and prove a result concerns the joint distribution of the
increment of the harmonic averages at mesoscopic scales. The next theorem
shows approximate joint Gaussianity of $\left\{ U_{m}\right\} _{m=1}^{K_{1}}$%
, defined in (\ref{Um}).

\begin{theorem}
\label{expcircle}For all bounded sequence $\left\{ \lambda _{m}\right\}
_{m=1,...,K_{1}}$ such that $\max_{m}\lambda _{m}\leq C$ and $v\in \left[
-0.9N,0.9N\right] ^{2}$, we have for all $K$ sufficiently large, 
\begin{equation}
\mathbb{E}^{D_{N},0}[\exp (\sum_{m=1}^{K_{1}}\lambda _{m}U_{m}(v))]=\exp (%
\frac{1}{2}\sum_{m=1}^{K_{1}}\lambda _{m}^{2}\frac{1}{K}g\log N+o\left( \log
N\right) ),  \label{1layer}
\end{equation}%
where the $o\left( \log N\right) $ term depends on $K,\varepsilon ,C$, and
the constant $\delta $ from Theorem \ref{decouple}. Also, for $%
v_{1},v_{2}\in \left[ -N/2,N/2\right] ^{2}$ such that for some $j\in \left\{
1,\ldots ,K_{1}\right\} $, $N^{1-\frac{j}{K}}\leq |v_{1}-v_{2}|\leq N^{1-%
\frac{j-1}{K}}$, and for bounded sequences $\left\{ \lambda _{m,i}\right\}
_{i=1,2}$ such that $\max_{m,i}\lambda _{m,i}\leq C$, we have for all $K$
sufficiently large,%
\begin{equation}
\begin{array}{l}
\mathbb{E}^{D_{N},0}[\exp (\sum_{m=1}^{K_{1}}\lambda
_{m,1}U_{m}(v_{1})+\sum_{m=j+1}^{K_{1}}\lambda _{m,2}U_{m}(v_{2})))] \\ 
=\exp (\frac{1}{2}\sum_{m=1}^{K_{1}}\lambda _{m,1}^{2}\frac{1}{K}g\log N+%
\frac{1}{2}\sum_{m=j+1}^{K_{1}}\lambda _{m,2}^{2}\frac{1}{K}g\log N+o\left(
\log N\right) ).%
\end{array}
\label{2layer}
\end{equation}
\end{theorem}

\begin{proof}
We first prove (\ref{1layer}). Recall that 
\begin{eqnarray*}
\mathcal{G} &=&\left\{ \phi :\max_{v\in D_{N}}\left\vert \phi \left(
v\right) \right\vert <\left( \log N\right) ^{2}\right\} \\
&=&\left\{ \phi :\max_{v\in D_{N}}\left\vert \phi \left( v\right)
\right\vert <c\left( s\right) \left( \log r_{M}\right) ^{2}\right\}
\end{eqnarray*}%
Using the Brascamp-Lieb inequality and Lemma \ref{bad}, it is easy to bound%
\begin{equation*}
\mathbb{E}^{D_{N},0}[\exp (\sum_{m=1}^{K_{1}}\lambda _{m}U_{m}(v))1_{%
\mathcal{G}^{c}}]=o_{N}\left( 1\right) ,
\end{equation*}%
therefore we only need to compute $\mathbb{E}^{D_{N},0}[\exp
(\sum_{m=1}^{K_{1}}\lambda _{m}U_{m}(v))1_{\mathcal{G}}]$.

Indeed, denote $r_{\left[ mM/K_{1}\right] }$ as $\tilde{r}_{m}$, and $%
\mathcal{F}_{m}=\sigma \left\{ \phi \left( v\right) :v\in D_{N}\setminus B_{%
\tilde{r}_{m}}\left( v\right) \right\} $, by the Markov property we have 
\begin{eqnarray*}
&&\mathbb{E}^{D_{N},0}[\exp (\sum_{m=1}^{K_{1}}\lambda _{m}U_{m}(v))1_{%
\mathcal{G}}] \\
&=&\mathbb{E}^{D_{N},0}\left[ \exp \left( \sum_{m=1}^{K_{1}-1}\lambda
_{m}U_{m}\right) 1_{\mathcal{G}}\mathbb{E}\left[ e^{\lambda
_{K_{1}}U_{K_{1}}}1_{\mathcal{G}}|\mathcal{F}_{K_{1}-1}\right] \right] .
\end{eqnarray*}%
By Lemma \ref{jasonerr}, there exist $C_{1}<\infty $ and $\delta >0$, such
that 
\begin{eqnarray*}
\left\vert \mathbb{E}\left[ e^{\lambda _{K_{1}}U_{K_{1}}}1_{\mathcal{G}}|%
\mathcal{F}_{K_{1}-1}\right] -\mathbb{E}^{\tilde{r}_{K_{1}-1},0}\left[
e^{\lambda _{K_{1}}U_{K_{1}}}1_{\mathcal{G}}\right] \right\vert &\leq &%
\tilde{r}_{K_{1}-1}^{-\delta }\exp \left( c_{1}\text{Var}_{DGFF}^{\tilde{r}%
_{K_{1}-1},0}\left( \lambda _{K_{1}}U_{K_{1}}\right) \right) \\
&\leq &\tilde{r}_{K_{1}-1}^{-\delta }\exp \left( C^{2}C_{1}\frac{1}{K}\log
N\right) ,
\end{eqnarray*}%
where $C=\max_{m}\lambda _{m}$. Take $K$ large enough such that 
\begin{equation*}
C^{2}C_{1}\frac{1}{K}\leq \frac{1}{2}\delta s^{3},
\end{equation*}%
we thus have 
\begin{equation*}
\left\vert \mathbb{E}\left[ e^{\lambda _{K_{1}}U_{K_{1}}}1_{\mathcal{G}}|%
\mathcal{F}_{K_{1}-1}\right] -\mathbb{E}^{\tilde{r}_{K_{1}-1},0}\left[
e^{\lambda _{K_{1}}U_{K_{1}}}1_{\mathcal{G}}\right] \right\vert \leq \tilde{r%
}_{K_{1}-1}^{-\delta /2}\leq \tilde{r}_{K_{1}-1}^{-\delta /2}\mathbb{E}^{%
\tilde{r}_{K_{1}-1},0}\left[ e^{\lambda _{K_{1}}U_{K_{1}}}1_{\mathcal{G}}%
\right] .
\end{equation*}

Therefore%
\begin{eqnarray*}
&&\mathbb{E}^{D_{N},0}[\exp (\sum_{m=1}^{K_{1}}\lambda _{m}U_{m}(v))1_{%
\mathcal{G}}] \\
&=&\left( 1+O\left( \tilde{r}_{K_{1}-1}^{-\delta /2}\right) \right) \mathbb{E%
}^{\tilde{r}_{K_{1}-1},0}\left[ e^{\lambda _{K_{1}}U_{K_{1}}}1_{\mathcal{G}}%
\right] \mathbb{E}^{D_{N},0}\left[ \exp \left( \sum_{m=1}^{K_{1}-1}\lambda
_{m}U_{m}\right) 1_{\mathcal{G}}\right] .
\end{eqnarray*}%
Keep iterating then yields%
\begin{equation*}
\mathbb{E}^{D_{N},0}[\exp (\sum_{m=1}^{K_{1}}\lambda _{m}U_{m}(v))1_{%
\mathcal{G}}]=\prod_{m=1}^{K_{1}}\left( 1+O\left( \tilde{r}_{m-1}^{-\delta
/2}\right) \right) \mathbb{E}^{\tilde{r}_{m-1},0}\left[ e^{\lambda
_{m}U_{m}}1_{\mathcal{G}}\right] .
\end{equation*}

By Theorem \ref{Gauss} (and Remark \ref{Gauss2}), there exists $g=g\left(
V\right) >0$, such that 
\begin{equation}
\mathbb{E}^{\tilde{r}_{m-1},0}\left[ e^{\lambda _{m}U_{m}}\right] =\exp
\left( \frac{\lambda _{m}^{2}}{2}\frac{g}{K}\log N+o\left( \log N\right)
\right) ,  \label{mgfu}
\end{equation}%
and by Lemma \ref{bad} and the Brascamp-Lieb inequality,%
\begin{equation*}
\mathbb{E}^{\tilde{r}_{m-1},0}\left[ e^{\lambda _{m}U_{m}}1_{\mathcal{G}^{c}}%
\right] =o_{N}\left( 1\right) \text{.}
\end{equation*}%
Since $\sum_{m=1}^{K_{1}}\tilde{r}_{m-1}^{-\delta /2}<\infty $, this
finishes the proof of (\ref{1layer}).

The proof of (\ref{2layer}) is very similar to that of (\ref{1layer}). We
define for $i=1,2$, $\mathcal{F}_{m,i}=\sigma \left\{ \phi \left( v\right)
:v\in D_{N}\setminus B_{\tilde{r}_{m}}\left( v_{i}\right) \right\} $. Then
by the same argument, 
\begin{eqnarray*}
&&\mathbb{E}^{D_{N},0}[\exp (\sum_{m=1}^{K_{1}}\lambda
_{m,1}U_{m}(v_{1})+\sum_{m=j+1}^{K_{1}}\lambda _{m,2}U_{m}(v_{2}))1_{%
\mathcal{G}}] \\
&=&\mathbb{E}^{D_{N},0}[\exp (\sum_{m=1}^{K_{1}-1}\lambda
_{m,1}U_{m}(v_{1})+\sum_{m=j+1}^{K_{1}}\lambda _{m,2}U_{m}(v_{2}))1_{%
\mathcal{G}}\mathbb{E}\left[ \exp \left( \lambda
_{K_{1},1}U_{K_{1}}(v_{1})\right) 1_{\mathcal{G}}|\mathcal{F}_{K_{1}-1,1}%
\right] ] \\
&=&\left( 1+O\left( \tilde{r}_{K_{1}-1}^{-\delta /2}\right) \right) \mathbb{E%
}^{\tilde{r}_{K_{1}-1},0}\left[ \exp \left( \lambda
_{K_{1},1}U_{K_{1}}(v_{1})\right) 1_{\mathcal{G}}\right] \\
&&\mathbb{E}^{D_{N},0}[\exp (\sum_{m=1}^{K_{1}-1}\lambda
_{m,1}U_{m}(v_{1})+\sum_{m=j+1}^{K_{1}}\lambda _{m,2}U_{m}(v_{2}))1_{%
\mathcal{G}}].
\end{eqnarray*}%
Then conditioned on $\mathcal{F}_{K_{1}-1,2}$, apply the Markov property and
Lemma \ref{jasonerr}, we can write the above display as 
\begin{eqnarray*}
&&\left( 1+O\left( \tilde{r}_{K_{1}-1}^{-\delta /2}\right) \right) \mathbb{E}%
^{\tilde{r}_{K_{1}-1},0}\left[ \exp \left( \lambda
_{K_{1},1}U_{K_{1}}(v_{1})\right) 1_{\mathcal{G}}\right] \mathbb{E}^{\tilde{r%
}_{K_{1}-1},0}\left[ \exp \left( \lambda _{K_{1},2}U_{K_{1}}(v_{2})\right)
1_{\mathcal{G}}\right] \\
&&\times \mathbb{E}^{D_{N},0}[\exp (\sum_{m=1}^{K_{1}-1}\lambda
_{m,1}U_{m}(v_{1})+\sum_{m=j+1}^{K_{1}-1}\lambda _{m,2}U_{m}(v_{2}))1_{%
\mathcal{G}}].
\end{eqnarray*}%
Keep iterating, we obtain 
\begin{eqnarray*}
&&\mathbb{E}^{D_{N},0}[\exp (\sum_{m=1}^{K_{1}}\lambda
_{m,1}U_{m}(v_{1})+\sum_{m=j+1}^{K_{1}}\lambda _{m,2}U_{m}(v_{2}))1_{%
\mathcal{G}}] \\
&=&\prod_{m=j+1}^{K_{1}}\left( 1+O\left( \tilde{r}_{m-1}^{-\delta /2}\right)
\right) \mathbb{E}^{\tilde{r}_{m-1},0}\left[ \exp \left( \lambda
_{m,1}U_{m}(v_{1})\right) 1_{\mathcal{G}}\right] \mathbb{E}^{\tilde{r}%
_{m-1},0}\left[ \exp \left( \lambda _{m,2}U_{m}(v_{2})\right) 1_{\mathcal{G}}%
\right] \\
&&\times \prod_{m=1}^{j}\left( 1+O\left( \tilde{r}_{m-1}^{-\delta /2}\right)
\right) \mathbb{E}^{\tilde{r}_{m-1},0}\left[ \exp \left( \lambda
_{m,1}U_{m}(v_{1})\right) 1_{\mathcal{G}}\right] .
\end{eqnarray*}%
Applying (\ref{mgfu}) we conclude the proof of (\ref{2layer}).
\end{proof}

\subsection{Bootstrapping\label{boot}}

We now use Proposition \ref{prop: weak bound} to prove the desired lower
bound (\ref{lb}). Proposition \ref{prop: weak bound} shows that the field
reaches $\left( 1-2s \right) 2\sqrt{g}\log N$ with at least polynomially
small probability. We will apply Theorem \ref{decouple} to see that the
field in different regions of $\left[ -N,N\right] ^{2}$ are essentially
decoupled. Therefore applying Proposition \ref{prop: weak bound} in each
region one can show with high probability, there is some $v\in \left[ -N,N%
\right] ^{2}$ such that $\phi (v)-X_{r_{_{0},-}}(v)\geq (1-2s )2\sqrt{g}\log
N$.

To carry out this argument, tile $\left[ -N,N\right] ^{2}$ by disjoint boxes 
$D_{1},D_{2},\ldots ,D_{m}$ of side-length $N^{1-\eta }$, where $m\asymp
N^{\eta }$, and $\eta $ is a small number that will be chosen later. Let $%
\mathcal{B}$ be the union of all the $\partial D_{i}$.

Consider the good event 
\begin{equation*}
\mathcal{G}=\{\max_{v\in \left[ -N,N\right] ^{2}}|\phi (v)|\leq \left( \log
N\right) ^{2}\}.
\end{equation*}%
By Lemma \ref{bad}, we have $\mathbb{P}^{D_{N},0}[\mathcal{G}%
^{c}]\preccurlyeq e^{-c\left( \log N\right) ^{3}}$, as $N\rightarrow \infty $%
.

On the event $\mathcal{G}$, for $i=1,...,m$, let $\bar{D}_{i}$ be the box
concentric to $D_{i}$, but with side length $\frac{1}{2}N^{1-\eta }$. Let $R=%
\frac{1}{2}N^{1-\eta }$. We further define 
\begin{equation*}
\tilde{\mathcal{N}_{K}}_{i}=\left\{ \forall v\in \bar{D}_{i}:\phi \left(
v\right) -X_{R,-}(v,\phi )<(1-2s)\left( 1-\eta \right) 2\sqrt{g}\log
N\right\} .
\end{equation*}%
Now 
\begin{equation*}
\mathbb{P}^{D_{N},0}[\tilde{\mathcal{N}_{K}}_{i},i=1,...,m;\mathcal{G}]=%
\mathbb{P}^{D_{N},0}[\mathbb{P}[\tilde{\mathcal{N}_{K}}_{i},i=1,...,m|\phi
(x),x\in \mathcal{B}];\mathcal{G}].
\end{equation*}%
Using the Gibbs property of the measure (\ref{GL}), we have the conditional
decoupling 
\begin{equation*}
\mathbb{P}^{D_{N},0}[\tilde{\mathcal{N}_{K}}_{i},i=1,...,m|\phi (x),x\in 
\mathcal{B}]=\prod_{i=1}^{m}\mathbb{P}^{D_{i},\phi 1_{\partial D_{i}}}[%
\tilde{\mathcal{N}_{K}}_{i}|\phi (x),x\in \partial D_{i}].  \label{eq: bla}
\end{equation*}%
Consider for each $i$ the law $\mathbb{P}^{D_{i},\phi 1_{\partial D_{i}}}$.
Then on $\mathcal{G}$ we can apply Lemma \ref{decouple} and \ref{average}$\ $%
to construct a coupling $Q^{i}$ of a field $\phi $ with law $\mathbb{P}%
^{D_{i},\phi 1_{\partial D_{i}}}$ and a field $\phi ^{0,i}$ with law $%
\mathbb{P}^{D_{i},0}$ such that 
\begin{equation*}
Q^{i}[\forall v\in \bar{D}_{i}:\phi \left( v\right) -X_{R,-}(v,\phi )=\phi
^{0,i}\left( v\right) -X_{R,-}^{{}}(v,\phi ^{0,i})]\geq 1-N^{-\delta \left(
1-\eta \right) },
\end{equation*}%
where the constant $\delta >0$ is from Theorem \ref{decouple}.

Thus 
\begin{equation*}
\begin{array}{l}
\mathbb{P}^{D_{N},0}\left( 
\begin{array}{c}
\forall v\in \left[ -0.9N,0.9N\right] ^{2}: \\ 
\phi \left( v\right) -X_{R,-}(v,\phi )<(1-2s)\left( 1-\eta \right) 2\sqrt{g}%
\log N;\mathcal{G}%
\end{array}%
\right)  \\ 
\leq \prod_{i=1}^{m}\left( \mathbb{P}^{D_{i},0}[\tilde{\mathcal{N}_{K}}%
_{i}]+N^{-\delta \left( 1-\eta \right) }\right)  \\ 
\leq \prod_{i=1}^{m}\left( 1-\left( N^{1-\eta }\right) ^{-21s}+N^{-\delta
\left( 1-\eta \right) }\right) ,%
\end{array}%
\end{equation*}%
where we apply Proposition \ref{prop: weak bound} to obtain the last
inequality. Now let $s$ and $\eta $ be small enough, depending on $\delta $,
such that 
\begin{equation}
21s<\delta \text{ and }\eta >21s/\left( 1+21s\right) .  \label{etaeps}
\end{equation}%
Thus we have 
\begin{equation}
\mathbb{P}^{D_{N},0}\left( 
\begin{array}{c}
\forall v\in \left[ -0.9N,0.9N\right] ^{2}: \\ 
\phi \left( v\right) -X_{R,-}(v,\phi )<(1-2s)\left( 1-\eta \right) 2\sqrt{g}%
\log N;\mathcal{G}%
\end{array}%
\right) \preccurlyeq e^{-N^{\varepsilon _{1}}},  \label{scale1}
\end{equation}%
for some $\varepsilon _{1}>0$.

In view of (\ref{etaeps}), we can take $\eta =21s$. Then, on the complement
of the event (\ref{scale1}), there exists $v_{1}\in \left[ -0.9N,0.9N\right]
^{2}$ such that 
\begin{equation*}
\phi \left( v_{1}\right) -X_{R,-}(v_{1},\phi )\geq (1-19s)2\sqrt{g}\log N.
\end{equation*}%
Notice that (for $g_{0}=2/\pi $) 
\begin{equation*}
\text{Var}_{DGFF}^{D_{N},0}\left[ X_{R,-}(v_{1},\phi )\right] =g_{0}\eta
\log N+o\left( \log N\right) =21sg_{0}\log N+o\left( \log N\right) .
\end{equation*}%
By Lemma \ref{lem: BL tail bound}, there exists $c_{BL}>0$, such that%
\begin{equation}
\mathbb{P}^{D_{N},0}\left[ X_{R,-}(v_{1},\phi )>s^{1/3}\log N\right] \leq
\exp \left( -c_{BL}\frac{s^{2/3}\left( \log N\right) ^{2}}{s\log N}\right)
=N^{-c_{BL}s^{-1/3}}.  \label{first}
\end{equation}%
Combining (\ref{scale1}) and (\ref{first}), we see that%
\begin{equation*}
\mathbb{P}^{D_{N},0}\left[ \max_{v\in \left[ -0.9N,0.9N\right] ^{2}}\phi
\left( v\right) <(1-2s^{1/3})2\sqrt{g}\log N\right] \leq
N^{-c_{BL}s^{-1/3}}+e^{-N^{\varepsilon _{1}}}.
\end{equation*}%
And we conclude (\ref{lb}).

\subsection{High points\label{hp}}

We now sketch the proof of Theorem \ref{thm: high points}. The proof follows
from the same argument as the proof of Theorem \ref{main}, for completeness
we sketch the idea below.

It suffices to prove that for any $s>0$, 
\begin{eqnarray}
\mathbb{P}^{D_{N},0}\left( \left\vert \mathcal{H}_{N}\left( \eta \right)
\right\vert >N^{2\left( 1-\eta ^{2}\right) +s}\right)  &=&o_{N}\left(
1\right) ,\text{ and}  \label{upp} \\
\mathbb{P}^{D_{N},0}\left( \left\vert \mathcal{H}_{N}\left( \eta \right)
\right\vert <N^{2\left( 1-\eta ^{2}\right) -s}\right)  &=&o_{N}\left(
1\right) .  \label{low}
\end{eqnarray}%
Since 
\begin{eqnarray*}
\mathbb{P}^{D_{N},0}\left( \left\vert \mathcal{H}_{N}\left( \eta \right)
\right\vert >N^{2\left( 1-\eta ^{2}\right) +s}\right)  &\leq &N^{-2\left(
1-\eta ^{2}\right) -s}\mathbb{E}\left[ \left\vert \mathcal{H}_{N}\left( \eta
\right) \right\vert \right]  \\
&\leq &N^{-2\left( 1-\eta ^{2}\right) -s}\sum_{v\in D_{N}}\mathbb{P}%
^{D_{N},0}\left( \phi \left( v\right) \geq 2\sqrt{g}\eta \log N\right) ,
\end{eqnarray*}%
the upper bound (\ref{upp}) follows directly from applying Theorem \ref{thm:
tail bound} with $u=2\sqrt{g}\eta \log N$.

We now focus on the lower bound (\ref{low}). Recall the definition of $U_{m}$
in (\ref{Um}). For $\eta \in \left( 0,1\right) $, having in mind that we aim
to count the points $\left\{ v\in D_{N}:\phi \left( v\right) >2\sqrt{g}\eta
\log N\right\} $, we look at the following truncated event such that the
increments $U_{m}$ are slightly higher than $2\sqrt{g}\frac{\eta }{K}\log N$%
: 
\begin{equation*}
J_{m}(v;\eta ;s)=\left\{ U_{m}\left( v\right) \in \left[ (1+s)2\sqrt{g}\frac{%
\eta }{K}\log N,(1+2s)2\sqrt{g}\frac{\eta }{K}\log N\right] \right\} .
\end{equation*}%
and for $K_{1}:=\left[ \left( 1-s^{3}\right) K\right] +1$, 
\begin{equation*}
J(v;\eta ;s)=\bigcap_{m=1,\ldots ,K_{1}}J_{m}(v;\eta ;s).
\end{equation*}%
Also define the counting random variable 
\begin{equation*}
\mathcal{N}_{K_{1}}(\eta ,s)=\sum_{v\in \left[ -0.9N,0.9N\right]
^{2}}1_{J(v;\eta ;s)}.
\end{equation*}%
By the same Brascamp-Lieb bounds as (\ref{last}) and (\ref{xerr}), to study
the dimension of $\mathcal{H}_{N}\left( \eta \right) $, it suffices to study 
$\left\{ v:J(v;\eta ;s)\text{ occurs}\right\} $. Indeed, the same first
moment computation as Lemma \ref{lem: fm} and Lemma \ref{1pt} (but instead
using the change of measure $\frac{dQ}{d\mathbb{P}^{D_{N},0}}=\frac{\exp
(\lambda \eta \sum_{m=1}^{K_{1}}U_{m}(v))}{\mathbb{E}^{D_{N},0}[\exp
(\lambda \eta \sum_{m=1}^{K_{1}}U_{m}(v))]}$) yields%
\begin{equation*}
\mathbb{E}\left[ \mathcal{N}_{K_{1}}(\eta ,s)\right] \geq N^{2\left( 1-\eta
^{2}\right) -8s\eta ^{2}},
\end{equation*}%
and the same second moment computation as Lemma \ref{lem: sm} and Lemma \ref%
{2pt} yields 
\begin{equation*}
\mathbb{E}\left[ \mathcal{N}_{K_{1}}^{2}(\eta ,s)\right] \leq N^{4\left(
1-\eta ^{2}\right) -5s\eta ^{2}}.
\end{equation*}%
Therefore 
\begin{equation*}
\mathbb{E}\left[ \mathcal{N}_{K_{1}}^{2}(\eta ,s)\right] \leq N^{11s\eta
^{2}}\mathbb{E}\left[ \mathcal{N}_{K_{1}}(\eta ,s)\right] ^{2}.
\end{equation*}

Applying the Payley-Zygmund inequality then yields%
\begin{eqnarray*}
&&\mathbb{P}^{D_{N},0}\left( \left\vert \left\{ v:J(v;\eta ;s)\text{ occurs}%
\right\} \right\vert <\frac{1}{2}N^{2\left( 1-\eta ^{2}\right) -s}\right)  \\
&\leq &1-\mathbb{P}^{D_{N},0}\left( \mathcal{N}_{K_{1}}(\eta ,s)>\frac{1}{2}%
\mathbb{E}\left[ \mathcal{N}_{K_{1}}(\eta ,s)\right] \right)  \\
&\leq &1-cN^{-11s\eta ^{2}}.
\end{eqnarray*}%
But to complete the proof of (\ref{low}) we want $\mathbb{P}^{D_{N},0}\left( 
\mathcal{N}_{K_{1}}(\eta ,s)>\frac{1}{2}\mathbb{E}\left[ \mathcal{N}%
_{K_{1}}(\eta ,s)\right] \right) $ to be close to $1$. This can be proved by
carrying out the same bootstrapping in Section \ref{boot}, obtaining the
high probability by creating a large number ($N^{\gamma }$, where $\gamma
=\gamma \left( s,\delta \right) $, and $\delta $ is the constant from
Theorem \ref{decouple}) of essentially independent trials with success
probability $N^{-11s\eta ^{2}}$.

\bigskip

\paragraph{\textbf{Acknowledgments:}}

We thank Ron Peled for very helpful discussions and helping us generalize
the results in an earlier version of this paper; Ofer Zeitouni for helpful
discussions; Thomas Spencer for discussions on gradient field models and the
arguments in \cite{CS}; and Jason Miller for useful commuications. The work
of W.W. is supported in part by NSF grant DMS-1507019. The work was started
while both authors are at Courant Institute, NYU. Part of the work is done
when the last author was visiting Weizmann Institute, Tel-Aviv University
and NYU Shanghai, and we thank these institutes for their hospitality.

\bibliographystyle{plain}
\bibliography{GLmax}

\end{document}